\documentclass[12pt]{amsart}
\usepackage[utf8]{inputenc}
\usepackage[margin=2.5cm]{geometry}
\usepackage{graphicx}
\usepackage{amsmath}
\usepackage{amsfonts}
\usepackage{amssymb}
\usepackage{amsthm}
\usepackage{tikz-cd}
\usepackage{mathtools}
\usepackage{hyperref}

\theoremstyle{plain}
\newtheorem{theorem}{Theorem}[section]
\newtheorem{proposition}[theorem]{Proposition}
\newtheorem{lemma}[theorem]{Lemma}
\newtheorem{corollary}[theorem]{Corollary}

\theoremstyle{definition}
\newtheorem{definition}[theorem]{Definition}
\newtheorem{notation}[theorem]{Notation}
\newtheorem{question}[theorem]{Question}
\newtheorem{example}[theorem]{Example}
\newtheorem{remark}[theorem]{Remark}
\newtheorem{claim}{Claim}
\newtheorem*{IH}{Inductive hypothesis}

\makeatletter
    \@addtoreset{claim}{theorem}
    \@addtoreset{claim}{proposition}
    \@addtoreset{claim}{lemma}
    \@addtoreset{claim}{corollary}
\makeatother

\numberwithin{equation}{section}

\def\g{\mathfrak{g}}
\def\h{\mathfrak{h}}

\def\L{\mathcal{L}}
\def\T{\mathcal{T}}
\def\O{\mathcal{O}}
\def\Q{\mathcal{Q}}

\def\NN{\mathbb{N}}
\def\ZZ{\mathbb{Z}}

\def\Aa{\mathbb{A}}
\def\PP{\mathbb{P}}
\def\WW{\mathbb{W}}
\def\kk{\Bbbk}

\DeclareMathOperator{\Der}{Der}

\DeclareMathOperator{\spn}{span}
\DeclareMathOperator{\ord}{ord}
\DeclareMathOperator{\gr}{gr}
\DeclareMathOperator{\End}{End}
\DeclareMathOperator{\Aut}{Aut}
\DeclareMathOperator{\im}{Im}
\DeclareMathOperator{\ad}{ad}
\DeclareMathOperator{\Ver}{Ver}

\def\del{\partial}
\def\nonzero{\setminus \{0\}}
\def\ideal{\unlhd}
\newcommand{\diff}[1]{\frac{d}{d#1}}

\newcommand\restr[2]{{\left.\kern-\nulldelimiterspace #1 \vphantom{\big|} \right|_{#2}}}

\newcommand{\etalchar}[1]{$^{#1}$}

\title{Lie subalgebras of vector fields on curves}

\author{Lucas Buzaglo}
\author{Colin Ingalls}

\address[Buzaglo]{Department of Mathematics, UC San Diego, La Jolla, CA 92093-0112, USA}
\email{\href{mailto:lbuzaglo@ucsd.edu}{lbuzaglo@ucsd.edu}}

\address[Ingalls]{School of Mathematics and Statistics, Carleton University, Ottawa, ON K1S 5B6, Canada}
\email{\href{mailto:ColinIngalls@cunet.carleton.ca}{ColinIngalls@cunet.carleton.ca}}

\subjclass[2020]{17B66, 17B65, 14H05, 13N15}
\keywords{Vector fields, derivations, infinite-dimensional Lie algebra, Krichever--Novikov algebra, subalgebra, Dixmier conjecture, classification}

\begin{document}

\begin{abstract}
    We study the subalgebra structure of Krichever--Novikov algebras, which are Lie algebras of vector fields on smooth affine curves. Our main result is that every infinite-dimensional subalgebra of a Krichever--Novikov algebra is isomorphic to a finite-codimensional subalgebra of another Krichever--Novikov algebra.
    
    We then present some applications of our main result. First, we show that the universal enveloping algebra of any such infinite-dimensional subalgebra is not noetherian. We then prove that all Krichever--Novikov algebras satisfy the Dixmier property that all their nonzero endomorphisms are automorphisms, except for the Witt algebra of vector fields on the once-punctured affine line. Finally, we provide an explicit classification of the infinite-dimensional subalgebras of the Witt algebra.
\end{abstract}

\maketitle

\section{Introduction}

Unlike their finite-dimensional counterparts, the general structure of infinite-dimensional Lie algebras remains mysterious. Indeed, there have been many attempts to classify certain classes of infinite-dimensional Lie algebras, usually requiring rather strong assumptions. For example, the $\ZZ$-graded simple Lie algebras of polynomial growth were classified in Mathieu's seminal works \cite{Mathieu1, Mathieu2, Mathieu3}. Another example of a classification result is \cite{Fialowski}, again with strong assumptions on the Lie algebras being classified.

One of the most important examples of an infinite-dimensional Lie algebra is the \emph{one-sided Witt algebra} $\WW_1$, which is the Lie algebra of vector fields on $\Aa^1$. Not only is this Lie algebra important mathematically, appearing in both Mathieu's and Fialowski's classification results, but it is also highly relevant in physics, with applications in conformal field theory and string theory. More generally, one can consider Lie algebras of vector fields on other (smooth) affine curves; these are known as \emph{Krichever--Novikov algebras} \cite{KricheverNovikov, Schlichenmaier}. While the local structure of Lie algebras of vector fields can be studied using formal algebraic models (see, for example, \cite{GuilleminSternberg}), their global structure is dependent on the underlying geometry. As noted in \cite{GuilleminSternberg}, ``the problem of describing the global possibilities for a fixed geometrical structure is a problem of an entirely different order and is usually extremely interesting and difficult''. Since Krichever--Novikov algebras are defined globally, we must use algebro-geometric techniques to bridge this gap.

The paper \cite{BellBuzaglo} gives a surprising and elegant classification of subalgebras of $\WW_1$: every infinite-dimensional subalgebra of $\WW_1$ is isomorphic to a finite-codimensional subalgebra of $\WW_1$ itself. Inspired by this classification result, we study subalgebras of Krichever--Novikov algebras over an algebraically closed field $\kk$ of characteristic zero, which we denote by $\L_C$, where $C$ is a smooth affine curve. Our main result completely generalizes \cite[Corollary 4.1]{BellBuzaglo}: we prove that every infinite-dimensional subalgebra of a Krichever--Novikov algebra is, up to finite codimension, another Krichever--Novikov algebra.

\begin{theorem}[Theorem~\ref{thm:main}]\label{thm:intro main}
    Let $C$ be a smooth affine curve and let $\g$ be an infinite-dimensional Lie subalgebra of $\L_C$. Then there exists another smooth affine curve $D$, defined canonically from the subalgebra $\g$, such that $\g$ embeds in $\L_D$ with finite codimension.
\end{theorem}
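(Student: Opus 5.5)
Write $A=\O(C)$, so that $\L_C=\Der(A)=A\,\del$ for a suitable generator when $\L_C$ is free of rank one, and more generally $\L_C=\Der(A)$ is a rank-one projective $A$-module. The plan is to extract from $\g$ a canonical subring of the function field $K=\kk(C)$ and to show that $\g$ sits with finite codimension inside the vector fields of the associated curve.

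\emph{Step 1: find a ratio field and a ring.} Fix a nonzero $\del\in\g$. For every $u\in\g$, view the ratio $u/\del$ as an element of $K$. Define
\[
R=\{f\in K:\ f\g\subseteq \g'\}
\]
in a suitable sense. It is cleaner to proceed as follows. Let $F\subseteq K$ be the subfield generated by all ratios $u/v$ with $u,v\in\g$ nonzero. The bracket identity $[f\del,g\del]=(fg'-f'g)\del$ shows that $\g$ is closed under the operations $(f,g)\mapsto fg'-f'g$. Following the method of \cite{BellBuzaglo}, one shows that since $\g$ is infinite-dimensional, $F$ has transcendence degree one. In fact, if all ratios were constants, then $\dim\g=1$.

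\emph{Step 2: construct the curve $D$.} Consider the derivation $\delta=\del|_F$. We need $\del(F)\subseteq F$ up to a twist: the ratios $[u,v]/v=(u/v)'\cdot (v/\del)$ lie in $F$. After fixing $w\in\g$, the derivation $w$ restricts to a derivation of $F$. Let $B\subseteq F$ be the ring $\{f\in F: f\g\subseteq\g\}$, or better, the integral closure of the $\kk$-algebra generated by the ratios $u/w$ for $u\in\g$. Then $D$ is the smooth affine curve with $\O(D)$ given by the normalization of the subring of $F$ generated by these ratios, localized so that $w$ and all of $\g$ become regular vector fields on $D$. Finiteness holds because $C\to$ (projective model of $F$) is a finite map: $K/F$ is finite, since both have transcendence degree one. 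Consequently $D$ is the image of $C$ minus finitely many points, and it is affine. This gives an embedding $\g\hookrightarrow\L_D$, depending only on $\g$.

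\emph{Step 3: finite codimension.} We must show $\dim\L_D/\g<\infty$. The key is to show that the set $I=\{f\in\O(D): f\,\g\subseteq\g\}$ is a nonzero ideal of finite codimension, and that $\g\supseteq I\cdot\L_D\cdot J$ for some nonzero ideal $J$. Write elements as $f w$ with $f\in\O(D)$. From $[fw,gw]=(fw(g)-gw(f))w$ and the identity $[fw,[gw,hw]]$, one gets that $\g$ is closed under enough multiplication. Equivalently, $\g$ contains $f\cdot h\, w$ whenever $f,g,h$ range over a finite-codimension region, as in the proof for $\WW_1$, which uses orders of vanishing at points. We work pointwise, using valuations at the finitely many points of $\overline D\setminus D$ and at the zeros of $w$. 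The set of valuation vectors attained by $\g$ is a semigroup-like set. It is cofinite in the corresponding set for $\L_D$, because $F$ is generated by ratios from $\g$, which forces the gcd conditions to hold. A Riemann--Roch count then converts cofiniteness of valuations into finite codimension.

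\emph{Main obstacle.} The main obstacle is Step 3, together with the claim that $D$ is canonical and that its ring is exactly the one generated by $\g$, and not something larger. For $\WW_1$ this was a single-valuation semigroup argument. Here several points at infinity appear, and we must make sure that the ratios in $\g$ separate them; otherwise $\g$ would naturally live on a smaller curve. I would first attempt to prove that the associative algebra $A_\g$ generated by all $u/v$ with $u,v\in\g$ has fraction field $F$, and that $A_\g\g\subseteq\g+$(finite-dimensional). I would then apply the $\WW_1$ argument locally at each point of $\overline D\setminus D$.
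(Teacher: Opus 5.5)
\textbf{There is a genuine gap: Step 3, the heart of the theorem, is asserted rather than proved.}

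Your Steps 1--2 essentially recover the paper's construction: the field of ratios $F(\g)$, its smooth projective model $\widetilde{D}$, and $D$ as the locus where every element of $\g$ is regular. Two details there are off.
\begin{enumerate}
\item $D$ must be defined as that regular locus, not via the normalization of $\kk[u/w : u\in\g]$. For $\g=\WW_1$ and $w=t\del$ that ring is $\kk[t,t^{-1}]$, and no localization puts the point $0$ back. But $\WW_1$ has infinite codimension in $\L_{\Aa^1\nonzero}=W$.
\item $D$ contains $\pi(C)$ and can be strictly larger (Example~\ref{ex:easy example}). It is not ``the image of $C$ minus finitely many points''.
\end{enumerate}

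Even the single-pole input $d_p(\g)=1$ does not follow directly from ``$F$ is generated by ratios''. If every $p$-degree is divisible by $d$, then every ratio has $p$-order divisible by $d$. That property is not preserved under sums, so nothing yet contradicts a uniformizer $s\in F$ having order $1$. The paper needs the Hensel normal form $v=\alpha r^n\del_r$ and the Veronese rigidity of Lemma~\ref{lem:contained in Veronese}. These give $\g\subseteq \kk((r^{-d}))r\del_r$, hence $F(\g)\subseteq\kk((r^{-d}))$ (Proposition~\ref{prop:g has almost all poles}).

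More seriously, information at each pole separately cannot give finite codimension once $|\widetilde{D}\setminus D|\geq 2$. By Proposition~\ref{prop:dimension of graded pieces}, you need, for every pole $p$ and all $n\gg 0$, an element of $\g$ with $\deg_p=n$ and $\deg_q<n$ at every other pole $q$. This is a joint condition across poles.
\begin{itemize}
\item The subalgebra $\spn\{e_n-e_{-n}\}$ of Example~\ref{ex:harder example}, viewed inside $W$, has cofinite degree sets with gcd $1$ at both $0$ and $\infty$. Yet it has infinite codimension in $W$, because each of its elements has equal degrees at the two poles. What excludes this on $\widetilde{D}$ is the condition $F(\g)=\kk(D)$ used at two points simultaneously, which your sketch never does.
\item The set of degree vectors is not a semigroup. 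By Lemma~\ref{lem:degrees form a semigroup}, a bracket adds degree vectors only when they differ in every coordinate, and this is exactly what fails in diagonal situations.
\end{itemize}
So applying the $\WW_1$ argument locally at each point cannot close the argument. Your claim $\g\supseteq I\L_D J$ with $I,J$ nonzero already implies the theorem, and you offer no mechanism for proving it.

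The paper handles this by induction on the number of poles:
\begin{enumerate}
\item Lemma~\ref{lem:degrees at different points are different} shows degree vectors are not diagonal.
\item Proposition~\ref{prop:dichotomy of poles at p and q} gives a dichotomy, proved via an injective, sub-additive comparison function $f$ that is forced to be the identity.
\item Lemma~\ref{lem:good point} selects a good pole $p$.
\item A ramification argument (Proposition~\ref{prop:field of g_p}) shows $F(\g_p)=F(\g)$, so $D_{\g_p}=D\cup\{p\}$ and the inductive hypothesis applies to $\g_p=\{w\in\g\mid \deg_p(w)\leq 1\}$.
\item A single element of large $p$-degree then completes each graded piece.
\end{enumerate}
These are the missing ideas.
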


The new curve $D$ is defined from the subalgebra $\g$ by way of its \emph{field of ratios} $F(\g)$, which is a subfield of $\kk(C)$ (see Definition~\ref{def:ratios}). We define $\widetilde{D}$ to be a smooth projective curve whose function field is $F(\g)$. The affine curve $D$ is then defined to be the set of points of $\widetilde{D}$ where all vector fields in $\g$ are regular (meaning they have no poles).

The rest of the paper is devoted to presenting some applications of Theorem~\ref{thm:intro main}. One of the most noteworthy consequences is that all Krichever--Novikov algebras satisfy the \emph{Dixmier property} (see Definition~\ref{def:Dixmier}), except for the Witt algebra $W \coloneqq \L_{\Aa^1 \nonzero}$ of vector fields on the punctured affine line.

\begin{theorem}[Theorem~\ref{thm:Dixmier}]\label{thm:intro Dixmier}
    Let $C$ be a smooth affine curve. Then $\End(\L_C) \nonzero = \Aut(\L_C)$ if and only if $C \not\cong \Aa^1 \nonzero$.
\end{theorem}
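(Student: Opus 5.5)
Let $\phi\colon \L_C \to \L_C$ be a nonzero endomorphism. I first show that $\phi$ is injective. When $C$ is not rational, and more generally when $C\not\cong\Aa^1$, I expect $\L_C$ to be simple; this is a standard fact about Lie algebras $\Der(\O(C))$ of a smooth affine curve, since $\O(C)$ is a simple differential ring. Simplicity gives $\ker\phi=0$. For $C=\Aa^1$, $\WW_1$ is also simple, so no special case is needed there.

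Next I show that $\phi$ is surjective whenever $C\not\cong \Aa^1\nonzero$. The image $\g=\phi(\L_C)$ is an infinite-dimensional subalgebra isomorphic to $\L_C$. By Theorem~\ref{thm:intro main}, $\g$ embeds with finite codimension into $\L_D$, where $D$ is built from the field of ratios $F(\g)\subseteq \kk(C)$. The plan is:
\begin{enumerate}
\item Show that an isomorphism between $\L_C$ and a finite-codimensional subalgebra of $\L_D$ forces $D\cong C$. For this I would read off $\kk(C)$ intrinsically, for instance as $F$ of the Lie algebra, or via the associative structure recovered from ad-locally-finite or semisimple elements. Finite-codimensional subalgebras of $\L_D$ have the form $\L_D$ intersected with a vanishing condition of bounded order at finitely many points, so they have the same field of ratios $\kk(D)$. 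This would give $\kk(C)\cong\kk(D)$, and then comparing pole behaviour should give $C\cong D$.
\item Show that $\phi$ is induced by a geometric map. The inclusion $F(\g)\subseteq\kk(C)$ together with $F(\g)\cong\kk(C)$ yields a finite morphism $\pi\colon C\to\widetilde D$ of some degree $d$. The pushforward of vector fields along $\pi$ is what realises $\g$ inside $\L_D$. Riemann--Hurwitz then bounds $d$: the genus of $\widetilde{D}$ equals $g(C)$, so $2g-2 = d(2g-2)+R$. This forces $d=1$ unless $g\le1$.
\item Handle $g\le 1$ by counting punctures. The condition that $\g$ consists of regular vector fields pulled back, and that $\L_D\cong \L_C$, requires the number of points at infinity and the ramification to match. Étale covers of elliptic curves, and $z\mapsto z^d$ on $\Aa^1\nonzero$, are the only sources of $d>1$.
\end{enumerate}
I expect step (3) to be the main obstacle. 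For an elliptic curve minus points, an isogeny $C'\to E$ of degree $d$ multiplies the number of punctures by $d$ when the puncture set is a full preimage. So $\L_C$ maps into itself only if the puncture counts agree, which forces $d=1$ once $C$ has at least one puncture. For rational $C$ with $n$ punctures, a degree-$d$ map $\PP^1\to\PP^1$ pulling back the puncture set to itself requires total ramification over punctures. Riemann--Hurwitz then allows $d>1$ only if $n\le2$, and $n=1$ ($\Aa^1$) fails because a vector field $z^d$-pushforward acquires poles at $0$.

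Once $d=1$, the map $\pi$ is an isomorphism, $\g=\L_D$, and so $\phi$ is surjective. This gives $\End(\L_C)\nonzero=\Aut(\L_C)$ for every $C\not\cong\Aa^1\nonzero$.

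For the converse I exhibit a non-surjective endomorphism of $W$. The map $e_n=z^{n+1}\del\mapsto \frac1d e_{dn}$, induced by $z\mapsto z^d$, is an injective Lie homomorphism: $[\tfrac1d e_{dm},\tfrac1d e_{dn}]=\tfrac{1}{d^2}d(n-m)e_{d(m+n)}$, which matches $\tfrac1d(n-m)e_{d(m+n)}$. For $d\ge2$ it is not surjective, so $W$ fails the Dixmier property.
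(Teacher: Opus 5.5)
There is a genuine gap. Your overall plan broadly follows the paper: apply Theorem~\ref{thm:main} to $\g=\phi(\L_C)$, compare $C$ with $D$, and rule out $\deg\pi\ge 2$ by Riemann--Hurwitz except for $\Aa^1\nonzero$. Your Witt-algebra counterexample is also correct. The gap is that you never use simplicity of $\g$ to upgrade ``$\g$ has finite codimension in $\L_D$'' to ``$\g=\L_D$''. The argument needs exactly this in two places.

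\begin{itemize}
\item \textbf{The final step.} ``Once $d=1$, $\pi$ is an isomorphism, $\g=\L_D$'' does not follow. From $d=1$ you only get $\widetilde D=\widetilde C$ and $C\subseteq D$ (hence $D=C$ if you know $D\cong C$). That leaves $\g$ a finite-codimensional subalgebra of $\L_C$ isomorphic to $\L_C$, which is precisely the finite-codimension case of the theorem and is still unproved.
\item \textbf{The ramification argument for $\Aa^1$.} Your claim that ``the pushforward acquires a pole'' needs $\g$ to contain a vector field not vanishing at $\pi(p)$. This holds for $\L_D$ but not for its finite-codimensional subalgebras. For example, $\kk[t^2]t\del\subseteq\WW_1$ has codimension one in $\L_{\Aa^1_s}$ with $s=t^2$. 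Here $\pi$ is ramified at $0\in\Aa^1$, yet the subalgebra has no poles on $\Aa^1$. It is excluded only because it is not simple.
\end{itemize}

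The paper supplies the missing step with Grabowski's fundamental lemma (Lemma~\ref{lem:fundamental}). A proper finite-codimensional subalgebra of $\Der(\O_D)$ lies in $\Der_{\mathfrak m}(\O_D)$ for some maximal ideal $\mathfrak m$, i.e.\ in the fields vanishing at some point. By Lemma~\ref{lem:degrees form a semigroup}, its derived series then has trivial intersection, so it is not simple (Corollary~\ref{cor:proper subalgebra not simple}). Hence $\g=\L_D$. The same corollary also disposes of the case where $\g$ has finite codimension in $\L_C$.

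Your step (1), $C\cong D$, is also unsupported as written. The field of ratios depends on how $\g$ sits inside $\Q_C$: $W$ and $\varphi_k(W)=\kk[t^k,t^{-k}]t\del$ are isomorphic, but their fields of ratios are the different subfields $\kk(t)$ and $\kk(t^k)$ of $\kk(t)$. You give no argument that the isomorphism class of $F$ is an invariant of the abstract Lie algebra, so ``reading off $\kk(C)$ as $F$'' does not compare $\kk(C)$ with $\kk(D)$. Your description of finite-codimensional subalgebras as vanishing conditions is also inaccurate: $\spn\{e_0,e_2,e_3,\dots\}\subseteq\WW_1$ is not of that form. Once $\g=\L_D$ is known, you can simply cite Siebert's theorem that $\L_C\cong\L_D$ implies $C\cong D$, as the paper does.

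With these two inputs supplied, your genus-by-genus Hurwitz analysis goes through. It is a legitimate variant of the paper's ending. The paper first shows $\pi|_C$ is \'etale: a field in $\L_D=\g\subseteq\L_C$ that is nonvanishing at $\pi(p)$ has $p$-degree $e_p$, which must be $\le 1$. It then applies Proposition~\ref{prop:Jacobian curves}, whose proof is the same Hurwitz count you perform. Total ramification over the punctures is forced by Lemma~\ref{lem:poles in D come from poles in C} together with the equal number of punctures.
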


The proof of Theorem~\ref{thm:intro Dixmier} is essentially achieved by showing that a nonzero endomorphism of $\L_C$ is induced by an \'etale self-map of $C$. It is not difficult to show that the only smooth affine curve which has nontrivial \'etale self-maps is $C = \Aa^1 \nonzero$: these are the maps $x \mapsto x^n$ for $n \geq 2$. However, proving that nonzero endomorphisms of $\L_C$ are induced by \'etale self-maps of $C$ is more challenging, requiring applications of Theorem~\ref{thm:intro main}, as well as deep results from \cite{Grabowski} and \cite{Siebert}.

We then explicitly classify subalgebras of the \emph{Witt algebra} $W \coloneqq \L_{\Aa^1 \nonzero}$, analogously to \cite[Theorem 2.8]{BellBuzaglo}. More precisely, given $s \in \kk[t,t^{-1}]$, we define the subalgebra
$$L(s) \coloneqq W \cap \Der(\kk[s])$$
of $W$, where the intersection is taken in $\Der(\kk(t)) = \kk(t)\del$ upon identifying $\Der(\kk[s]) = \frac{1}{s'}\kk[s]\del \subseteq \kk(t)\del$. We then show that all subalgebras of $W$ are either essentially of the form $L(s)$ for some $s \in \kk[t,t^{-1}]$, or they are contained in a Veronese subalgebra (see Definition~\ref{def:L(s)}).

\begin{theorem}[Theorem~\ref{thm:classification up to finite codimension}, Corollary~\ref{cor:sandwich}]\label{thm:intro classification}
    Let $\g$ be an infinite-dimensional subalgebra of $W$ which does not have finite codimension in any Veronese subalgebra of $W$. Then there exists $s \in \kk[t,t^{-1}]$ and a subalgebra $\h \subseteq \g$ which is also a $\kk[s]$-module such that
    $$\h \subseteq \g \subseteq L(s).$$
    In particular, $\g$ has finite codimension in $L(s)$.
\end{theorem}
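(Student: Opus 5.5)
The plan is to derive this from Theorem~\ref{thm:intro main} applied with $C = \Aa^1 \nonzero$. Write $W = \kk[t,t^{-1}]\del$ and $\g = f\del$ with $f$ running over an infinite-dimensional subspace $V \subseteq \kk[t,t^{-1}]$. First I form the field of ratios $F(\g) \subseteq \kk(t)$. Because $\g$ is infinite-dimensional, $F(\g)$ is not $\kk$; it is therefore the function field of a curve, and by L\"uroth's theorem it equals $\kk(u)$ for some $u \in \kk(t)$. Let $\widetilde D = \PP^1$ with coordinate $u$, and let $D \subseteq \widetilde D$ be the regular locus of $\g$. Theorem~\ref{thm:intro main} then says that $\g$ has finite codimension in $\L_D$, where $\L_D$ sits inside $\kk(t)\del$ through the identification $\Der(\kk(u)) = \frac{1}{u'}\kk(u)\del$.

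The main step is to identify the pair $(u,D)$. The inclusion $\kk(u) \subseteq \kk(t)$ corresponds to a finite map $\pi\colon \PP^1_t \to \PP^1_u$. Every element of $\g$ is regular on $\Aa^1\nonzero$, and it is also regular there when viewed as a vector field on $D$. I expect this to force $\pi^{-1}(D)$ to be essentially contained in $\Aa^1 \nonzero$, and I also expect the finite codimension to force $\pi$ to be étale over $D$. Consequently $D$ is $\PP^1$ with at most two points removed, since the preimage of $\PP^1\setminus D$ has to contain $\{0,\infty\}$.

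This gives three cases. If $D \cong \Aa^1$, I choose the coordinate $u$ so that the removed point is $u = \infty$. Then $\pi^{-1}(\infty) \supseteq \{0,\infty\}$, and with étaleness over $\Aa^1_u$ this means $u = s \in \kk[t,t^{-1}]$. In this case $\L_D = \Der(\kk[s])$, and $\g \subseteq W \cap \Der(\kk[s]) = L(s)$. If $D \cong \Aa^1\nonzero$, then $\pi$ restricts to an étale cover of $\Aa^1\nonzero$ by $\Aa^1 \nonzero$, so $u = \lambda t^{\pm n}$, and $\g$ has finite codimension in the Veronese subalgebra. This case is excluded by hypothesis. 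If $D = \PP^1$, then $\L_D$ is finite-dimensional, which is impossible.

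Next I produce $\h$. By Theorem~\ref{thm:intro main}, $\g$ has finite codimension in $\Der(\kk[s]) = \frac{1}{s'}\kk[s]\del$, which is a free $\kk[s]$-module of rank one. Let $\h = \{x \in \g : \kk[s]x \subseteq \g\}$. To see that $\h$ has finite codimension, I write $\g \supseteq \frac{1}{s'} I \del$ up to finite codimension and take $I$ to be the largest $\kk[s]$-submodule of $\kk[s]$ whose image lies in $\g$. Any subspace of finite codimension in $\kk[s]$ contains a nonzero ideal: it contains a subspace of the form $\{p : \phi_1(p) = \dots = \phi_k(p) = 0\}$, and the annihilator of this subspace under multiplication is nonzero because $\kk[s]$ modulo a finite-codimensional subspace is finite-dimensional. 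So $\h \neq 0$ has finite codimension. It is a $\kk[s]$-module by construction. It is closed under the bracket because $[ax,by] = ab[x,y] + a x(b)y - b y(a) x$ with $x(b), y(a) \in \kk[s]$, and because $\g$ is a subalgebra every term lies in $\g$. Together these give $\h \subseteq \g \subseteq L(s)$.

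The main obstacle is the geometric identification in the second paragraph. I need to show that regularity on $\Aa^1\nonzero$, together with finite codimension, really forces the removed points to lie under $\{0,\infty\}$ and forces $u$ to be a Laurent polynomial and not merely a rational function. For this I would use valuations: at a point $p$ with $\pi(p) \in D$, a vector field $\frac{g(u)}{u'}\del$ with $g$ generic in $\kk[D]$ has a pole at $p$ when $\pi$ ramifies there, or when $p \in \{0,\infty\}$ is missing from the fibre.
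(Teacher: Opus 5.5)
Your outline (L\"uroth, Theorem~\ref{thm:main}, a case split on $|\widetilde{D}\setminus D|$, then a $\kk[s]$-module inside $\g$) is the paper's. However, both steps that you supply yourself fail.

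\textbf{The geometric step.} You have the key inclusion backwards, and the \'etaleness you expect is false. What actually holds, with no use of finite codimension, is $\pi^{-1}(\widetilde{D}\setminus D)\subseteq\{0,\infty\}$ (Lemma~\ref{lem:poles in D come from poles in C}). Indeed, if $v\in\g$ has a pole at $x$, Lemma~\ref{lem:ramification} gives $\ord_q(v)=e(\ord_x(v)-1)+1<0$ at every $q$ over $x$, whereas $v\in W$ is regular on $\Aa^1\nonzero$. Since $\pi$ is surjective with disjoint fibres, this gives $|\widetilde{D}\setminus D|\le 2$. In the one-point case, the coordinate $u$ has its poles in $\pi^{-1}(\infty)\subseteq\{0,\infty\}$, so $u\in\kk[t,t^{-1}]$. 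In the two-point case, both fibres are singletons in $\{0,\infty\}$, so $u=\lambda t^{\pm n}$ after normalizing $u$ to have its zero and pole at the removed points.

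Your inclusion $\{0,\infty\}\subseteq\pi^{-1}(\widetilde{D}\setminus D)$ fails in Example~\ref{ex:easy example}, where $u=t^2$ and $\pi(0)=0\in D$. It would not bound the number of removed points anyway. \'Etaleness over $D$ fails in Example~\ref{ex:harder example}: $u=t+t^{-1}$ ramifies at $t=\pm1$ over $u=\pm2\in D$, yet $\g=\kk[u](u^2-4)\del_u$ has codimension $2$ in $\L_D$. Your valuation sketch only shows that a \emph{generic element of $\L_D$} has a pole at a ramification point over $D$. The elements of $\g$ may all vanish at the branch points, so that argument needs $\g=\L_D$, as in the proof of Theorem~\ref{thm:Dixmier}. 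In fact, under both of your expectations the case $D\cong\Aa^1$ would be empty. By Riemann--Hurwitz, a degree-$d$ map $\PP^1\to\PP^1$ that is \'etale over $\Aa^1$ has $|\pi^{-1}(\infty)|=2-d$, so $d=1$ and the fibre cannot contain $\{0,\infty\}$.

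\textbf{The construction of $\h$.} Your claim that every finite-codimensional subspace of $\kk[s]$ contains a nonzero ideal is false. Take the functional $\phi(s^n)=1/n!$. If $0\neq a=\sum_{j\le m}a_js^j$ satisfied $\phi(as^n)=0$ for all $n$, then $(1/n!)$ would satisfy a constant-coefficient linear recurrence, forcing $e^x=\sum_n x^n/n!$ to be rational. So the hyperplane $\ker\phi$ contains no nonzero ideal. Your annihilator argument breaks because $\kk[s]/V$ is not a $\kk[s]$-module unless $V$ is already an ideal. Hence nothing you wrote excludes $\h=0$, which would make the statement empty.

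The missing input must use that $\g$ is a \emph{subalgebra}: a finite-codimensional subalgebra of $\Der(\kk[s])\cong\WW_1$ contains $f\,\Der(\kk[s])$ for some nonzero $f\in\kk[s]$. This is what the paper obtains for Corollary~\ref{cor:sandwich} by an argument like \cite[Lemma 1.9]{BellBuzaglo}. Then $\h=\kk[s]f\del_s=\kk[s]g\del=L(s,g)$ with $g=f/s'$, and $g\in\kk[t,t^{-1}]$ because $f\del_s\in\g\subseteq W$. Your check that the largest $\kk[s]$-submodule of $\g$ is closed under the bracket is correct, but the difficulty is elsewhere.
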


Another application of Theorem~\ref{thm:intro main} is to the Sierra--Walton conjecture, which states that the universal enveloping algebra of a Lie algebra $\g$ is noetherian if and only if $\g$ is finite-dimensional \cite[Conjecture 0.1]{SierraWalton}, a long-standing question going back more than fifty years \cite{AmayoStewart}. It was shown in \cite{SierraWalton} that $U(\WW_1)$ is not noetherian, and \cite{Buzaglo} generalizes this to any Krichever--Novikov algebra, while \cite{BellBuzaglo2} further extends this to all Lie algebras of derivations of finitely generated associative algebras. Furthermore, it follows easily from the classification of subalgebras of $\WW_1$ that $U(\g)$ is not noetherian if $\g$ is an infinite-dimensional subalgebra of $\WW_1$ \cite[Corollary 4.2]{BellBuzaglo}. It is therefore natural to ask if the same can be done for subalgebras of arbitrary Krichever--Novikov algebras. In fact, this is an easy consequence of Theorem~\ref{thm:intro main} and results of \cite{Buzaglo}.

\begin{corollary}[Corollary~\ref{cor:noetherian}]
    Let $C$ be a smooth affine curve and let $\g$ be an infinite-dimensional Lie subalgebra of $\L_C$. Then $U(\g)$ is not noetherian.
\end{corollary}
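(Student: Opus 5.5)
By Theorem~\ref{thm:intro main}, there is a smooth affine curve $D$ such that $\g$ embeds as a finite-codimensional subalgebra of $\L_D$. So the plan is to reduce to that setting and then invoke the non-noetherianity results of \cite{Buzaglo}. There are two natural routes.

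The first route is to use a result of \cite{Buzaglo} that applies directly to finite-codimensional subalgebras of Krichever--Novikov algebras. In \cite{Buzaglo} one shows that $U(\L_D)$ is not noetherian by exhibiting a quotient or subquotient structure, typically a non-finitely-generated ideal or an infinite ascending chain of one-sided ideals. If that argument is stated, or works verbatim, for any finite-codimensional subalgebra $\h \subseteq \L_D$, then we are done immediately with $\h = \g$. I expect this to be the intended proof: combine Theorem~\ref{thm:intro main} with a statement of the form "if $\h$ has finite codimension in $\L_D$ then $U(\h)$ is not noetherian."

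The second route applies if \cite{Buzaglo} only treats $\L_D$ itself; then I would need a transfer argument between $\g$ and $\L_D$. The standard fact I would use is that if $\h \subseteq \g'$ is a subalgebra of finite codimension and $U(\h)$ is noetherian, then $U(\g')$ is noetherian. The reason is that $U(\g')$ is a finitely generated module over $U(\h)$, with generators given by ordered monomials in finitely many complementary basis elements. One has to be careful here, because finite generation as a left module over a noetherian ring only makes $U(\g')$ a noetherian module. That still suffices, since left ideals of $U(\g')$ are $U(\h)$-submodules. So noetherianity of $U(\g)$ would force noetherianity of $U(\L_D)$, which contradicts \cite{Buzaglo}. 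The same works on the right.

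The main obstacle is verifying the finite generation of $U(\g')$ as a $U(\h)$-module, since $\h$ need not be an ideal. I would handle this with the PBW filtration. Choose a basis $x_1,\dots,x_n$ of a complement of $\h$ in $\g'$. By PBW, $U(\g') = \sum U(\h)\, x_1^{a_1}\cdots x_n^{a_n}$, but this sum runs over all exponents, so it gives infinitely many generators. That means this route does not give finite generation.

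Because of this, I would instead argue at the associated graded level, or simply rely on the first route via the explicit results of \cite{Buzaglo}. I expect those results are indeed formulated for finite-codimensional subalgebras of $\L_D$, which is why the corollary is called easy.
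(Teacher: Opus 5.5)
Your overall plan is the paper's. Theorem~\ref{thm:main} puts $\g$ inside $\L_D$ with finite codimension, and $U(\L_D)$ is not noetherian by \cite[Theorem 3.3]{Buzaglo}. This is transferred to $\g$ using the fact that if a finite-codimensional subalgebra has noetherian enveloping algebra, then so does the ambient Lie algebra. That is your second route. The paper does not prove this transfer fact; it cites it as \cite[Proposition 2.1]{Buzaglo}. Your first route is only a guess about what \cite{Buzaglo} contains, and it is not needed.

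\textbf{Gap.} You never justify the transfer step, which is the only nontrivial input besides Theorem~\ref{thm:main}.

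\emph{Your argument for it is false.} By PBW, $U(\L_D)$ is a free left $U(\g)$-module on the ordered monomials in a basis $x_1,\dots,x_n$ of a complement. So it has infinite rank and is not finitely generated as soon as $n \geq 1$.

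\emph{``The associated graded level'' needs more than a gesture.} With $F_0 = U(\g)$ and $F_k = F_{k-1} + F_{k-1}\L_D$, one gets $\gr U(\L_D) \cong U(\g \ltimes M)$, where $M = \L_D/\g$ is an abelian ideal. Since $\g$ acts nontrivially on $M$, this is not a polynomial ring over $U(\g)$.

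\emph{One way to finish.} Suppose $U(\g)$ were noetherian.
\begin{enumerate}
\item Let $K$ be the kernel of $\g \to \mathfrak{gl}(M)$, an ideal of finite codimension in $\g$. Since $U(\g)$ is free over $U(K)$, $U(K)$ is noetherian.
\item $K \oplus M$ is a finite-codimensional ideal of $\g \ltimes M$. Filter $U(\g \ltimes M)$ with $F_0 = U(K \oplus M)$. The associated graded ring is a polynomial ring over $U(K)$ in finitely many central variables, hence noetherian by the Hilbert basis theorem.
\item Both filtrations are nonnegative and exhaustive, so noetherianity lifts through them. This makes $U(\g \ltimes M)$, and then $U(\L_D)$, noetherian, a contradiction.
\end{enumerate}
Either cite \cite[Proposition 2.1]{Buzaglo} explicitly or include an argument of this kind; with that, your proof is complete.
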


Furthermore, the recent paper \cite{Mathieu} suggests that understanding the Krichever--Novikov algebras, particularly those of higher genus, might be important in the study of the Sierra--Walton conjecture. Of particular note are Theorem A and Section 9.2 of \cite{Mathieu}. Theorem~\ref{thm:intro main} establishes some strong structural properties satisfied by these Lie algebras.

Most of the paper is devoted to proving Theorem~\ref{thm:intro main}: in Section~\ref{sec:prelim}, we introduce some notation and basic results about Krichever--Novikov algebras. In Section~\ref{sec:new curve D}, we associate a new affine curve $D$ to an infinite-dimensional subalgebra $\g$ of $\L_C$ and prove that $\g$ canonically embeds in $\L_D$. In Section~\ref{sec:filtration}, we introduce a filtration on $\L_D$ which makes it easier to prove that $\g$ has finite codimension in $D$. Finally, Section~\ref{sec:proof of main theorem} completes the proof of Theorem~\ref{thm:intro main} by induction on the number of points at infinity of $D$. Section~\ref{sec:examples} presents examples of our construction of the curve $D$ for various subalgebras of Krichever--Novikov algebras. Finally, Theorems~\ref{thm:intro Dixmier} and \ref{thm:intro classification} are proved in Sections~\ref{sec:Dixmier} and \ref{sec:classification Witt}, respectively.

\subsection*{Acknowledgments}

This material is based upon work supported by the National Science Foundation under Grant No.~DMS-1928930 and by the Alfred P.~Sloan Foundation under grant G-2021-16778, while the authors were in residence at the Simons Laufer Mathematical Sciences Institute (formerly MSRI) in Berkeley, California, during the Spring 2024 semester. Much of this work was carried out at the SLMath workshop on infinite-dimensional division algebras, and at the Seattle Noncommutative Algebra Conference, both of which took place in December 2025. We thank the organizers of those conferences for the opportunities to discuss our work in person. The first-named author is supported by an AMS--Simons travel grant. The second-named author is supported by an NSERC discovery grant.

\section{Preliminaries}\label{sec:prelim}

Throughout this paper, we use the following conventions.
\begin{itemize}
    \item $\kk$ denotes an algebraically closed field of characteristic zero.
    \item $\kk^*$ denotes the set of nonzero elements of the field $\kk$.
    \item $\NN$ denotes the set of non-negative integers (including zero).
    \item $\kk(X)$ denotes the function field of a (projective or affine) variety $X$.
    \item $\O_X$ denotes the coordinate ring of an affine variety $X$.
    \item All Lie algebras and vector spaces are considered over $\kk$.
\end{itemize}
In this section, we give the main definitions and basic results that we will use throughout the rest of the paper. First, we recall the definition of Lie algebras of vector fields.

\begin{definition}
    Given a $\kk$-algebra $A$, a \emph{derivation} of $A$ is a $\kk$-linear map $d \colon A \to A$ satisfying the \emph{Leibniz rule}
    $$d(ab) = a d(b) + d(a)b$$
    for all $a,b \in A$. The set of all derivations of $A$ is denoted $\Der(A)$. It is a Lie algebra under the \emph{commutator bracket}
    $$[d_1,d_2] = d_1 \circ d_2 - d_2 \circ d_1,$$
    where $d_1,d_2 \in \Der(A)$.
    
    Let $X$ be an affine or projective variety. The \emph{Lie algebra of rational vector fields on $X$} is $\Q_X \coloneqq \Der(\kk(X))$. If $X$ is affine, then the \emph{Lie algebra of vector fields on $X$} is $\L_X \coloneqq \Der(\O_X)$.
\end{definition}

The Lie algebra of vector fields on an affine variety can equivalently be viewed as the space of sections of the tangent bundle of the variety. In other words, we can view $\L_X$ as $H^0(X,\T_X)$, where $X$ is an affine variety and $\T_X$ is its tangent bundle.

\begin{remark}
    If $C$ is a smooth affine curve, then the Lie algebra $\L_C$ is known as a \emph{Krichever--Novikov algebra}. See \cite{Schlichenmaier} and \cite{Buzaglo} for more details.
\end{remark}

Throughout, we fix $C$, an affine irreducible algebraic curve with smooth projective compactification $\widetilde{C}$. Of course, since $\kk(\widetilde{C}) = \kk(C)$, we have $\Q_{\widetilde{C}} = \Q_C$.

Let $p \in \widetilde{C}$. Locally at $p$, a vector field $v \in \Q_C$ has the form $v = f\del_t$, where $t$ is a uniformizer at $p$ and $\del_t$ denotes differentiation with respect to $t$. Here, $f$ is an element of $\kk((t))$, the field of fractions of the complete local ring $\widehat{\O}_{C,p} = \kk[[t]]$. From now on, we will say that ``$v = f\del_t$ locally at $p$'' to mean this without further comment. See the proof of \cite[Lemma 3.12]{Buzaglo} for details.

\begin{definition}
    Let $v \in \Q_C$ and let $t$ be a uniformizer at a point $p \in \widetilde{C}$. Let $v = f\del_t$ locally at $p$, and write
    $$f = \sum_{i = n}^\infty \alpha_i t^i,$$
    where $n \in \ZZ$, $\alpha_i \in \kk$ and $\alpha_n \neq 0$. The \emph{order of $v$ at $p$} is defined as
    $$\ord_p(v) = \ord_p(f) \coloneqq n,$$
    and the \emph{degree of $v$ at $p$} (or the \emph{$p$-degree of $v$}) is defined as
    $$\deg_p(v) \coloneqq 1 - \ord_p(v).$$
    We say that $v$ \emph{has a pole at $p$} if $\ord_p(v) < 0$ (equivalently, $\deg_p(v) \geq 2$), and that $v$ \emph{is regular at $p$} if $\ord_p(v) \geq 0$ (equivalently, $\deg_p(v) \leq 1$).
\end{definition}

We now demonstrate how the above definition works through an example.

\begin{example}
    Let $\WW_1 \coloneqq \L_{\Aa^1} = \Der(\kk[t]) = \kk[t]\del$ be the \emph{one-sided Witt algebra}, where $\del = \diff{t}$. In this case, we have $\widetilde{C} = \PP^1$. Letting $e_n \coloneqq t^{n + 1}\del$ be a standard basis element of $\WW_1$, we have $\deg_0(e_n) = -n$.
    
    To compute $\deg_\infty(e_n)$, we first need to write $e_n$ in terms of a uniformizer at $\infty$. To that end, let $s \coloneqq t^{-1}$. Then $\del_s = \frac{1}{s'}\del = -t^2\del$, and thus $\del = -t^{-2}\del_s = -s^2\del_s$. It follows that
    $$e_n = t^{n + 1}\del = s^{-n - 1}\del = -s^{-n + 1}\del_s.$$
    It follows that $\deg_\infty(e_n) = n$.
\end{example}

As the next result shows, the $p$-degree is additive on Lie brackets. In other words, the $p$-degree of the bracket of two vector fields is equal to the sum of their $p$-degrees, provided their degrees at $p$ are different.

\begin{lemma}\label{lem:degrees form a semigroup}
    Let $p \in \widetilde{C}$, and let $u,v \in \Q_C$. If $\deg_p(u) \neq \deg_p(v)$, then
    $$\deg_p([u,v]) = \deg_p(u) + \deg_p(v).$$
    If $\deg_p(u) = \deg_p(v)$, then $[u,v] = 0$ or $\deg_p([u,v]) < 2\deg_p(u)$.
\end{lemma}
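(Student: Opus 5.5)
We compute locally at $p$. Fix a uniformizer $t$ at $p$ and write $u = f\del_t$ and $v = g\del_t$ with $f, g \in \kk((t))$. The bracket of two such derivations of $\kk((t))$ is
$$[u,v] = (f g' - g f')\del_t,$$
where $'$ denotes $\frac{d}{dt}$. This is the standard formula, and it is compatible with the identification of $\Q_C$ inside $\Der(\kk((t)))$ used in the paper. So $\ord_p([u,v]) = \ord_t(fg' - gf')$, and everything reduces to the leading terms of $f$ and $g$.

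Write $f = \alpha t^m + (\text{higher})$ and $g = \beta t^n + (\text{higher})$ with $\alpha, \beta \neq 0$, so that $m = \ord_p(u)$ and $n = \ord_p(v)$. Then
$$fg' = \alpha\beta n\, t^{m+n-1} + (\text{higher}), \qquad gf' = \alpha\beta m\, t^{m+n-1} + (\text{higher}),$$
and hence
$$fg' - gf' = \alpha\beta(n - m)\, t^{m+n-1} + (\text{higher}).$$
If $\deg_p(u) \neq \deg_p(v)$, then $m \neq n$. Since $\operatorname{char}\kk = 0$, the coefficient $\alpha\beta(n-m)$ is nonzero. So $\ord_p([u,v]) = m + n - 1$, and therefore
$$\deg_p([u,v]) = 1 - (m+n-1) = (1-m) + (1-n) = \deg_p(u) + \deg_p(v).$$

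If $m = n$, the coefficient of $t^{2m-1}$ vanishes. So either $[u,v] = 0$, or $\ord_p([u,v]) \geq 2m$, which gives $\deg_p([u,v]) \leq 1 - 2m < 2 - 2m = 2\deg_p(u)$.

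The only step needing care is the legitimacy of computing the bracket in $\kk((t))\del_t$. Concretely, one must check that the embedding $\kk(C) \hookrightarrow \kk((t))$ makes $\Q_C \to \Der(\kk((t)))$ a Lie algebra homomorphism. This holds because each rational derivation extends uniquely and continuously to the completion. The rest is the routine leading-term comparison above, which uses characteristic zero so that $n - m \neq 0$ in $\kk$.
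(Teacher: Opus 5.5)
Your proposal is correct and follows essentially the same route as the paper: you write $u = f\partial_t$ and $v = g\partial_t$ locally at $p$, and read off the leading coefficient of $fg' - gf'$, which the paper records as $(n-m)\alpha_{1-n}\beta_{1-m}t^{1-(n+m)}$ in degree notation. Your remarks about using characteristic zero and about the bracket being compatible with passing to $\kk((t))$ make explicit two points the paper leaves implicit.
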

\begin{proof}
    Let $t$ be a uniformizer at $p$ and write $u = f\del_t$ and $v = g\del_t$ locally at $p$.
    
    Let $n \coloneqq \deg_p(u)$ and $m \coloneqq \deg_p(v)$, and write $f = \sum_{k = 1 - n}^\infty \alpha_k t^k$ and $g = \sum_{k = 1 - m}^\infty \beta_k t^k$, where $\alpha_k, \beta_k \in \kk$ with $\alpha_{1 - n}, \beta_{1 - m} \neq 0$. We have
    $$[u,v] = [f\del_t, g\del_t] = (f\del_t(g) - \del_t(f)g)\del_t.$$
    It is easy to see that
    $$f\del_t(g) - \del_t(f)g = \Big((n - m)\alpha_{1 - n} \beta_{1 - m} t^{1 - (n + m)} + \mathrm{higher~degree~terms}\Big)\del_t.$$
    If $n = m$, then the above implies that $\deg_p([u,v]) < 2n$, as required. If $m \neq n$, then $t^{1 - (n + m)}$ has nonzero coefficient in $f\del_t(g) - \del_t(f)g$, which implies that
    $$\deg_p([u,v]) = n + m = \deg_p(u) + \deg_p(v).$$
    This concludes the proof.
\end{proof}

Taking repeated brackets between two vector fields with different degrees $p$, we are able to produce elements of large degrees at $p$, using Lemma~\ref{lem:degrees form a semigroup}. As we show next, starting with two vector fields of different degrees, this method can produce vector fields with any large enough degree at $p$, that is a multiple of the greatest common divisor of the two given vector fields.

\begin{corollary}\label{cor:gcd of degrees}
    Let $p \in \widetilde{C}$, let $u,v \in \Q_C$ such that $\deg_p(u) \neq \deg_p(v)$, and let $\g$ be the subalgebra of $\Q_C$ generated by $u$ and $v$. Letting $d \coloneqq \gcd(\deg_p(u),\deg_p(v))$, there exists $k \in \NN$ such that for all $\ell \geq k$, there in an element $w_\ell \in \g$ with $\deg_p(w_\ell) = \ell d$.
\end{corollary}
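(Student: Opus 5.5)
The approach is to study the set $S \subseteq \ZZ$ of $p$-degrees of nonzero elements of $\g$. By Lemma~\ref{lem:degrees form a semigroup}, $S$ is closed under addition of \emph{distinct} elements: if $m,n\in S$ with $m\ne n$ and $w_m,w_n$ realize them, then $\deg_p([w_m,w_n])=m+n$. The goal is to show that $S$ contains every combination $xa+yb$ (with $a=\deg_p(u)$, $b=\deg_p(v)$) for $x,y$ in a suitable range. Then the classical fact that the numerical semigroup generated by $a/d$ and $b/d$ contains all sufficiently large integers finishes the proof.

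First, a caveat. The statement as written fails when the degrees are not positive enough. For example, take $p=0$ in $\WW_1$, $u=\del$ and $v=t\del$ (degrees $1,0$), or $u=t^{-1}\del$ and $v=t\del$ (degrees $2,0$). In both cases the generated algebra is $2$-dimensional. So I would prove it under the hypothesis that it is presumably meant to carry, namely $a,b\ge 1$. In the applications one can arrange this by first bracketing to produce two elements of distinct positive degree; for instance, when $a\ge2$ and $b<0$ one uses the elements of degree $a+b$, $2a+b$, and so on.

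Key steps, assuming $1\le a<b$ (the case $a>b$ is symmetric, swapping $u$ and $v$):
\begin{enumerate}
\item Set $v_0=v$ and $v_{k+1}=[u,v_k]$. By induction, $\deg_p(v_k)=b+ka$. The inductive step applies because $b+ka\neq a$, which holds since $b>a$.
\item Brackets $[v_j,v_k]$ with $j\ne k$ have degrees $2b+(j+k)a$. Since $j+k$ ranges over all $m\ge1$, this gives $2b+ma\in S$ for all $m\ge1$.
\item Induct on $y$: suppose $yb+ma\in S$ for all $m\ge m_y$. Bracketing with $v_k$, whose degree differs as soon as $y\ge2$ or $m$ is large, gives $(y+1)b+ma\in S$ for all $m\ge m_y$. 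Hence $xa+yb\in S$ for all $y\ge1$ and $x\ge m_y$.
\end{enumerate}

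Now let $a'=a/d$ and $b'=b/d$. Every integer $N\ge (a'-1)(b'-1)$ can be written as $N=xa'+yb'$ with $x,y\ge0$. By adding multiples of $a'b'$, i.e.\ replacing $(x,y)$ by $(x+b',\,y)$ or $(x,\,y+a')$, each sufficiently large $N$ has such a representation with $1\le y\le a'$ and $x$ as large as needed. Here only finitely many values of $y$ occur, so the thresholds $m_y$ can be bounded uniformly. This yields the required $k$.

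The main obstacle is the bookkeeping in step 3. Each bracket must be between elements of \emph{different} degrees, and the thresholds $m_y$ must be controlled uniformly over the finitely many $y\le a'$. The coincidences to avoid occur only for finitely many small $m$, so enlarging $m_y$ handles them.
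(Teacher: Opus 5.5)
Your caveat is correct: as stated the corollary is false. Your pairs $\del, t\del$ and $t^{-1}\del, t\del$ at $p=0$ each span a two-dimensional subalgebra, and two elements of negative $p$-degree only produce negative degrees. So a positivity hypothesis such as your $a,b\geq 1$ must be added; presumably it is implicit in the argument of \cite[Lemma 4.7]{Buzaglo} that the paper cites. It can be arranged in the paper's application in Lemma~\ref{lem:poles can be arbitrarily large} by first bracketing repeatedly with an element of $p$-degree at least $2$.

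Under that hypothesis your proof is correct and is essentially the cited argument: closure of the degree set under sums of distinct degrees via Lemma~\ref{lem:degrees form a semigroup}, then the coin-problem fact for $a/d$ and $b/d$. The only blemish is the last step, since replacing $(x,y)$ by $(x+b',y)$ changes $N$. Instead, choose $y\in\{1,\dots,a'\}$ with $yb'\equiv N \pmod{a'}$ and put $x=(N-yb')/a'$, which exceeds every threshold $m_y$ once $N$ is large.
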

\begin{proof}
    Follows by the argument given in \cite[Lemma 4.7]{Buzaglo} using Lemma~\ref{lem:degrees form a semigroup}.
\end{proof}

As seen in Corollary~\ref{cor:gcd of degrees}, the greatest common divisor of the $p$-degrees in a subalgebra of $\L_C$ is a useful invariant. We now set some notation for this.

\begin{notation}
    Let $\g$ be an infinite-dimensional subalgebra of $\Q_C$. Letting $p \in \widetilde{C}$, we write
    $$d_p(\g) \coloneqq \gcd\{\deg_p(v) \mid v \in \g\}.$$
\end{notation}

We finish this section with a result about the $p$-degree of a linear combination of vector fields.

\begin{lemma}\label{lem:properties of deg(g)}
    Let $p \in \widetilde{C}$, and let $u,v \in \L_C$ be linearly independent elements. Then the following hold.
    \begin{enumerate}
        \item If $\deg_p(u) \neq \deg_p(v)$, then $\deg_p(\alpha u + \beta v) = \max(\deg_p(u),\deg_p(v))$ for all $\alpha, \beta \in \kk^*$.\label{item:maximum condition}
        \item If $\deg_p(u) = \deg_p(v)$, then there exist $\alpha,\beta \in \kk^*$ such that $\deg_p(\alpha u + \beta v) < \deg_p(u)$.\label{item:decrease equal degree}
    \end{enumerate}
\end{lemma}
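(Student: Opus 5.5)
This is a local computation in the Laurent series field $\kk((t))$, where $t$ is a uniformizer at $p$. Write $u = f\del_t$ and $v = g\del_t$ locally at $p$, so that $\deg_p$ of a vector field is just $1 - \ord_p$ of its coefficient. The whole statement then reduces to the standard behaviour of the $t$-adic valuation on sums.

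First I record that the map $v \mapsto f$ (sending a vector field to its local coefficient) is $\kk$-linear and injective. Injectivity follows because $\Q_C$ is a one-dimensional $\kk(C)$-vector space, and $\kk(C)$ embeds in $\kk((t))$. Consequently, $\alpha u + \beta v$ has local coefficient $\alpha f + \beta g$. Since $u$ and $v$ are linearly independent, $\alpha u + \beta v \neq 0$ whenever $(\alpha,\beta) \neq (0,0)$, so its degree is well defined.

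For part (\ref{item:maximum condition}), set $n = \ord_p(f)$ and $m = \ord_p(g)$, with $n \neq m$; say $n < m$. The lowest term of $\alpha f + \beta g$ is $\alpha \alpha_n t^n$, which is nonzero because $\alpha \in \kk^*$ and the leading coefficient $\alpha_n \neq 0$. Hence
$$\ord_p(\alpha f + \beta g) = \min(n,m),$$
and therefore
$$\deg_p(\alpha u + \beta v) = 1 - \min(n,m) = \max(\deg_p(u),\deg_p(v)).$$

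For part (\ref{item:decrease equal degree}), suppose $n = m$ and let the leading coefficients be $a, b \in \kk^*$, so $f = a t^n + \cdots$ and $g = b t^n + \cdots$. Take $\alpha = b$ and $\beta = -a$, both nonzero. The $t^n$ coefficient of $\alpha f + \beta g$ then cancels, and this combination is nonzero by linear independence. So its order is strictly greater than $n$, which means its degree is strictly less than $\deg_p(u)$.

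The only point needing care is nonvanishing in part (\ref{item:decrease equal degree}), so that the degree is defined at all. This is exactly where linear independence and the injectivity of the completion map are used. I expect no real obstacle beyond this.
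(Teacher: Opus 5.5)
Your proof is correct and follows essentially the same route as the paper: you compare lowest-order terms in $\kk((t))\del_t$, and in part (2) you take the cancelling combination (the paper's $\mu u - \lambda v$ is your choice $\alpha = b$, $\beta = -a$), with linear independence guaranteeing that this combination is nonzero. Your explicit remark that the local coefficient map is injective makes precise a step the paper uses only implicitly.
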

\begin{proof}
    Let $t$ be a uniformizer at $p$ and view $u$ and $v$ as elements of $\kk((t))\del_t$. So, we can write
    $$u = (\lambda t^n + \text{higher degree terms})\del_t, \qquad v = (\mu t^m + \text{higher degree terms})\del_t,$$
    for some $\lambda,\mu \in \kk^*$ and $n,m \in \ZZ$. By definition, we have $\ord_p(u) = n$ and $\ord_p(v) = m$, so that $\deg_p(u) = 1 - n$ and $\deg_p(v) = 1 - m$.

    \eqref{item:maximum condition} Suppose $\deg_p(u) \neq \deg_p(v)$ (in other words, $n \neq m$). Without loss of generality, assume $n < m$. Letting $\alpha, \beta \in \kk^*$, we see that
    $$\alpha u + \beta v = (\alpha \lambda t^n + \text{higher degree terms})\del_t,$$
    and thus $\ord_p(\alpha u + \beta v) = \min(\ord_p(u),\ord_p(v))$. The result follows.

    \eqref{item:decrease equal degree} Suppose $\deg_p(u) = \deg_p(v)$ (in other words, $n = m$). Then we see that
    $$u - \frac{\lambda}{\mu} v = (\nu t^k + \text{higher degree terms})\del_t,$$
    for some $\nu \in \kk^*$ and $k \in \ZZ$ with $k > n$, since $u$ and $v$ are linearly independent. Therefore,
    $$\ord_p(\mu u - \lambda v) = k > n,$$
    which concludes the proof.
\end{proof}

\section{Associating curves to subalgebras}\label{sec:new curve D}

The main goal of this paper is to understand the subalgebra structure of $\L_C$, generalizing the classification of subalgebras of the \emph{one-sided Witt algebra} $\WW_1 \coloneqq \L_{\Aa^1} = \Der(\kk[t])$ of \cite{BellBuzaglo} to an arbitrary affine curve. The Lie algebra $\WW_1$ is a free $\kk[t]$-module of rank $1$: indeed, we have $\WW_1 = \kk[t]\del$ (where $\del = \diff{t}$). This is one of the key features of $\WW_1$ that the proof of \cite[Theorem 2.8]{BellBuzaglo} relies on, but is no longer true for an arbitrary affine curve $C$. In other words, $\L_C$ is not a free module over $\O_C$ in general. Thankfully, this is not difficult to fix: $\L_C$ is a locally free $\O_C$-module.

\begin{lemma}\label{lem:Q_C is one-dimensional}
    If $\del$ is any nonzero element of $\Der(\kk(C))$, then
    $$\Q_C = \Der(\kk(C)) = \kk(C)\del.$$
\end{lemma}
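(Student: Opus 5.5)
The plan is to show that $\Der(\kk(C))$ is a one-dimensional vector space over $\kk(C)$, with any nonzero derivation as a basis. First, $\Der(\kk(C))$ is naturally a $\kk(C)$-vector space: for $f \in \kk(C)$ and $d \in \Der(\kk(C))$, the map $fd$ is again a derivation. So $\kk(C)\del \subseteq \Der(\kk(C))$, and it remains to prove the reverse inclusion.

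Since $C$ is a curve over a field of characteristic zero, $\kk(C)$ has transcendence degree $1$ over $\kk$. Hence it is a finite separable extension of $\kk(x)$ for a suitable transcendental element $x \in \kk(C)$. Separability is automatic in characteristic zero, but the chosen $x$ must be transcendental.

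Next I would show that a derivation $d$ of $\kk(C)$ is determined by the single value $d(x)$. Any derivation kills $\kk$: it kills $1$, hence every element of the prime field $\mathbb{Q}$, and every element algebraic over $\mathbb{Q}$. This last point uses the separable minimal polynomial, and $\kk$ is algebraic over its prime subfield only if $\kk = \overline{\mathbb{Q}}$. In general we should instead work with $\kk$-linear derivations, which is the paper's definition, so $d(\kk) = 0$ holds by definition. Then $d$ restricted to $\kk[x]$ is $d(x)\frac{d}{dx}$, and this extends uniquely to $\kk(x)$ by the quotient rule.

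For the extension to $\kk(C)$, take $y \in \kk(C)$ with minimal polynomial $P$ over $\kk(x)$. Applying $d$ to $P(y)=0$ gives
$$P^d(y) + P'(y)\,d(y) = 0,$$
where $P^d$ means applying $d$ to the coefficients. Separability gives $P'(y) \neq 0$, so $d(y)$ is determined by $d|_{\kk(x)}$, and hence by $d(x)$. Consequently the map $\Der(\kk(C)) \to \kk(C)$, $d \mapsto d(x)$, is $\kk(C)$-linear and injective. Therefore $\dim_{\kk(C)} \Der(\kk(C)) \leq 1$.

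Finally, let $\del \neq 0$. By injectivity, $\del(x) \neq 0$. For arbitrary $d$, the derivation $d - \frac{d(x)}{\del(x)}\del$ vanishes on $x$, so it is zero by injectivity. Hence $d \in \kk(C)\del$. (The existence of a nonzero derivation, extending $\frac{d}{dx}$ using the same formula, is not needed here, since $\del$ is given.)

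The main obstacle, mild as it is, is the uniqueness-of-extension step through the minimal polynomial, which is exactly where separability is used. The rest is routine.
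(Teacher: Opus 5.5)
Your proof is correct. The paper does not argue this itself. It cites the standard fact \cite[Example 15.2.3]{McConnellRobson}: in characteristic zero, the $\kk$-linear derivations of a finitely generated field extension form a vector space over that field of dimension equal to the transcendence degree. What you have written is a self-contained proof of the transcendence-degree-one case of that fact. You choose a transcendental $x$ with $\kk(C)/\kk(x)$ finite and separable. A $\kk$-linear derivation is then pinned down on $\kk(x)$ by $d(x)$. The relation $P^d(y) + P'(y)\,d(y) = 0$ with $P'(y) \neq 0$ propagates this uniqueness to all of $\kk(C)$, so $d \mapsto d(x)$ is an injective $\kk(C)$-linear map into $\kk(C)$.

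The citation is shorter and covers arbitrary transcendence degree. Your argument shows exactly where each hypothesis enters. Characteristic zero is what guarantees $P'(y) \neq 0$. $\kk$-linearity is what kills the constants, via $d(c) = c\,d(1) = 0$. You were right to settle on $\kk$-linearity rather than on killing $\mathbb{Q}$. For $\kk$ not algebraic over $\mathbb{Q}$ (e.g.\ $\kk = \mathbb{C}$), nonzero derivations of $\kk$ over $\mathbb{Q}$ extend to $\kk(C)$, so the statement would fail for non-$\kk$-linear derivations. As you also note, the lemma only needs the "at most one-dimensional" half, since a nonzero $\del$ is given.
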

\begin{proof}
    It is well-known that the Lie algebra $\Q_C$ is a one-dimensional $\kk(C)$-vector space -- see, for example, \cite[Example 15.2.3]{McConnellRobson}.
\end{proof}

We now fix $\g$, an infinite-dimensional Lie subalgebra of $\L_C$, and a nonzero element $\del \in \L_C$. By Lemma~\ref{lem:Q_C is one-dimensional}, we have $\L_C \subseteq \Q_C = \kk(C)\del$ (where we view $\del$ as a derivation of $\kk(C)$). Therefore, any element of $\L_C$ can be written as $f\del$ for some $f \in \kk(C)$. The next definition is a generalization of \cite[Definition 5.2]{Buzaglo} to our more general situation.

\begin{definition}\label{def:ratios}
    We define the \emph{set of ratios} of $\g$ as
    $$R(\g) = \left\{\frac{u}{v} \in \kk(C) \mid u\del,v\del \in \g, v \neq 0\right\}.$$
    In other words, elements of the set of ratios of $\g$ are obtained by taking two elements $u\del, v\del$ of $\g$ with $v \neq 0$ (where $u,v \in \kk(C)$), and taking the ratio $\frac{u}{v}$.

    The \emph{field of ratios} of $\g$, denoted $F(\g)$, is defined to be the subfield of $\kk(C)$ generated by $R(\g)$.
\end{definition}

\begin{remark}
    We highlight that the definition of $R(\g)$ does not depend on the choice of $\del$. Indeed, if $\del' \in \L_C \nonzero$ is another choice, then $\del' \in \Q_C = \kk(C)\del$, and thus $\del' = f\del$ for some $f \in \kk(C)$. It is now clear that the set
    $$\left\{\frac{u}{v} \in \kk(C) \mid u\del', v\del' \in \g, v \neq 0\right\}$$
    is equal to $R(\g)$, since the extra factor of $f$ cancels in the ratio $\frac{u}{v}$.
\end{remark}

We now show that $\g$ can be embedded into the Lie algebra of derivations of its field of ratios $F(\g)$.

\begin{lemma}\label{lem:g acts on F(g)}
    We have $\g \subseteq \Der(F(\g))$.
\end{lemma}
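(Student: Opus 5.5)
The claim to prove is that every $w \in \g$, viewed as a derivation of $\kk(C)$, maps $F(\g)$ into itself. Since $w$ is a derivation of $\kk(C)$ and $F(\g)$ is a subfield, the set $\{a \in F(\g) \mid w(a) \in F(\g)\}$ is closed under sums, products and inverses. Closure under inverses follows from $w(a^{-1}) = -a^{-2}w(a)$. This set also contains $\kk$, so it is a subfield of $F(\g)$. Hence it suffices to show that $w(r) \in F(\g)$ for every generator $r \in R(\g)$. Restricting each $w$ then gives a map $\g \to \Der(F(\g))$. It is injective because $\g$ is infinite-dimensional, so $F(\g)$ is not contained in $\kk$ (see the last step), and any nonzero derivation of $\kk(C)$ is nonzero on a transcendental element: $\kk(C)$ is algebraic over $\kk(a)$ and the derivation is determined by its value on $a$. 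The map obviously preserves brackets. So $\g \subseteq \Der(F(\g))$.

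The key computation is as follows. Write $w = h\del$, and let $r = u/v$ with $u\del, v\del \in \g$ and $v \neq 0$. Then
$$[v\del, u\del] = (v\del(u) - u\del(v))\del = v^2\del(u/v)\,\del,$$
so $v^2\del(r)\del \in \g$. Therefore
$$w(r) = h\del(r) = \frac{h}{v}\cdot\frac{v^2\del(r)}{v},$$
where $h/v \in R(\g)$, because $h\del, v\del \in \g$. Also $v^2\del(r)/v \in R(\g)$, because the bracket lies in $\g$ and $v\del \in \g$. Hence $w(r)$ is a product of two elements of $R(\g)$ and lies in $F(\g)$. This handles the generators, and the field-closure argument above extends it to all of $F(\g)$.

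The only genuinely delicate point is the injectivity, that is, making sense of "$\subseteq$". If $F(\g) = \kk$, then every ratio is constant, so $\g$ is one-dimensional, which contradicts infinite-dimensionality. So $F(\g)$ contains some $a \notin \kk$. The element $a$ is transcendental over $\kk$, since $\kk$ is algebraically closed. Any nonzero $h\del$ satisfies $h\del(a) \neq 0$, because $\del(a) \neq 0$: a derivation killing a transcendental element kills all of $\kk(C)$, since $\kk(C)$ is algebraic over $\kk(a)$. Hence restriction to $F(\g)$ is faithful. I expect no real obstacle beyond this bookkeeping; the bracket identity is the heart of the argument.
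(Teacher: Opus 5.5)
Your proof is correct and follows the paper's argument. Both use the identity $[v\del,u\del] = v^2\del(u/v)\,\del$ to write $w(u/v)$ as a product of two elements of $R(\g)$, and then pass from the generating set $R(\g)$ to $F(\g)$. Your explicit subfield-closure check and your proof that restriction to $F(\g)$ is faithful are details the paper leaves implicit (it later uses unique extension of derivations along $\kk(\widetilde{D}) \subseteq \kk(\widetilde{C})$), so they add rigor rather than a different route.
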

\begin{proof}
    Let $u\del,v\del,w\del \in \g$, with $v \neq 0$, where $u,v,w \in \kk(C)$, so that $\frac{u}{v} \in R(\g)$. We claim that $w\del(\frac{u}{v}) \in F(\g)$, which means that $\g$ maps $R(\g)$ to $F(\g)$. Since $F(\g)$ is generated by $R(\g)$ as a field, it will then follow that $\g$ preserves $F(\g)$.

    We have
    $$w\del\left(\frac{u}{v}\right) = w \cdot \frac{v\del(u) - u\del(v)}{v^2} = \frac{w}{v} \cdot \frac{v\del(u) - u\del(v)}{v}.$$
    Certainly, $\frac{w}{v} \in R(\g)$ by definition. Note that $[v\del,u\del] = (v\del(u) - u\del(v))\del \in \g$, so we also have
    $$\frac{v\del(u) - u\del(v)}{v} \in R(\g).$$
    It follows that $w\del(\frac{u}{v}) \in F(\g)$, as claimed.
\end{proof}

Next, we use the field of ratios $F(\g)$ to define a projective curve associated to the subalgebra $\g$. We can then remove the points of this new curve where elements of $\g$ have poles to get an affine curve.

\begin{definition}\label{def:curves associated to g}
    The \emph{projective curve associated to $\g$} is the unique smooth projective curve $\widetilde{D}_\g$ such that $F(\g) \cong \kk(\widetilde{D}_\g)$. The \emph{affine curve associated to $\g$} is defined as
    $$D_\g \coloneqq \{p \in \widetilde{D} \mid \ord_p(w) \geq 0 \text{ for all } w \in \g\}.$$
    We will often drop the subscript and write $\widetilde{D}$ and $D$ instead of $\widetilde{D}_\g$ and $D_\g$ if there is no potential for confusion.
\end{definition}

Notice that $\g \subseteq \Q_{\widetilde{D}}$ by Lemma~\ref{lem:g acts on F(g)}, so $\g$ is a subalgebra of (rational) vector fields on $\widetilde{D}$. In fact, as we show next, $\g$ is a subalgebra of vector fields on $D$.

\begin{lemma}
    We have $\g \subseteq \L_{D}$.
\end{lemma}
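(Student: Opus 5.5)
Since $D$ is an open affine subset of the smooth projective curve $\widetilde{D}$, its coordinate ring is $\O_D = \{f \in F(\g) \mid \ord_p(f) \geq 0 \text{ for all } p \in D\}$. A derivation of $F(\g)$ preserving this ring is then exactly an element of $\L_D = \Der(\O_D)$. By Lemma~\ref{lem:g acts on F(g)} every $w \in \g$ is a derivation of $F(\g) = \kk(\widetilde{D})$, so it suffices to show that each $w \in \g$ maps $\O_D$ into $\O_D$. Equivalently, we must check that $w$ is a regular vector field on $D$ in the sense of $\widetilde{D}$. The plan is to reduce this to the pointwise order condition built into the definition of $D$.

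First, I would make precise that ``$\ord_p(w)$'' for $p \in \widetilde{D}$ refers to $w$ viewed as an element of $\Q_{\widetilde{D}} = \Der(F(\g))$. Concretely, take a uniformizer $s \in F(\g)$ at $p$. By Lemma~\ref{lem:Q_C is one-dimensional} applied to $\widetilde{D}$, we can write $w = g\,\del_s$ with $g = w(s) \in F(\g)$, and then $\ord_p(w) = \ord_p(g)$.

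Second, fix $p \in D$ and $f \in \O_D$. Since $f$ is regular at $p$, in the completion $\widehat{\O}_{\widetilde{D},p} = \kk[[s]]$ we have $f \in \kk[[s]]$, hence $\del_s f \in \kk[[s]]$. By the definition of $D$ we have $\ord_p(g) \geq 0$, so $w(f) = g\,\del_s(f)$ has nonnegative order at $p$. Because $p \in D$ was arbitrary, $w(f) \in F(\g)$ is regular at every point of $D$, and hence $w(f) \in \O_D$.

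The only subtle point is justifying that $w(f) = g\,\del_s(f)$, that is, that the derivation $w$ of $F(\g)$ is computed by the local formula in $\kk((s))$. This holds because any derivation of $F(\g)$ is $F(\g)$-proportional to $d/ds$, which extends continuously to $\kk((s))$. Finally, I would note that $D$ is indeed affine. As $\g$ is infinite-dimensional, some $w \in \g$ is nonzero, and a nonzero vector field on a curve has poles somewhere unless the genus is at most $1$. I expect this affineness issue to be the main obstacle, since it is not needed for the inclusion itself but is implicit in writing $\L_D$. The inclusion $\g \subseteq \L_D$ follows in any case.
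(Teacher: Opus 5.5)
Your argument is correct and is essentially the paper's: both rest on $\O_D=\bigcap_{p\in D}\O_{D,p}$ and on the fact that an element of $\g$ with no pole at $p$ preserves $\O_{D,p}$. Your local computation $w(f)=g\,\del_s(f)$ in $\kk[[s]]$ just spells out what the paper asserts in one line. Your side remark on affineness is incomplete in genus $\le 1$, where you need that $H^0(\widetilde D,\T_{\widetilde D})$ is finite-dimensional so that an infinite-dimensional $\g$ must have a pole; finiteness of $\widetilde D\setminus D$ comes from Lemma~\ref{lem:poles in D come from poles in C}. As you note, neither point is needed for the inclusion itself.
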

\begin{proof}
    Let $p$ be a point in $D$. Note that $\g$ must preserve the Zariski local ring $\O_{D,p}$ since $\g$ does not have poles on points of $D$. Therefore, $\g$ preserves the intersection of all localizations at each point in $D$. In other words, $\g$ preserves $\bigcap_{p \in D} \O_{D,p} = \O_D$.
\end{proof}

We will now shift from viewing $\g$ as a subalgebra of vector fields on $C$ to viewing $\g$ as a subalgebra of vector fields on $D$. In some sense, $\L_{D}$ is the more natural space in which $\g$ lives. This is made precise in the next theorem, which is the main result of the paper. The proof will be completed in Section~\ref{sec:proof of main theorem}.

\begin{theorem}\label{thm:main}
    The subalgebra $\g$ has finite codimension in $\L_{D}$.
\end{theorem}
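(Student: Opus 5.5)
Write $\widetilde{D}\setminus D=\{p_1,\dots,p_r\}$. This set is finite and nonempty: $\g$ is infinite-dimensional, so it is not contained in the finite-dimensional space of global regular vector fields on $\widetilde{D}$. Hence $D$ is a smooth affine curve. The plan is to show that $\L_D/\g$ is finite-dimensional by controlling the pole orders of elements of $\g$ at $p_1,\dots,p_r$, as the introduction announces, through a filtration of $\L_D$ by pole orders at infinity and an induction on $r$.

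\emph{Step 1: the filtration.} For $\mathbf n=(n_1,\dots,n_r)$, let $\L_D(\mathbf n)=\{v\in\L_D \mid \deg_{p_i}(v)\le n_i \text{ for all } i\}$. By Riemann--Roch this is finite-dimensional, and $\dim \L_D(\mathbf n+e_i)/\L_D(\mathbf n)=1$ once $\mathbf n$ is large. So it suffices to find $N$ such that, for every $\mathbf n\ge N$ and every $i$, $\g$ contains an element whose degree vector is exactly $\mathbf n+e_i$ at $p_i$ and at most $\mathbf n$ at the other points. Then $\g+\L_D(N)=\L_D$.

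\emph{Step 2: each $d_{p_i}(\g)=1$.} This is where the field of ratios enters. Suppose $d=d_{p}(\g)>1$ for some $p=p_i$. By Lemma~\ref{lem:degrees form a semigroup}, every ratio $u/v\in R(\g)$ has $\ord_p(u/v)=\deg_p(v)-\deg_p(u)\in d\ZZ$. Hence every element of $F(\g)$ has $p$-order divisible by $d$, because valuations of sums, products and quotients stay in the value group generated by these orders. But $p$ is a point of the smooth curve $\widetilde{D}$ whose function field is $F(\g)$, so some element of $F(\g)$ is a uniformizer at $p$, which has order $1$. This contradiction gives $d=1$. Corollary~\ref{cor:gcd of degrees} then yields elements of $\g$ of every sufficiently large $p_i$-degree.

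\emph{Step 3: independent control at the different points.} This is the main obstacle, and I would handle it by induction on $r$. For $r=1$, Step 2 together with Lemma~\ref{lem:properties of deg(g)} already gives the required elements, and hence finite codimension. For $r>1$, I would try to manufacture elements that are "small" at $p_2,\dots,p_r$ but large at $p_1$. One approach is to take $f=u/v\in F(\g)$ built from ratios with a pole at $p_1$ and zeros of high order elsewhere; such $f$ exists because $F(\g)=\kk(\widetilde D)$. Then use brackets and products $w\,\del(f)$-type expressions that stay in $\g$, such as $[w,[v,u]]$ combinations. Alternatively, consider the subalgebra $\g_1=\{w\in\g \mid \deg_{p_j}(w)\le 1 \text{ for } j\ge2\}$ of vector fields regular at $p_2,\dots,p_r$. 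Show that it is still infinite-dimensional with field of ratios $F(\g)$, for instance via a finite-codimension argument using the filtration. Then apply the inductive hypothesis to $\g_1$ with the curve $D\cup\{p_2,\dots,p_r\}$, which has fewer points at infinity. Proving that $\g_1$ is infinite-dimensional and has the full field of ratios is the delicate part. I expect to need Lemma~\ref{lem:g acts on F(g)} to show that $\g$ is stable under multiplication by suitable functions up to finite codimension. Once each $p_i$ can be treated separately, Lemma~\ref{lem:properties of deg(g)} fills in all degree vectors beyond a threshold, and Step 1 concludes.
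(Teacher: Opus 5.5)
Your Step 1 reduction is sound, and your base case matches the paper's once $d_p(\g)=1$ is known. But Step 3, where essentially all of the paper's work lies, contains no actual argument.

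Proving that $\g_1$ is infinite-dimensional, still has a pole at $p_1$, and has $F(\g_1)=F(\g)$ ``via a finite-codimension argument using the filtration'' is circular, since finite codimension is what you are proving. Also, $\g$ is not closed under multiplication by functions, so ``$w\,\del(f)$-type'' elements are unavailable. If $\g_1$ had these properties, your induction would collapse to the base case. So your plan amounts to producing, a priori, elements of $\g$ whose only pole is $p_1$, and that is exactly the hard part.

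The paper instead fills in one carefully chosen point, and for this it needs five ingredients:
(i) for distinct $p,q\in\widetilde{D}\setminus D$, some $w\in\g$ has $\deg_p(w)\neq\deg_q(w)$, using $F(\g)=\kk(\widetilde{D})$;
(ii) Proposition~\ref{prop:dichotomy of poles at p and q}: some $w\in\g$ with $\deg_p(w)\le -2$ has a pole at $q$, or vice versa. This is proved by showing that otherwise $n\mapsto\min\{\deg_q(w)\mid w\in\g,\ \deg_p(w)=n\}$ is injective, subadditive, and at least $n$ by a counting argument, hence the identity, contradicting (i);
(iii) a minimality argument selecting a point $p$ and an element of $\g$ regular at $p$ with poles at all other points at infinity;
(iv) $\deg_p(\g)$ is unbounded below, so $\g_p$ is infinite-dimensional;
(v) $F(\g_p)=F(\g)$, via a ramification argument showing that $\widetilde{D}\to\widetilde{D}_{\g_p}$ has degree one.
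Only then does the inductive hypothesis apply to $\g_p$, with elements of large $p$-degree filling the missing graded direction.

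Step 2 is also invalid as written. The functions whose $p$-order lies in $d\ZZ$ are not closed under addition. Likewise, the value group of the field generated by a set is not generated by the orders of its elements: already $\kk(t+1)$ contains $t$. For a vector-field example, every nonzero element of the span of $z\del_z$, $z^3\del_z$, $(z^5+z^4)\del_z$ has degree $0$, $2$ or $4$ at $z=\infty$. Yet its ratios $z^2$ and $z^2+z$ generate $\kk(z)$, which contains $z$, of order $-1$.

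So $d_p(\g)=1$ genuinely needs the bracket. The paper writes an element with a pole at $p$ as $\alpha r^n\del_r$ in a formal coordinate $r$ (Lemma~\ref{lem:Hensel}). It then uses the Bell--Buzaglo result to get $\g\subseteq\kk((r^{-d}))r\del_r$. Hence all ratios lie in the \emph{field} $\kk((r^{-d}))$, whose value group really is $d\ZZ$, and a uniformizer in $F(\g)$ forces $d=1$.

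Finally, you assert without proof that $\widetilde{D}\setminus D$ is finite. This follows from Lemma~\ref{lem:ramification}, since every pole of $\g$ on $\widetilde{D}$ lies under one of the finitely many points of $\widetilde{C}\setminus C$.
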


Theorem~\ref{thm:main} says that, even though $\g$ might be very far from $\L_C$ (in the sense that $\g$ might have infinite codimension in $\L_C$), it is very close to being the entire Lie algebra $\L_{D}$. In particular, Theorem~\ref{thm:main} implies that every infinite-dimensional subalgebra of a Krichever--Novikov algebra is isomorphic to a finite-codimensional subalgebra of another Krichever--Novikov algebra.

Using Theorem~\ref{thm:main}, it is easy to prove that the universal enveloping algebra of any infinite-dimensional subalgebra of $\L_C$ is non-noetherian, providing further evidence for the Sierra--Walton conjecture \cite[Conjecture 0.1]{SierraWalton}.

\begin{corollary}\label{cor:noetherian}
    Let $C$ be an affine curve, and let $\g$ be an infinite-dimensional subalgebra of $\L_C$. Then $U(\g)$ is not noetherian.
\end{corollary}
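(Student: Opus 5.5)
We deduce Corollary~\ref{cor:noetherian} from Theorem~\ref{thm:main} together with the results of \cite{Buzaglo}. By Theorem~\ref{thm:main}, $\g$ embeds as a finite-codimensional subalgebra of $\L_D$, where $D$ is the affine curve associated to $\g$. Since $\g$ is infinite-dimensional, $D$ is a genuine smooth affine curve (infinite-dimensionality forces $F(\g)$ to have transcendence degree one, so $\widetilde{D}$ is a curve), and $\L_D$ is a Krichever--Novikov algebra. The corollary therefore reduces to the following claim: if $\h$ is a finite-codimensional subalgebra of a Krichever--Novikov algebra $\L_D$, then $U(\h)$ is not noetherian.

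For the reduced claim I would rely on \cite{Buzaglo}, which shows $U(\L_D)$ is not noetherian. As I recall, the argument there applies equally to subalgebras of finite codimension; if that is stated explicitly, I would simply cite it. Otherwise I would transfer non-noetherianity to $\h$ by one of two routes.

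\emph{Route 1 (intrinsic).} Re-run the argument of \cite{Buzaglo} for $\h$ directly. That proof locates a subalgebra with non-noetherian enveloping algebra, for instance one isomorphic to a finite-codimensional subalgebra of $\WW_1$, built from elements with prescribed degrees at a point at infinity. Corollary~\ref{cor:gcd of degrees} lets us produce such elements inside $\h$ as well. Since $\h$ has finite codimension in $\L_D$, its degrees at $p$ include all sufficiently large multiples of $d_p(\L_D)$, so the same construction goes through.

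\emph{Route 2 (formal).} Use the general principle that a noetherian enveloping algebra passes to subalgebras of finite codimension, and conversely. I am less sure of the direction needed here: $U(\L_D)$ is finitely generated as a module over $U(\h)$ only on one side, and only under suitable hypotheses.

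The main obstacle is this transfer from $\L_D$ to $\h$. I would try Route 1 first, since the degree machinery of Section~\ref{sec:prelim} is designed for exactly this kind of argument and avoids subtle ring-theoretic issues.
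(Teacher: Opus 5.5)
Your reduction is exactly the paper's. Theorem~\ref{thm:main} puts $\g$ in finite codimension in $\L_D$, and \cite[Theorem 3.3]{Buzaglo} gives that $U(\L_D)$ is not noetherian. The gap is the transfer step: it is the only remaining ingredient, you leave it unresolved, and your doubts point at the wrong thing. What is needed is the \emph{ascent} principle: if $\h$ is a subalgebra of finite codimension in a Lie algebra $L$ and $U(\h)$ is noetherian, then $U(L)$ is noetherian. Its contrapositive, with $\h=\g$ and $L=\L_D$, finishes the proof; the paper simply cites it as \cite[Proposition 2.1]{Buzaglo}.

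Your hesitation in Route 2 rests on a false premise. When $\g\neq\L_D$, $U(\L_D)$ is not finitely generated over $U(\g)$ on either side: by PBW it is free on the infinitely many ordered monomials in a basis of a complement. So module-finiteness is not the mechanism. The standard proof instead filters $U(\L_D)$ by the number of factors taken from a finite-dimensional complement $V$ of $\g$; this is a filtration because $[\L_D,\L_D]\subseteq\g+V$. The associated graded ring is generated over $U(\g)$ by the commuting images of a basis of $V$, and one shows it is noetherian when $U(\g)$ is. Of the two directions in your ``and conversely'', the descent direction ($U(L)$ noetherian $\Rightarrow$ $U(\h)$ noetherian) holds for arbitrary subalgebras by freeness, but it is the ascent direction that is needed here.

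Route 1 does not get around this. Knowing that $\h$ has elements of every large $p$-degree controls only leading terms, so it does not produce a subalgebra of a prescribed isomorphism type. Even if you found $\mathfrak{k}\subseteq\h$ isomorphic to a finite-codimensional subalgebra of $\WW_1$, you would face two further steps. Showing that $U(\mathfrak{k})$ is non-noetherian would again require the same finite-codimension transfer, now inside $\WW_1$. Passing from $\mathfrak{k}$ back to $\h$ would require the descent fact, which you do not state. So as written the argument is circular at exactly the point it needs to close; invoking the ascent principle closes it in one line.
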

\begin{proof}
    Let $D \coloneqq D_\g$ be the affine curve associated to $\g$. By Theorem~\ref{thm:main}, $\g$ has finite codimension in $\L_D$. It is known that $U(\L_D)$ is not noetherian \cite[Theorem 3.3]{Buzaglo}. It now follows by \cite[Proposition 2.1]{Buzaglo} that $U(\g)$ is not noetherian.
\end{proof}

\begin{remark}
    The Informal Corollary of \cite{Mathieu} says that it essentially suffices to prove the Sierra--Walton conjecture \cite[Conjecture 0.1]{SierraWalton} for three types of Lie algebras: simple ones, residually nilpotent ones, and a third class of Lie algebras of which we do not currently have any examples. Krichever--Novikov algebras provide important examples of the first two types: they are simple \cite{Siebert}, while many of their subalgebras are residually nilpotent. Theorem~\ref{thm:intro main} shows that infinite-codimensional subalgebras of Krichever--Novikov algebras do not give any new examples of Lie algebras. In other words, it suffices to consider subalgebras of finite codimension.
\end{remark}

\section{Examples}\label{sec:examples}

In this section, we present examples of our construction applied to various subalgebras of Krichever--Novikov algebras.

\subsection{Some subalgebras of the Witt algebra}

First, we consider some subalgebras of the Witt algebra. The \emph{(full) Witt algebra} is $W \coloneqq \Der(\kk[t,t^{-1}]) = \kk[t,t^{-1}]\del$, where $\del \coloneqq \diff{t}$. Define $e_n \coloneqq t^{n + 1}\del$ for $n \in \ZZ$. The Lie bracket of this basis of $W$ is
$$[e_n,e_m] = (m - n)e_{n+m}.$$
For the Witt algebra, we have:
\begin{itemize}
    \item $C = \Aa^1 \setminus \{0\} = \PP^1 \setminus \{0, \infty\}$.
    \item $\widetilde{C} = \PP^1$.
\end{itemize}
The \emph{one-sided Witt algebra} is $\WW_1 \coloneqq \Der(\kk[t]) = \kk[t]\del$, which we view as the subalgebra of $W$ spanned by $e_n$ with $n \geq -1$. For the one-sided Witt algebra, we have:
\begin{itemize}
    \item $C = \Aa^1 = \PP^1 \setminus \{\infty\}$.
    \item $\widetilde{C} = \PP^1$.
\end{itemize}
For the examples in this section, we will need the notion of a \emph{submodule-subalgebra} of $W$ and $\WW_1$.

\begin{definition}
    A \emph{submodule-subalgebra} of $W$ or of $\WW_1$ is a Lie subalgebra which is also a $\kk[t,t^{-1}]$- or $\kk[t]$-submodule of $W$ or $\WW_1$, respectively. It is easy to see that they are of the form $fW = \kk[t,t^{-1}]f\del$ or $f\WW_1 = \kk[t]f\del$ for some $f \in \kk[t]$.
\end{definition}

We now proceed with our first example.

\begin{example}\label{ex:subalgebra of W1}
    Let $\g \coloneqq \kk[t^2]t\del = \spn\{e_0,e_2,e_4,\dots\}$, which is a subalgebra of $\WW_1$. It is easy to see that $\g$ has a pole at $\infty$ in $\widetilde{C} = \PP^1$. Write $s \coloneqq t^2$, so that $F(\g) = \kk(t^2) = \kk(s)$. For clarity, write $\PP^1_s$ to mean $\PP^1$, but with $\kk(\PP^1_s) = \kk(s)$, rather than $\kk(\PP^1) = \kk(t)$, so that $\widetilde{D} = \PP^1_s$.

    Note that $\Der(\kk[s]) = \kk[s]\del_s$, where $\del_s$ is the unique derivation of $\kk[s]$ such that $\del_s(s) = 1$. This derivation can be uniquely extended to a derivation of $\kk(t)$ by the identification
    $$\del_s = \frac{1}{s'}\del = \frac{1}{2t} \del.$$
    Under this identification, we have
    $$e_0 = t\del = 2t^2\del_s = 2s\del_s, \qquad e_2 = t^3\del = 2t^4\del_s = 2s^2\del_s, \qquad e_4 = t^5\del = 2t^6\del_s = 2s^3\del_s, \dots,$$
    so we see that $\g = \kk[s]s\del_s$. Hence, $\g$ only has a pole at $\infty$ in $\widetilde{D} = \PP^1_s$. Therefore, $D = \PP^1_s \setminus \{\infty\} = \Aa^1_s$, where, similarly to before, the notation $\Aa^1_s$ means $\Aa^1$ but with $\O_{\Aa^1_s} = \kk[s]$ instead of $\O_{\Aa^1} = \kk[t]$.
    
    It is now clear that $\g = \kk[t^2]t\del = \kk[s]s\del_s$ is a subalgebra of $\L_D = \Der(\kk[s]) = \kk[s]\del_s \cong \WW_1$ of codimension $1$, and that $\g \cong t\WW_1$.
\end{example}

The next example is similar to Example \ref{ex:subalgebra of W1}, except this time the subalgebra $\g$ has two poles in $\widetilde{C}$ instead of just one.

\begin{example}\label{ex:easy example}
    Consider the following subalgebra of $W$:
    $$\g \coloneqq \spn\{e_{-2},e_0,e_2,e_4,\ldots\} = \kk[t^2]t^{-1}\del.$$
    Note that the Lie algebra $\g$ has poles at $0$ and $\infty$ in $\widetilde{C} = \PP^1$: for example, the element $e_{-2} = t^{-1}\del \in \g$ has a pole at $0$, while $e_2 = t^3\del \in \g$ has a pole at $\infty$.

    As in Example \ref{ex:subalgebra of W1}, we let $s \coloneqq t^2$. Then $F(\g) = \kk(t^2)$, so $\widetilde{D} \cong \PP^1_s$. Proceeding as in Example \ref{ex:subalgebra of W1} to write $\g$ in terms of $s$, we can easily see that $\g = \kk[s]\del_s = \Der(\kk[s])$. Therefore, $\g$ only has a pole at $\infty$ in $\PP^1_s$. This means that the two poles $0$ and $\infty$ of $\g$ in $\widetilde{C}$ collapse to only one pole $\infty$ in $\widetilde{D}$. In other words, we have
    $$D = \PP^1_s \setminus \{\infty\} = \Aa^1_s.$$
    Note that in this case $\g$ is equal to $\L_D$.
\end{example}

We finish with a more complicated example.

\begin{example}\label{ex:harder example}
    Consider the following subalgebra of $W$:
    $$\g = \spn\{e_1 - e_{-1}, e_2 - e_{-2}, e_3 - e_{-3}, \ldots\}.$$
    Letting $\O_n \coloneqq e_n - e_{-n}$, one can easily check that
    $$[\O_n,\O_m] = (m - n)\O_{n + m} + (n + m)\O_{n - m}.$$
    This Lie algebra has appeared in \cite{BagchiChakrabortyChakraborttyFredenhagenGrumillerPandit, BCCA} in the context of Carrollian physics.
    
    As in Example \ref{ex:easy example}, $\g$ has poles at $0$ and $\infty$ in $\widetilde{C} = \PP^1$: for example, the element $e_2 - e_{-2} = (t^3 + t^{-1})\del \in \g$ has poles at both of these points. It is not too difficult to compute that $F(\g) = \kk(t + t^{-1})$ in this case. As we did in the previous example, let $s \coloneqq t + t^{-1}$, so that $\widetilde{D} = \PP^1_s$ (and thus $\kk(\widetilde{D}) = \kk(\PP^1_s) = \kk(s) = F(\g)$).
    
    We claim that $\g = \kk[s](s^2 - 4)\del_s$. Note that
    \begin{align*}
        (s^2 - 4)\del_s &= \frac{s^2 - 4}{s'}\del = \frac{(t + t^{-1})^2 - 4}{(t + t^{-1})'}\del = \frac{t^2 - 2 + t^{-2}}{1 - t^{-2}}\del = \frac{(1 - t^{-2})(t^2 - 1)}{1 - t^{-2}}\del \\
        &= (t^2 - 1)\del = e_1 - e_{-1} \in \g.
    \end{align*}
    The claim now follows easily by a similar computation. Now, $\g = \kk[s](s^2 - 4)\del_s$ only has a pole at $\infty \in \PP^1_s$. Once again, the two poles in $\widetilde{C}$ have collapsed to a single point in $\widetilde{D}$. Therefore,
    $$D = \PP^1_s \setminus \{\infty\} = \Aa^1_s,$$
    and thus
    $$\L_D = \L_{\Aa^1_s} = \Der(\kk[s]) = \kk[s]\del_s.$$
    We conclude that $\g = \kk[s](s^2 - 4)\del_s$ has finite codimension in $\L_D$ (it has codimension $2$), and that $\g \cong (t^2 - 4)\WW_1$.
\end{example}

\subsection{A subalgebra of vector fields on an elliptic curve}

We conclude this section with one more example in higher genus.

\begin{example}\label{ex:elliptic}
    Let $a, b, c \in \kk$ be pairwise distinct scalars with $a + b + c = 0$, and let $E$ be the (affine part of an) elliptic curve defined by $y^2 = 4(x - a)(x - b)(x - c)$ with the point $(a,0)$ removed. It was shown in \cite[Propositions 3 and 4]{SchlichenmaierDegenerations} that
    $$\L_E = \Der\left(\frac{\kk[x, y, (x - a)^{-1}]}{(y^2 - 4(x - a)(x - b)(x - c))}\right)$$
    is spanned by the vector fields
    $$v_{2n} \coloneqq 2(x - a)^{n - 1}(x - b)(x - c)\del_x, \qquad v_{2n + 1} \coloneqq (x - a)^n y \del_x,$$
    where $n \in \ZZ$. Letting $\alpha \coloneqq 3a$ and $\beta \coloneqq (a - b)(a - c) \in \kk$, the bracket of $\L_E$ is given by
    \begin{equation}\label{eq:elliptic Lie bracket}
        [v_n,v_m] = \begin{cases}
        (m - n)v_{n + m}, &n, m \text{ odd}, \\
        (m - n)(v_{n + m} + \alpha v_{n + m - 2} + \beta v_{n + m - 4}), &n, m \text{ even}, \\
        (m - n)v_{n + m} + (m - n + 1)\alpha v_{n + m - 2} + (m - n + 2)\beta v_{n + m - 4}, &n \text{ even}, m \text{ odd}.
    \end{cases}
    \end{equation}
    Let $\g \coloneqq \spn\{v_n \mid n \in 2\ZZ\}$, which is a subalgebra of $\L_E$ of infinite codimension. It is easy to see that $F(\g) = \kk(x)$, so $\widetilde{D} = \PP^1_x$. Note that $\g$ has poles at $x = a$ and at $x = \infty$ in $\PP^1_x$, so we see that $D = \PP^1_x \setminus \{a,\infty\} \cong \Aa^1 \setminus \{0\}$ and thus $\L_D \cong W$. Under this isomorphism, $\g$ corresponds to the subalgebra $(t + a - b)(t + a - c)W = (t^2 + \alpha t + \beta)W$ of $W$, which has codimension $2$.
\end{example}

\begin{remark}
    One could similarly study subalgebras of \emph{superelliptic algebras}, which are defined by an equation of the form
    $$y^m = \prod_{i = 1}^n (x - \alpha_i)$$
    where $n,m \geq 2$ and $\alpha_i \in \kk$ are pairwise distinct. Their Lie algebras of vector fields were studied in \cite{CoxGuoLuZhao}: for small values of $n$ and $m$, it should be possible to get an explicit Lie algebra structure like in \eqref{eq:elliptic Lie bracket}.
\end{remark}

\section{Poles in \texorpdfstring{$\widetilde{C}$}{C~} and in \texorpdfstring{$\widetilde{D}$}{D~}}\label{sec:relating poles in C and D}

We now start working toward a proof of Theorem~\ref{thm:main}. Since $\kk(\widetilde{D}) = F(\g) \subseteq \kk(\widetilde{C})$, we get a dominant (and therefore surjective) morphism $\pi \colon \widetilde{C} \to \widetilde{D}$. Also, note that any derivation of $\kk(\widetilde{D})$ extends uniquely to a derivation of $\kk(\widetilde{C})$, so we view $\Q_{\widetilde{D}}$ as a Lie subalgebra of $\Q_{\widetilde{C}}$. We now prove a result relating the degree of an element in $\Q_{\widetilde{D}}$ to its degree when viewed as an element of $\Q_{\widetilde{C}}$, via the ramification of the map $\pi$.

\begin{lemma}\label{lem:ramification}
    Let $q \in \widetilde{C}$, define $p \coloneqq \pi(q) \in \widetilde{D}$, and let $e$ be the ramification index of $\pi$ at $q$. Then for all $v \in \Q_{\widetilde{D}}$, we have
    $$\ord_q(v) = e(\ord_p(v) - 1) + 1, \text { or equivalently, } \deg_q(v) = e\deg_p(v).$$
\end{lemma}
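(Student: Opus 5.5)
This is a local computation in completed local rings. Choose a uniformizer $s$ at $p \in \widetilde{D}$ and a uniformizer $t$ at $q \in \widetilde{C}$. Since $\pi$ has ramification index $e$ at $q$, the pullback of $s$ satisfies $s = t^e u$ in $\widehat{\O}_{\widetilde{C},q} = \kk[[t]]$, with $u$ a unit. Because $\kk$ is algebraically closed of characteristic zero, $u$ has an $e$-th root in $\kk[[t]]$ (Hensel's lemma). Replacing $t$ by $t u^{1/e}$, which is again a uniformizer at $q$, we may therefore assume $s = t^e$ exactly. This normalization is the only slightly delicate step. Even without it, the computation below goes through, because only leading terms matter.

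Now let $v \in \Q_{\widetilde{D}}$ and write $v = f\del_s$ locally at $p$, where $f \in \kk((s))$ has $\ord_s(f) = \ord_p(v) =: n$. Since $\Q_{\widetilde{D}}$ is viewed inside $\Q_{\widetilde{C}}$ via the unique extension of derivations, we need $\del_s$ in terms of $\del_t$. Uniqueness of extension to the completions (formally étale extension $\kk((s)) \subseteq \kk((t))$ in characteristic zero) gives
$$\del_s = \frac{1}{\del_t(s)}\del_t = \frac{1}{e t^{e-1}}\del_t.$$
The extension of $v$ is the derivation agreeing with $v$ on $\kk(\widetilde{D})$, and both sides agree on $s$, so this identity holds.

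Hence, locally at $q$,
$$v = \frac{f(t^e)}{e\,t^{e-1}}\,\del_t.$$
The numerator $f(t^e)$ has $t$-order $en$, so $\ord_q(v) = en - (e-1) = e(\ord_p(v) - 1) + 1$. Finally,
$$\deg_q(v) = 1 - \ord_q(v) = e(1 - \ord_p(v)) = e\deg_p(v).$$

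The main point needing care is justifying that the global identification of $\Q_{\widetilde{D}}$ inside $\Q_{\widetilde{C}}$ is compatible with the local expressions in the completions. I would handle this by noting that derivations of $\kk(\widetilde{C})$ extend continuously to $\kk((t))$, and the relation $v = \frac{v(s)}{\del_t(s)}\del_t$ holds for any derivation since $\Q_C$ is one-dimensional (Lemma~\ref{lem:Q_C is one-dimensional}). Taking orders then gives $\ord_q(v) = \ord_q(v(s)) - \ord_q(\del_t s) = e\,\ord_p(v(s)) - (e-1)$, and $\ord_p(v(s)) = \ord_p(v)$ because $v = v(s)\del_s$. This avoids the root-extraction normalization entirely, and I would likely present this version.
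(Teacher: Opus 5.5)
Your proposal is correct, and your first version is the paper's proof: normalize the uniformizers so that $s = t^e$, use the chain rule $\del_s = \frac{1}{\del_t(s)}\del_t$, and read off $\ord_q(v) = e\ord_p(v) - (e-1)$ from $f(t^e)/(e\,t^{e-1})$. The variant you say you would present is the same computation without the $e$-th root normalization; it needs only $\ord_q(v(s)) = e\,\ord_p(v(s))$ and $\ord_q(\del_t s) = e-1$ (from $\del_t(t^e u) = t^{e-1}(eu + t\del_t u)$, using $e \neq 0$ in $\kk$), and it is slightly cleaner than the paper's version.
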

\begin{proof}
    Let $v \in \Q_{\widetilde{D}}$. Choose uniformizers $t$ at $q \in \widetilde{C}$ and $s$ at $p \in \widetilde{D}$ such that $s = t^e$. Locally at $p$, the vector field $v$ has the form $f(s) \del_s$, where $f(s) \in \kk((s))$. Therefore, when viewed as an element of $\Q_{\widetilde{C}}$, the vector field $v$ has the following form (locally at the point $q$):
    $$v = f(t^e) \del_s = f(t^e) \frac{1}{\del_t(s)} \del_t = f(t^e) \frac{1}{e t^{e - 1}} \del_t = \frac{1}{e} f(t^e) t^{1 - e} \del_t,$$
    where we used that $\del_s = \frac{1}{\del_t(s)} \del_t$ by the chain rule. Therefore,
    $$\ord_q(v) = e \ord_p(v) + 1 - e = e(\ord_p(v) - 1) + 1.$$
    The result now follows upon recalling that $\deg_p(v) = 1 - \ord_p(v)$ by definition.
\end{proof}

Using Lemma~\ref{lem:ramification}, it is easy to prove that any preimage of a pole of $\g$ in $\widetilde{D}$ is a pole in $\widetilde{C}$.

\begin{lemma}\label{lem:poles in D come from poles in C}
    Let $p \in \widetilde{D} \setminus D$. Then $\pi^{-1}(p) \subseteq \widetilde{C} \setminus C$.
\end{lemma}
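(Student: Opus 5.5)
Let $p \in \widetilde{D} \setminus D$. By Definition~\ref{def:curves associated to g}, there is some $w \in \g$ with $\ord_p(w) < 0$, that is, $\deg_p(w) \geq 2$. The plan is to transport this pole at $p$ to a pole at every point of the fibre over $p$, using Lemma~\ref{lem:ramification}.

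Fix $q \in \pi^{-1}(p)$ and let $e \geq 1$ be the ramification index of $\pi$ at $q$. We may apply Lemma~\ref{lem:ramification} to $w$: since $\g \subseteq \Der(F(\g)) = \Q_{\widetilde{D}}$ by Lemma~\ref{lem:g acts on F(g)}, we may view $w$ as an element of $\Q_{\widetilde{D}}$. Its image under the canonical extension to $\Q_{\widetilde{C}}$ is the original vector field in $\L_C$. The lemma gives
$$\deg_q(w) = e\deg_p(w) \geq 2e \geq 2.$$
Hence $w$ has a pole at $q$, as a vector field on $\widetilde{C}$.

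It remains to conclude that $q \notin C$. Since $w \in \g \subseteq \L_C = \Der(\O_C)$, the vector field $w$ is regular at every point of $C$. Concretely, at $q \in C$ with uniformizer $t \in \O_{C,q}$, writing $w = f\del_t$ we have $f = w(t) \in \O_{C,q}$, so $\ord_q(w) \geq 0$. This contradicts the pole just found, so $q \in \widetilde{C} \setminus C$. As $q$ was an arbitrary point of the fibre, $\pi^{-1}(p) \subseteq \widetilde{C} \setminus C$.

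The only step needing care is the compatibility of identifications: the element of $\Q_{\widetilde{D}}$ and the element of $\L_C \subseteq \Q_{\widetilde{C}}$ must be the same vector field $w$. This holds because a derivation of $F(\g)$ extends uniquely to $\kk(C)$, which is a finite separable extension. Everything else is immediate from Lemma~\ref{lem:ramification}.
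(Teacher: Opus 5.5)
Your proof is correct and follows essentially the same route as the paper: you choose $w\in\g$ with a pole at $p$ and use Lemma~\ref{lem:ramification} to show that $w$ has a pole at every $q\in\pi^{-1}(p)$. The paper phrases this as $\ord_q(w)=e(\ord_p(w)-1)+1<0$ and you as $\deg_q(w)=e\deg_p(w)\geq 2$; both then conclude $q\notin C$ because $w\in\L_C$, and your remark on identifying $w\in\Q_{\widetilde{D}}$ with its unique extension in $\Q_{\widetilde{C}}$ makes explicit a detail the paper leaves implicit.
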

\begin{proof}
    Since $p \in \widetilde{D} \setminus D$, there exists $v \in \g$ such that $v$ has a pole at $p$. In other words, $\ord_p(v) < 0$. Let $q \in \pi^{-1}(p)$. Choose uniformizers $t$ at $q \in \widetilde{C}$ and $s$ at $p \in \widetilde{D}$ such that $s = t^e$, where $e$ is the ramification index of $\pi$ at $q$. By Lemma~\ref{lem:ramification}, we have
    $$\ord_q(v) = e(\ord_p(v) - 1) + 1 < 0,$$
    since $\ord_p(v) - 1 \leq -2$ by the assumption that $v$ has a pole at $p$. Therefore, $v$ has a pole at $q$. Since $v \in \L_C$, it must be the case that $q \notin C$, which concludes the proof.
\end{proof}

\begin{remark}
    It is not necessarily true that if $q \in \widetilde{C} \setminus C$ then $\pi(q) \in \widetilde{D} \setminus D$. For an explicit example, see Example \ref{ex:easy example}, where $\pi(0) = 0$ with $0 \in \widetilde{C} \setminus C$ but $0 \notin \widetilde{D} \setminus D$.
\end{remark}

Lemma~\ref{lem:poles in D come from poles in C} has the following consequence.

\begin{corollary}\label{cor:C maps to D}
    The morphism $\pi \colon \widetilde{C} \to \widetilde{D}$ restricts to a dominant morphism of affine curves $\pi' \colon C \to D$.
\end{corollary}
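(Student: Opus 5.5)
The plan is to show that $\pi$ maps $C$ into $D$; dominance and being a morphism of affine curves then follow for free. Since $\pi \colon \widetilde{C} \to \widetilde{D}$ is already a morphism, its restriction to the open subset $C \subseteq \widetilde{C}$ is a morphism $C \to \widetilde{D}$. It then suffices to show that this restriction lands in the open subset $D \subseteq \widetilde{D}$.

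For this I would argue by contraposition using Lemma~\ref{lem:poles in D come from poles in C}. Let $q \in C$ and suppose $p \coloneqq \pi(q) \notin D$, that is, $p \in \widetilde{D} \setminus D$. Then Lemma~\ref{lem:poles in D come from poles in C} gives $\pi^{-1}(p) \subseteq \widetilde{C} \setminus C$. But $q \in \pi^{-1}(p)$ and $q \in C$, which is a contradiction. Hence $\pi(C) \subseteq D$, and $\pi' \coloneqq \restr{\pi}{C}$ is a morphism $C \to D$. Equivalently, on coordinate rings the inclusion $F(\g) \subseteq \kk(C)$ restricts to a map $\O_D \to \O_C$. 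This holds because $\O_D = \bigcap_{p \in D} \O_{\widetilde{D},p}$, and for any $q \in C$ we have $\pi(q) \in D$, so every $f \in \O_D$ is regular at every point of $C$.

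For dominance, $C$ is a nonempty open and hence dense subset of the irreducible curve $\widetilde{C}$, and $\pi$ is surjective (being a nonconstant morphism of smooth projective curves). So $\pi(C)$ is the image of a dense set under a continuous surjection, and is therefore dense in $\widetilde{D}$. In particular it is dense in $D$. Alternatively, $\pi(C)$ is an infinite subset of the irreducible curve $D$, hence dense.

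The only point needing any care is confirming that $D$ is an open subset of $\widetilde{D}$, so that $D$ is a genuine affine curve and the restriction makes sense. This amounts to noting that $\widetilde{D} \setminus D$ is finite. That is true because $\g$ is preserved by $\O_D$-regularity at all but the poles of finitely many generators. More simply, any single nonzero $v \in \g$ has only finitely many poles, and $\widetilde{D} \setminus D$ is contained in the preimage of the finite set $\widetilde{C} \setminus C$, by Lemma~\ref{lem:poles in D come from poles in C} and surjectivity of $\pi$. So $\widetilde{D}\setminus D$ is finite and nonempty, and $D$ is affine. I expect no real obstacle beyond this bookkeeping.
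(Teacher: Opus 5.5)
Your proof is correct and matches the paper's: the key step is that Lemma~\ref{lem:poles in D come from poles in C} forces $\pi(C)\subseteq D$, and dominance then follows from density of $C$ and surjectivity of $\pi$. In your finiteness remark, $\widetilde{D}\setminus D$ lies in the \emph{image} $\pi(\widetilde{C}\setminus C)$, not a preimage; also, nonemptiness of $\widetilde{D}\setminus D$ needs its own justification, for instance that $\g$ is infinite-dimensional while $H^0(\widetilde{D},\T_{\widetilde{D}})$ is finite-dimensional.
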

\begin{proof}
    Lemma~\ref{lem:poles in D come from poles in C} guarantees that $\pi(C) \subseteq D$.
\end{proof}

\begin{example}
    Let $E$ and $\g \subseteq \L_E$ be as in Example \ref{ex:elliptic}. In this case the map $\pi$ from Corollary~\ref{cor:C maps to D} is
    \begin{align*}
        \pi \colon E &\to \Aa^1 \setminus \{a\} \\
        (x,y) &\mapsto x.
    \end{align*}
\end{example}

The rest of this section is devoted to proving the following result.

\begin{proposition}\label{prop:g has almost all poles}
    We have
    $$d_p(\g) = \gcd\{\deg_p(v) \mid v \in \g\} = 1$$
    for all $p \in \widetilde{D} \setminus D$.
\end{proposition}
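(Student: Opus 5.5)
The plan is to argue by contradiction. Fix $p \in \widetilde{D} \setminus D$ and suppose $d \coloneqq d_p(\g) > 1$. Since $\kk(\widetilde{D}) = F(\g)$, there is a uniformizer $s$ at $p$ lying in $F(\g)$, and $\ord_p(s) = 1$ is not divisible by $d$. So it suffices to show that $d \mid \ord_p(f)$ for every $f \in F(\g)$.

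\textbf{Why the easy argument is not enough.} For a ratio $u/v \in R(\g)$ we have $\ord_p(u/v) = \deg_p(v) - \deg_p(u) \in d\ZZ$. This divisibility survives products and quotients. It does not survive sums, because cancellation of leading terms can raise the order to something not divisible by $d$. So valuations alone will not suffice. Instead I will prove the stronger statement that, after a formal change of coordinate at $p$, all of $R(\g)$ lies in a subfield $\kk((z^d)) \subseteq \kk((s)) = \widehat{\kk(\widetilde{D})}_p$. Since that subfield is closed under all field operations and its nonzero elements have $z$-orders in $d\ZZ$, while $z$ and $s$ have the same order at $p$, this contains the claim.

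\textbf{Step 1: a good basis.} Since $p \notin D$, some $v \in \g$ has $\deg_p(v) = n \geq 2$, and $d \mid n$. Corollary~\ref{cor:gcd of degrees} requires two elements of different $p$-degree; I expect to get these by bracketing $v$ with another element of $\g$, using that $\g$ is infinite-dimensional. Combined with Lemma~\ref{lem:properties of deg(g)}\eqref{item:decrease equal degree}, this should give elements $w_\ell \in \g$ with $\deg_p(w_\ell) = \ell d$ for all large $\ell$. It should also show that, modulo a finite-dimensional piece, $\g$ is spanned by elements with pairwise distinct $p$-degrees, all lying in $d\ZZ$.

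\textbf{Step 2: normalizing $v$.} Choose a formal coordinate $z$ at $p$ in which $v$ is in normal form, $v = c\, z^{1-n}(1+\lambda z^{n-1})^{-1}\del_z$ or a similar standard normal form for a formal vector field with a pole of order $n-1$.

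\textbf{Step 3: the main obstacle.} The key step is to show that every $w \in \g$, written locally as $w = g(z)\del_z$, has $g(z) \in z\,\kk((z^d))$, i.e.\ $w \in \kk((z^d))\, z\del_z$. I would proceed by contradiction. Write $g$ as a sum of a part in $z\kk((z^d))$ and a remainder. Suppose the remainder has a lowest term $z^{j}$ with $1-j \notin d\ZZ$. Apply $\ad_v$ repeatedly and subtract suitable combinations of the $w_\ell$ from Step 1, which clears the earlier terms in $z\kk((z^d))$. This should produce an element of $\g$ whose $p$-degree is not divisible by $d$, contradicting $d = d_p(\g)$. The leading-term computation of Lemma~\ref{lem:degrees form a semigroup} controls the degrees. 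The delicate point is the correction term $\lambda$ in the normal form, since $n-1$ need not be divisible by $d$. I expect to handle it by an induction on degree, absorbing such terms into the choice of $z$.

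\textbf{Conclusion.} Once Step 3 holds, every ratio $u/v$ is a quotient of two elements of $z\kk((z^d))$, hence lies in $\kk((z^d))$. Therefore $F(\g) \subseteq \kk((z^d))$, so every nonzero $f \in F(\g)$ has $\ord_p(f) \in d\ZZ$. This contradicts $s \in F(\g)$, and so $d_p(\g) = 1$.
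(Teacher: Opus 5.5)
\textbf{The architecture is right; the gap is Step 3.} Your plan follows the paper's own structure. Normalize a pole element $v$ at $p$ (Lemma~\ref{lem:Hensel}). Show that $\g$ lies in the complete Veronese $V_d(r)=\kk((r^{-d}))r\del_r$, which is your $\kk((z^d))z\del_z$ with $z=r^{-1}$. Then conclude that $F(\g)\subseteq\kk((r^{-d}))$ cannot contain a uniformizer at $p$ unless $d=1$. Step 3, however, is where all the content lies. The paper does not prove it by hand; it imports it as Lemma~\ref{lem:contained in Veronese} (\cite[Corollary 3.5]{BellBuzaglo}) and applies it in Corollary~\ref{cor:contained in Veronese}.

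\textbf{Why your mechanism for Step 3 fails.} Applying $\ad_v$ shifts the top ``good'' term (exponent $\equiv 1 \bmod d$) and the top ``bad'' term by the same amount, so it never separates them. Subtracting the $w_\ell$ to clear good leading terms brings in the bad parts of the $w_\ell$, about which you know nothing. Controlling those bad parts is exactly the statement of Step 3, so the argument is circular.

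Here is a concrete obstruction. Take $d=2$, $v=z^{-1}\del_z$ and $w=(z^{-3}+1)\del_z\notin\kk((z^2))z\del_z$. Since $\ad_v(z^j\del_z)=(j+1)z^{j-2}\del_z$, the space $S=\spn\{v,\ad_v^N(w)\mid N\geq 0\}$ has three properties:
\begin{enumerate}
\item it is $\ad_v$-stable;
\item it contains elements of every even $p$-degree $\geq 2$;
\item every nonzero element of $S$ has even $p$-degree, because its leading term comes from the good part of the $\ad_v^N(w)$ with largest $N$.
\end{enumerate}
Every operation your Step 3 uses can be carried out inside $S$: brackets with $v$, subtraction of elements $w_\ell$ of prescribed degrees $\ell d$, and leading-term control via Lemma~\ref{lem:degrees form a semigroup}. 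So that procedure can never produce an element of odd degree.

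Odd degrees do appear in the Lie algebra generated by $v$ and $w$, but only through brackets of two elements that both carry bad parts. For instance, $12[w,\ad_v w]+\ad_v^3(w)=(135z^{-6}-24z^{-3})\del_z$ has $p$-degree $7$. This works because $[w,\ad_v w]=(4z^{-9}+11z^{-6}-2z^{-3})\del_z$ and $\ad_v^3(w)=(-48z^{-9}+3z^{-6})\del_z$ share the same top good and top bad degrees but have different ratios of leading coefficients. A correct proof of Step 3 needs an argument of this kind, such as an extremal argument on the gap between top good and top bad degrees with a comparison of such ratios. Alternatively, you can simply cite Lemma~\ref{lem:contained in Veronese}, as the paper does.

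\textbf{A smaller point about Step 2.} The ``delicate point'' there is not real. Since $v=g(z)\del_z$ has a pole, the form $dz/g$ is regular and vanishes to order $n-1\geq 1$, so it is exact and has no residue invariant. Hence $v$ can be made exactly monomial, $v=c\,u^{1-n}\del_u$, in a suitable formal coordinate $u$. This is what Lemma~\ref{lem:Hensel} provides, so no correction term $\lambda$ is needed.
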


Proposition~\ref{prop:g has almost all poles} is the analog of \cite[Proposition 3.1]{BellBuzaglo} in our more general setup. Since the proof of \cite[Proposition 3.1]{BellBuzaglo} uses local arguments, we will prove Proposition~\ref{prop:g has almost all poles} in a similar way. For this reason, we omit many details and refer the reader to \cite{BellBuzaglo}. First, we require the following Hensel lemma argument analogous to \cite[Lemma 3.2]{BellBuzaglo}.

\begin{lemma}\label{lem:Hensel}
    Let $p \in \widetilde{D} \setminus D$, let $s$ be a uniformizer at $p$, and let $z = s^{-1}$. Given an element $v \in \g$ with a pole at $p$, there exist $r \in \kk((z^{-1}))$ with $r = z + \mathrm{lower~degree~terms}$ and $\alpha \in \kk^*$ such that
    $$v = \alpha r^d \del_r = \frac{\alpha r^d}{\del_z(r)}\del_z.$$
\end{lemma}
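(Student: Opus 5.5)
The idea is a formal change of coordinate at $p$ that puts $v$ into the normal form $\alpha r^d\del_r$. Here $d \coloneqq \deg_p(v) \geq 2$. We work in the completed local field at $p$. Since $z = s^{-1}$, we have $\kk((s)) = \kk((z^{-1}))$, and we say an element has degree $n$ if its leading term is a nonzero multiple of $z^n$.

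First we write $v$ in terms of $z$. Since $\del_s = -z^2\del_z$, the statement $\deg_p(v) = d$ says that locally
$$v = f\,\del_z, \qquad f = \alpha z^d(1 + h),$$
where $\alpha \in \kk^*$ and $h \in z^{-1}\kk[[z^{-1}]]$. We look for $r = z(1+g)$ with $g \in z^{-1}\kk[[z^{-1}]]$ such that $v = \alpha r^d \del_r$. Since $\del_r = \frac{1}{\del_z(r)}\del_z$, this is equivalent to the equation
$$\frac{\del_z(r)}{r^d} = \frac{1}{f}.$$
The right side is $\frac{1}{f} = \alpha^{-1} z^{-d}(1 + \tilde h)$ with $\tilde h \in z^{-1}\kk[[z^{-1}]]$. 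Because $d \geq 2$, we can rewrite the left side as
$$\frac{\del_z(r)}{r^d} = -\frac{1}{d-1}\,\del_z\bigl(r^{1-d}\bigr).$$
So it suffices to find an antiderivative $G$ of $1/f$ of the form $G = -\frac{1}{\alpha(d-1)} z^{1-d}(1 + k)$ with $k \in z^{-1}\kk[[z^{-1}]]$. We then set $r^{1-d} = \alpha^{-1}(1+k)z^{1-d}$.

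The main obstacle is integrating $1/f$ term by term. Formal integration works on each term $z^{m}$ with $m \neq -1$. The term $z^{-1}$ of $1/f$ has no antiderivative in $\kk((z^{-1}))$; it would produce a logarithm. So we must show that this residue coefficient vanishes, i.e.\ $\operatorname{res}_p(1/f)\,dz = 0$. A general series $f$ has no reason to satisfy this, so we need to use that $v \in \g$ and that $p$ is a pole. We would try to extract this from the structure of $\g$, using Lemma~\ref{lem:degrees form a semigroup} and Corollary~\ref{cor:gcd of degrees} to produce elements of $\g$ of various degrees at $p$, as in \cite[Lemma 3.2]{BellBuzaglo}. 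If that cannot be forced, the fallback is to weaken the normal form to $v = \alpha r^d\del_r/(1+\lambda r^{d-1})$, the standard formal normal form of a vector field with a pole. We suspect that in the paper's intended argument the relevant coefficient is in fact absorbed by allowing that correction.

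Assuming the residue condition, we finish as follows. The element $(1+k)/\alpha$ is a unit of the form $\alpha^{-1}(1 + \text{lower terms})$. Since $\kk$ is algebraically closed of characteristic zero, the binomial series gives a $(1-d)$-th root of $1+k$ in $1 + z^{-1}\kk[[z^{-1}]]$. We also choose a scalar $c$ with $c^{1-d} = \alpha^{-1}$. This lets us define $r = cz(1 + \text{lower terms})$ with $r^{1-d} = \alpha^{-1}(1+k)z^{1-d}$.

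Rescaling $\alpha$ lets us take $c = 1$, so that $r = z + \text{lower degree terms}$ as required. Differentiating $r^{1-d} = G$ then recovers $\del_z(r)/r^d = 1/f$, which gives $v = \alpha r^d\del_r = \frac{\alpha r^d}{\del_z(r)}\del_z$.
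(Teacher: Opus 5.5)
You leave the vanishing of the $z^{-1}$ coefficient of $1/f$ as an unproved assumption, and you even suggest the statement may need weakening. That is the one genuine gap in your write-up. But the condition is automatic, and you had already written down why. Since $1/f = \alpha^{-1}z^{-d}(1+\tilde h)$ with $\tilde h \in z^{-1}\kk[[z^{-1}]]$, every power of $z$ in $1/f$ has exponent at most $-d \le -2$. So $z^{-1}$ never occurs, and term-by-term integration goes through. Geometrically, $dz/f = -\alpha^{-1}s^{d-2}(1+h)^{-1}\,ds$ is regular at $p$ precisely because $v$ has a \emph{pole} there, so it has no residue.

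Consequently nothing about $\g$ is needed: the lemma holds for any formal vector field with a pole at $p$. Your fallback form is modeled on the normal form $x^k\del_x/(1+\lambda x^{k-1})$ of a vector field with a \emph{zero} of order $k$. There $dx/v$ genuinely has a pole and a residue; that form has no place here.

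There are also two smaller slips. First, because $\del_s = -z^2\del_z$, the leading exponent of $f$ is $d = \deg_p(v)+1 \geq 3$, not $\deg_p(v)$. Second, $v = \alpha r^d\del_r$ is equivalent to $\del_z(r)/r^d = \alpha/f$, not $1/f$. If you keep $\alpha$ there, integration (taking the constant of integration to be $0$) gives $r^{1-d} = z^{1-d}(1+k)$ with $k \in z^{-1}\kk[[z^{-1}]]$. Hence $r = z(1+k)^{-1/(d-1)} = z + \mathrm{lower~degree~terms}$ directly, with no auxiliary scalar $c$.

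Once this is supplied, your argument is complete, and it takes a different route from the paper's. The paper, following \cite[Lemma 3.2]{BellBuzaglo}, builds $r$ by successive approximation: $r_0 = z$ and $r_{n+1} = r_n - \frac{\beta}{\alpha(n+d)}z^{-n}$. Each step removes one discrepant coefficient between $v(z)$ and $\alpha r_n^d/\del_z(r_n)$. The only thing to check is $n + d \neq 0$, and that is where your residue condition is hiding: for a zero of order $k$ one would have $d = 2-k \le 0$, and the denominator vanishes at $n = -d$.

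Your route gives $r$ in closed form, as a $(1-d)$-th root of $(1-d)\int \alpha\,dz/f$. This explains conceptually why there is no obstruction, at the cost of needing $d \ge 2$ and a binomial root. The paper's iteration is more elementary and works uniformly for all $d \geq 1$.
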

\begin{proof}
    Locally at $p$, we can write $v = v(z) \del_z$ as a derivation of $\kk((z^{-1}))$. Note that, since $v$ has a pole at $p$, we have $v(z) = \alpha z^d + \mathrm{lower~degree~terms} \in \kk((z^{-1}))$ for some $\alpha \in \kk^*$ and some positive integer $d$.

    The rest of the proof is identical to the proof of \cite[Lemma 3.2]{BellBuzaglo}: we can choose $r_0 = z$ and $r_{n + 1} = r_n - \frac{\beta}{\alpha (n + d)} z^{-n}$ for all $n \in \NN$.
\end{proof}

Next, we recall the definition of the complete Veronese subalgebras from \cite[Notation 3.3]{BellBuzaglo} and a result from \cite{BellBuzaglo}.

\begin{definition}
    Let $r \in \kk((z^{-1}))$ such that $r = z + \mathrm{lower~degree~terms}$, and let $d$ be a positive integer. The \emph{complete $d$-Veronese subalgebra with parameter $r$} is the subalgebra $V_d(r) \coloneqq \kk((r^{-d}))r\del_r$ of $\kk((z^{-1}))\del_z$.
\end{definition}

\begin{lemma}[{\cite[Corollary 3.5]{BellBuzaglo}}]\label{lem:contained in Veronese}
    Let $\h$ be an infinite-dimensional subalgebra of $\kk((z^{-1}))\del_z$, and let $r = z + \mathrm{lower~degree~terms} \in \kk((z^{-1}))$. Then $\h \cap V_d(r) \neq 0$ if and only if $\h \subseteq V_d(r)$.
\end{lemma}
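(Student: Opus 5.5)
Since this is \cite[Corollary 3.5]{BellBuzaglo}, I would follow that argument. The reverse implication is trivial, since an infinite-dimensional $\h$ is nonzero. For the forward implication, I first change coordinates. Because $r = z + \text{lower degree terms}$, we have $\kk((z^{-1})) = \kk((r^{-1}))$ and $\del_z = \del_z(r)\del_r$, so $\kk((z^{-1}))\del_z = \kk((r^{-1}))\del_r$. Degrees at infinity are unchanged by this substitution, so I may assume $r = z$. Then $V_d(z) = \kk((z^{-d}))z\del_z$ is the closure of the span of the $e_n \coloneqq z^{n+1}\del_z$ with $d \mid n$.

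The main structural tool is the automorphism $\sigma$ of $\kk((z^{-1}))\del_z$ induced by $z \mapsto \zeta z$, where $\zeta$ is a primitive $d$-th root of unity. It satisfies $\sigma(e_n) = \zeta^n e_n$. Thus $V_d(z)$ is exactly the fixed subalgebra of $\sigma$, and $\kk((z^{-1}))\del_z = \bigoplus_{j \in \ZZ/d} M_j$ is a $\ZZ/d$-grading of Lie algebras with $M_0 = V_d(z)$.

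Now fix $0 \neq w \in \h \cap V_d(z)$, with $\deg(w) = md$. If $m \neq 0$, I would run the Hensel-type construction of Lemma~\ref{lem:Hensel} inside $V_d(z)$. The aim is $w = \alpha \rho^{md+1}\del_\rho$ for some $\rho = z + \text{lower degree terms}$ with $\rho \in z\kk((z^{-d}))$. The correction terms only involve exponents compatible with the grading, so $V_d(\rho) = V_d(z)$ and I may rename $\rho$ as $z$. If $m = 0$, the same argument gives $w = \alpha z\del_z$.

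In either case $\ad w$ acts on $e_n$ by $\alpha(n - md)e_{n+md}$. It preserves each $M_j$, and it is injective on $M_j$ for $j \neq 0$, because $n - md \neq 0$ whenever $d \nmid n$.

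Suppose, for contradiction, that some $u \in \h$ has a nonzero component outside $M_0$. Among such components I take the one of largest degree, say $u_j \in M_j$ with $j \neq 0$ and leading term of degree $n$, $d \nmid n$. Applying $(\ad w)^k$ shifts each homogeneous component's degree by $kmd$ with nonzero leading coefficient, so $\h$ contains elements whose non-$M_0$ part has leading degree $n + kmd$. Infinite-dimensionality of $\h$ gives elements of $\h$ of arbitrarily large (or, when $m = 0$, arbitrarily many distinct) degrees. Bracketing these with $u$ and using Lemma~\ref{lem:degrees form a semigroup} together with Corollary~\ref{cor:gcd of degrees}, I would produce two elements of $\h$ whose leading $\ZZ/d$-components live in different graded pieces. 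The goal is to show that $\h$ then contains elements of every sufficiently large degree in some residue class $n + \ell d'$ with $d' < d$.

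The contradiction should come from comparing two such elements of equal degree, by Lemma~\ref{lem:properties of deg(g)}\eqref{item:decrease equal degree}. This step is the main obstacle: showing that a subalgebra containing a nonzero element of $M_0$ cannot contain a "mixed" element. Concretely, I would bracket $u$ against $(\ad w)^k u$ for two different values of $k$. I would then track the coefficient of the top non-$M_0$ degree in the resulting expression, which is a polynomial in $n$, $m$, $k$, and show it is nonzero. That would force $\h$ to contain elements of degree $\equiv n \pmod d$ alongside the $M_0$-degrees, and I would expect an infinite descent in the leading coefficients, as in \cite{BellBuzaglo}, to rule this out.
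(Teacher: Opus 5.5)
\textbf{There is a genuine gap, and as the statement is written it cannot be closed.} With $d$ an arbitrary positive integer unrelated to $\h$, the forward implication is false.

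Take $r=z$, $d=2$ and $\h=\kk[z]\del_z$. This is an infinite-dimensional subalgebra of $\kk((z^{-1}))\del_z$. It contains $w=z^3\del_z=z^2\cdot z\del_z\in V_2(z)$, which is already in your normal form with $m=1$. It also contains $u=z^2\del_z\in M_1$. So $\h\cap V_2(z)\neq 0$ but $\h\not\subseteq V_2(z)$.

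The step you flag as the main obstacle is therefore not merely unproved: the contradiction you are looking for does not exist. Everything your sketch aims to produce actually occurs in $\kk[z]\del_z$: elements of all large degrees in several residue classes mod $d$, and elements of equal degree whose leading components lie in different $M_j$. Infinite-dimensionality gives you nothing here.

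\textbf{What is missing} is a hypothesis tying $d$ to $\h$: you need $d\mid\deg(x)$ for every nonzero $x\in\h$. The application supplies exactly this, since in Corollary~\ref{cor:contained in Veronese} one has $d=d_p(\g)$, and this is the setting in which the cited result of \cite{BellBuzaglo} is used. The version transcribed here omits it. With this hypothesis, the target becomes concrete: exhibit an element of $\h$ whose leading term lies in some $M_j$ with $j\neq 0$.

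\textbf{Even then, your sketch does not reach that target.} The operator $\ad w$ shifts the $M_0$-part and the non-$M_0$-part of $u$ by the same amount $md$. So the difference between their top degrees never shrinks. You have to actually cancel the finitely many $M_0$-terms of $u$ lying above its top non-$M_0$ term.
\begin{itemize}
\item When $m=0$ this is easy. Here $\ad w$ acts diagonally by $e_k\mapsto \alpha k e_k$. Let $b$ be the top non-$M_0$ degree of $u$, and choose a polynomial $P$ vanishing at the finitely many relevant $\alpha k$ but not at $\alpha b$. Then $P(\ad w)u\in\h$ has degree $b\notin d\ZZ$.
\item When $m>0$, $\ad w$ raises degrees and has no nonzero eigenvalues, so this polynomial trick is unavailable. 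A genuine cancellation argument is needed, and this is precisely what the argument of \cite{BellBuzaglo} must supply. Your proposal replaces it with an expectation of ``infinite descent''.
\end{itemize}

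\textbf{A further problem is the normalization of $w$.} Your Hensel-type normalization is only justified for $m\geq 0$. The recursion of Lemma~\ref{lem:Hensel}, at the step correcting by $z^{-j}$, divides by $j+md+1$, which vanishes at $j=-md-1$ when $m<0$. For example, $(z^{-1}+z^{-3})\del_z\in V_2(z)$ is not of the form $\alpha\rho^{-1}\del_\rho$ with $\rho=z+\text{lower degree terms}$. That would force $\tfrac12(\rho^2)'=z-z^{-1}+\cdots$, which is impossible because a derivative of a Laurent series in $z^{-1}$ has no $z^{-1}$ term.
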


The following result, which shows that $\g$ is contained in some complete Veronese subalgebra, is the last ingredient in the proof of Proposition~\ref{prop:g has almost all poles}.

\begin{corollary}\label{cor:contained in Veronese}
    Let $p \in \widetilde{D} \setminus D$ and write $d \coloneqq d_p(\g)$. Then $\g \subseteq V_d(r)$ for some $r = z + \mathrm{lower~degree~terms} \in \kk((z^{-1}))$.
\end{corollary}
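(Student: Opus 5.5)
Fix $p \in \widetilde{D} \setminus D$, a uniformizer $s$ at $p$, and $z = s^{-1}$. Via the completion at $p$, view $\g$ as a subalgebra of $\kk((z^{-1}))\del_z$; this map is injective because $\g \subseteq \Q_{\widetilde{D}}$ and $\kk(\widetilde{D})$ embeds in its completion $\kk((s))$. The subalgebra $\g$ is infinite-dimensional, so it will suffice to produce a single nonzero element of $\g \cap V_d(r)$ for a suitable $r$. Lemma~\ref{lem:contained in Veronese} then gives $\g \subseteq V_d(r)$.

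Since $p \notin D$, some $v \in \g$ has a pole at $p$, so $\deg_p(v) \geq 2$. Set $m \coloneqq \deg_p(v)$. In the coordinate $z$ the leading term of $v$ is a nonzero multiple of $z^{m}$: indeed, with $\del_s = -z^2\del_z$, an element of order $1-m$ in $s$ becomes leading $z^{m-1}\cdot z^{2}\cdot z^{-1}$ up to sign, so the exponent works out to $m$. Apply Lemma~\ref{lem:Hensel} to $v$ to obtain $r = z + \text{lower degree terms}$ and $\alpha \in \kk^*$ with $v = \alpha r^{m}\del_r$, where the exponent in Lemma~\ref{lem:Hensel} equals $\deg_p(v)$. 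Then $v = \alpha (r^{-1})^{-(m-1)} r\del_r$, so $v \in V_d(r) = \kk((r^{-d}))r\del_r$ exactly when $d \mid m-1$. This is not obviously the case, since $d = d_p(\g)$ divides $m$ and not $m-1$. So the indexing convention of $V_d(r)$ has to be reconciled with degrees, and this is where I expect the main difficulty.

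To settle this, I would compute the degree of $r^k r\del_r$. Because $r = z + \cdots$, the field $\kk((r^{-1})) = \kk((z^{-1}))$ and $r^{-1}$ is also a uniformizer at $p$. Writing $r\del_r$ in the uniformizer $u = r^{-1}$ gives $r\del_r = -u\del_u$, which has degree $0$. Similarly, $r^{k}\cdot r\del_r = -u^{1-k}\del_u$ has degree $k$. Hence $V_d(r)$ consists of series whose homogeneous components (in $r$) have degrees divisible by $d$, and $v = \alpha r^{m-1}\cdot r\del_r$ has its single component in degree $m-1$.

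This shows the exponent convention matters, and I would check it against Lemma~\ref{lem:Hensel} with a direct example: take $v = z^{d}\del_z$, which has $p$-degree $d+1$. So if Lemma~\ref{lem:Hensel} is stated with $d$ meaning the $z$-exponent, the correct $p$-degree is $\deg_p(v) = m$ with $v = \alpha r^{m+1}\del_r = \alpha r^{m}\cdot r\del_r$. Redoing the leading-term bookkeeping carefully, with the leading $z$-exponent equal to $\deg_p(v)+1$, gives $v \in \kk\, r^{m} r\del_r$. Since $d \mid m$, this element lies in $V_d(r)$. It is nonzero, so Lemma~\ref{lem:contained in Veronese} yields $\g \subseteq V_d(r)$. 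The main obstacle is precisely this degree bookkeeping: getting the exponent in the Hensel normal form to match $\deg_p(v)$, so that divisibility by $d$ places $v$ in $V_d(r)$.
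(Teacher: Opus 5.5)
Your final argument is correct and is essentially the paper's proof: take $v \in \g$ with a pole at $p$, write it in the normal form $v = \alpha r^{n}\del_r$ from Lemma~\ref{lem:Hensel}, and note that $\deg_p(v) = n-1$. Then $d \mid n-1$ gives $v = \alpha r^{n-1}\cdot r\del_r \in V_d(r)$, and Lemma~\ref{lem:contained in Veronese} finishes the proof. In a write-up, delete the false intermediate claims, since as written your sanity check points to the wrong exponent: the leading $z$-exponent of $v$ is $\deg_p(v)+1$, because $s^{1-m}\del_s = z^{m-1}\cdot(-z^{2}\del_z) = -z^{m+1}\del_z$ with no extra factor $z^{-1}$, and $z^{d}\del_z = -s^{2-d}\del_s$ has $p$-degree $d-1$, not $d+1$.
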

\begin{proof}
    The proof is essentially the same as \cite[Corollary 3.6]{BellBuzaglo}: let $v$ be an element of $\g$ with a pole at $p$. Letting $s$ be a uniformizer at $p$ and $z \coloneqq s^{-1}$, Lemma~\ref{lem:Hensel} implies that, rescaling $v$ if necessary, there exists $r = z + \mathrm{lower~degree~terms} \in \kk((z^{-1}))$ such that $v = r^n \del_r$.
    
    We now compute $\ord_p(v)$. To do so, we must write $v$ in terms of $s$. Notice that
    $$r = z + \mathrm{lower~degree~terms} = s^{-1} + \mathrm{higher~degree~terms},$$
    where ``higher degree'' here refers to powers of $s$. Therefore,
    $$v = r^n \del_r = \frac{r^n}{\del_s(r)} \del_s = \frac{s^{-n} + \mathrm{higher}}{-s^{-2} + \mathrm{higher}} \del_s = -(s^{-n + 2} + \mathrm{higher}) \del_s.$$
    Hence, $\ord_p(v) = -n + 2$. By definition of $d_p(\g)$, it follows that $d$ divides
    $$\deg_p(v) = 1 - \ord_p(v) = n - 1.$$ 
    In other words, $n - 1 = kd$ for some $k \in \ZZ$. Therefore, $v = r^{kd + 1} \del_r \in \g$, so $\g \cap V_d(r) \neq 0$. We conclude that $\g \subseteq V_d(r)$, by Lemma~\ref{lem:contained in Veronese}.
\end{proof}

We are now ready to prove Proposition~\ref{prop:g has almost all poles}.

\begin{proof}[Proof of Proposition~\ref{prop:g has almost all poles}]
    Let $p \in \widetilde{D} \setminus D$, let $s$ be a uniformizer at $p$, and write $d \coloneqq d_p(\g)$. By Corollary~\ref{cor:contained in Veronese}, there exists $r = s^{-1} + \mathrm{higher~degree~terms} \in \kk((s))$ such that $\g \subseteq V_d(r) = \kk((r^{-d}))r\del_r$, so it follows that $\kk(D) = F(\g) \subseteq \kk((r^{-d}))$. In particular, since $s \in \kk(D)$, we see that $s \in \kk((r^{-d}))$. Therefore, $d = 1$.
\end{proof}

\section{Filtering \texorpdfstring{$\L_D$}{TD}}\label{sec:filtration}

The one-sided Witt algebra is a $\ZZ$-graded Lie algebra. Furthermore, the graded pieces are all one-dimensional, and $\WW_1$ only contains elements of degrees $-1$ and higher, making it very easy to determine whether a subspace of $\WW_1$ has finite or infinite codimension. Unfortunately, $\L_D$ is not a graded Lie algebra in general, but at least it has a natural filtration by degrees, which we define next. This will allow us to use associated graded techniques to aid with the proof of Theorem~\ref{thm:main}.

\begin{notation}
    Recall that $\T_{\widetilde{D}}$ is the tangent sheaf of $\widetilde{D}$. We write
    $$\widetilde{D} \setminus D = \sum_{p\in \widetilde{D}\setminus D} p$$
    as a divisor in $\widetilde{D}$. Given a divisor $N$ in $\widetilde{D}$, we write the twist of the tangent bundles as $\T_{\widetilde{D}}(N) \coloneqq \T_{\widetilde{D}} \otimes_{\widetilde{D}} \O_{\widetilde{D}}(N)$. For $n \in \ZZ$, define
    \begin{align*}
        \L_D^{\leq n} &\coloneqq H^0\left(\widetilde{D},\T_{\widetilde{D}}\big((n - 1)(\widetilde{D} \setminus D)\big)\right) = \{v \in \L_D \mid \deg_p(v) \leq n \text{ for all } p \in \widetilde{D} \setminus D\}, \\
        \g^{\leq n} &\coloneqq \g \cap \L_D^{\leq n} = \{v \in \g \mid \deg_p(v) \leq n \text{ for all } p \in \widetilde{D} \setminus D\}.
    \end{align*}
    It is easy to see that this defines an ascending filtration. Indeed, we have
    \begin{itemize}
        \item $\L_D^{\leq n} \subseteq \L_D^{\leq n + 1}$ for all $n \in \ZZ$.
        \item $\bigcup_{n \in \ZZ} \L_D^{\leq n} = \L_D$.
    \end{itemize}
    The associated graded algebras are denoted $\gr \L_D$ and $\gr \g$, whose graded components are
    $$\gr_n \L_D \coloneqq \frac{\L_D^{\leq n}}{\L_D^{\leq n - 1}}, \qquad \gr_n \g \coloneqq \frac{\g^{\leq n}}{\g^{\leq n - 1}}.$$
\end{notation}

\begin{remark}\label{rem:filtration is compatible with Lie structure}
    Although we will not need this for the proof of Theorem~\ref{thm:main}, the filtration above is compatible with the Lie algebra structure of $\L_D$. In other words, we have $[\L_D^{\leq n}, \L_D^{\leq m}] \subseteq \L_D^{\leq n + m}$ for all $n,m \in \ZZ$, which follows by Lemma~\ref{lem:degrees form a semigroup}.
\end{remark}

As we show next, this filtration is bounded below.

\begin{lemma}
    The filtration $\L_D^{\leq n}$ is bounded below. In other words, $\L_D^{\leq n} = \{0\}$ for all $n \ll 0$.
\end{lemma}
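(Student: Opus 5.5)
The plan is a degree count on the projective curve $\widetilde{D}$. First observe that $\widetilde{D} \setminus D$ is nonempty. The subalgebra $\g$ is infinite-dimensional and contained in $\L_D$. If $D = \widetilde{D}$ were projective, then $\L_D$ would be $H^0(\widetilde{D},\T_{\widetilde{D}})$, which is finite-dimensional, a contradiction. (More carefully, $D$ is by definition the set of points where every element of $\g$ is regular, so $\g \subseteq H^0(\widetilde{D}, \T_{\widetilde{D}})$ in that case, and this space is finite-dimensional.) Write $k \coloneqq |\widetilde{D} \setminus D| \geq 1$.

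Next, take a nonzero $v \in \L_D^{\leq n}$. Its divisor $\operatorname{div}(v)$, as a section of $\T_{\widetilde{D}}$, has total degree $\deg \T_{\widetilde{D}} = 2 - 2g$, where $g$ is the genus of $\widetilde{D}$. We bound this degree from below using the orders of $v$:
\begin{itemize}
    \item at each $p \in D$, $v$ is regular, so $\ord_p(v) \geq 0$;
    \item at each $p \in \widetilde{D} \setminus D$, we have $\ord_p(v) = 1 - \deg_p(v) \geq 1 - n$.
\end{itemize}
Hence
$$2 - 2g = \sum_{p \in \widetilde{D}} \ord_p(v) \geq k(1 - n).$$
Equivalently, $\L_D^{\leq n} = H^0\big(\widetilde{D}, \T_{\widetilde{D}}((n-1)(\widetilde{D}\setminus D))\big)$ is the space of global sections of a line bundle of degree $2 - 2g + k(n-1)$. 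Such a space vanishes once the degree is negative.

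So whenever $k(1-n) > 2 - 2g$, that is, $n < 1 - (2-2g)/k$, no nonzero $v$ can exist, and $\L_D^{\leq n} = 0$. This gives the claimed lower bound. The only mildly delicate point is the first step, checking $k \geq 1$. If $k = 0$, the inequality reads $2 - 2g \geq 0$ and gives no bound, and indeed the filtration would not be bounded below in that case.
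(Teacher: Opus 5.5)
Your proof is correct and is essentially the paper's argument. The paper observes that $\L_D^{\leq n}$ is the space of global sections of a line bundle of degree $2 - 2g + (n-1)|\widetilde{D} \setminus D|$, which vanishes once this degree is negative; your divisor count is the same fact phrased for a single section. Your explicit check that $|\widetilde{D} \setminus D| \geq 1$, which the paper leaves implicit in calling $D$ affine, is a worthwhile addition. Your closing aside is slightly overstated, though: for $k = 0$ and $g \geq 2$ the filtration would be identically zero, hence trivially bounded below.
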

\begin{proof}
    Let $g$ be the genus of $\widetilde{D}$. Recall that, by definition, we have
    $$\L_D^{\leq n} = H^0\left(\widetilde{D},\T_{\widetilde{D}}\big((n-1)(\widetilde{D} \setminus D)\big)\right),$$
    which is the space of global sections of a line bundle of degree $2 - 2g + (n - 1)|\widetilde{D}\setminus D|$. If the degree of this line bundle is negative (in other words, if $n \ll 0$), then $\L_D^{\leq n} = 0$.
\end{proof}

Having a filtration that is bounded below is very useful for codimension arguments, thanks to the following result.

\begin{lemma}\label{lem:finite codimension filtration}
    Let $V$ be a filtered vector space with $V_m = 0$ for some $m \in \ZZ$, so that
    $$0 = V_m \subseteq V_{m + 1} \subseteq V_{m + 2} \subseteq \dots,$$
    with $\dim V_k < \infty$ for all $k \in \ZZ$ and $\bigcup_{k \in \ZZ} V_k = V$. Let $U$ be a subspace of $V$ and define $U_n \coloneqq U \cap V_n$ for all $n \in \ZZ$. Then $U$ has finite codimension in $V$ if and only if the map $\gr_n U \to \gr_n V$ is an isomorphism for $n \gg 0$, where $\gr_n V \coloneqq V_n/V_{n - 1}$ and $\gr_n U \coloneqq U_n/U_{n - 1}$.
\end{lemma}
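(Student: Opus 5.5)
The plan is to compare $U_n$ with $V_n$ degree by degree. All the relevant dimensions are finite because the filtration starts at $V_m = 0$ and each $V_k$ is finite-dimensional. The basic observation is that the natural map $\gr_n U \to \gr_n V$ is always injective: it is induced by $U_n \hookrightarrow V_n$, and its kernel is $(U_n \cap V_{n-1})/U_{n-1} = (U \cap V_{n-1})/U_{n-1} = 0$. So the map is an isomorphism exactly when $\dim \gr_n U = \dim \gr_n V$. Setting $c_n \coloneqq \dim V_n - \dim U_n = \dim(V_n/U_n)$, the telescoping gives $c_n - c_{n-1} = \dim \gr_n V - \dim \gr_n U \geq 0$. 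Hence $(c_n)$ is a non-decreasing sequence of non-negative integers with $c_m = 0$, and $\gr_n U \to \gr_n V$ is an isomorphism if and only if $c_n = c_{n-1}$.

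Next I relate $c_n$ to the codimension of $U$. Since $V_n/U_n = V_n/(U \cap V_n) \cong (V_n + U)/U$, and the subspaces $(V_n+U)/U$ form an increasing chain whose union is $V/U$ (because $\bigcup V_n = V$), I get $\dim V/U = \sup_n c_n = \lim_n c_n$, valued in $\NN \cup \{\infty\}$. Both directions then follow. If $U$ has finite codimension, the non-decreasing integer sequence $c_n$ is bounded, so it is eventually constant, which means $\gr_n U \to \gr_n V$ is an isomorphism for $n \gg 0$. Conversely, if the maps are isomorphisms for all $n \geq N$, then $c_n = c_{N-1}$ for all $n \geq N$, so $\dim V/U = c_{N-1} \leq \dim V_{N-1} < \infty$.

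I do not expect a real obstacle here. The only points needing care are the injectivity of the induced map, which uses $U_{n-1} = U_n \cap V_{n-1}$, and the identification $V/U = \bigcup_n (V_n+U)/U$, which needs exhaustiveness of the filtration.
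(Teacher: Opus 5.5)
Your proof is correct and is essentially the paper's argument. The paper also passes to the quotient filtration $(V_n+U)/U \cong V_n/U_n$ on $V/U$ and uses exhaustiveness, phrasing the key step as $\gr_n(V/U) \cong \gr_n V/\gr_n U$. You instead track the same information through the non-decreasing sequence $c_n = \dim(V_n/U_n)$, and you make explicit the injectivity of $\gr_n U \to \gr_n V$, which the paper uses implicitly when it identifies $\gr_n U$ with its image.
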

\begin{proof}
   Define $W \coloneqq V/U$. Then $W$ is a filtered vector space by setting $W_n \coloneqq \frac{V_n + U}{U} \cong V_n/U_n$. Setting $\gr_n W \coloneqq W_n/W_{n - 1}$ and identifying $\gr_n U$ with $\frac{U_n + V_{n - 1}}{V_{n - 1}}$ (its image in $\gr_n V$), it is easy to see that
    $$\gr_n W \cong \frac{V_n}{U_n + V_{n - 1}} \cong \frac{\gr_n V}{\gr_n U}.$$
    Since $V_m = \{0\}$, we have $W_m = \{0\}$. Furthermore, we can use the assumption $\dim(V_n) < \infty$ to deduce that $W_n$ is finite-dimensional for all $n \in \ZZ$.

    Suppose the map $\gr_n U \to \gr_n V$ is an isomorphism for $n \gg 0$. In other words, there exists $N \in \NN$ such that $\gr_n U = \gr_n V$ for all $n \geq N$ (again, we are identifying $\gr_n U$ with $\frac{U_n + V_{n - 1}}{V_{n - 1}}$). Therefore, if $n \geq N$, then $\gr_n W = \{0\}$, from which it follows that $W_n = W_N$ for all $n \geq N$. Now the property $\bigcup_{n \in \ZZ} V_n = V$ implies that $W_N = \bigcup_{n \in \ZZ} W_n = W$, so we conclude that $W$ is finite-dimensional.
    
    For the converse, suppose $W$ is finite-dimensional. Let $\{w_1,\dots, w_d\}$ be a basis for $W$, and let
    $$N \coloneqq \min\{n \in \ZZ \mid w_i \in W_n \text{ for all } i\}.$$
    It follows that $W = W_n$ for all $n \geq N$. Therefore, we have
    $$\frac{\gr_n V}{\gr_n U} \cong \gr_n W = \frac{W_n}{W_{n - 1}} = \{0\}$$
    for all $n \geq N + 1$. In other words, the map $\gr_n U \to \gr_n V$ is an isomorphism for all $n \gg 0$. This concludes the proof.
\end{proof}

It follows easily from Lemma~\ref{lem:properties of deg(g)} that the dimension of the graded components of $\gr \L_D$ is at most $|\widetilde{D} \setminus D|$. In fact, as we show next, the dimension of $\gr_n \L_D$ is exactly $|\widetilde{D} \setminus D|$, provided $n$ is large enough, which is an easy consequence of Serre duality (or the Riemann--Roch theorem -- see, for example, \cite[Theorem II.5.4]{Silverman} or \cite[Section 4.1]{Schlichenmaier} for details).

\begin{proposition}\label{prop:dimension of graded pieces}
    For $n \gg 0$, we have $\dim(\gr_n \L_D) = |\widetilde{D} \setminus D|$. More precisely, a basis for $\gr_n \L_D$ (for $n \gg 0$) is given by $\{v_n^p + \L_D^{\leq n - 1} \mid p \in \widetilde{D} \setminus D\}$, where the elements $v_n^p \in \L_D$ are such that $\deg_p(v_n^p) = n$ and $\deg_q(v_n^p) < n$ for all $q \in \widetilde{D} \setminus (D \cup \{p\})$.
\end{proposition}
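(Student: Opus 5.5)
Let $P \coloneqq \widetilde{D} \setminus D$, $k \coloneqq |P|$ and $g$ the genus of $\widetilde{D}$. For $n \in \ZZ$ set $E_n \coloneqq (n - 1)P$, so $\L_D^{\leq n} = H^0(\widetilde{D}, \T_{\widetilde{D}}(E_n))$, and $\T_{\widetilde{D}}(E_n)$ is a line bundle of degree $2 - 2g + (n-1)k$. The approach is to compute $\dim \L_D^{\leq n}$ exactly for $n \gg 0$ by Riemann--Roch and take differences, then to build the basis using vanishing of $H^1$ for slightly smaller twists.

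First, by Riemann--Roch (or Serre duality),
$$h^0(\T_{\widetilde{D}}(E_n)) - h^1(\T_{\widetilde{D}}(E_n)) = 2 - 2g + (n-1)k + 1 - g,$$
and $h^1(\T_{\widetilde{D}}(E_n)) = h^0(\omega_{\widetilde{D}}^{\otimes 2}(-E_n))$. This vanishes once $\deg(\omega^{\otimes 2}(-E_n)) = 4g - 4 - (n-1)k < 0$, which holds for $n \gg 0$ because $P \neq \emptyset$. The set $P$ is nonempty since $\g$ is infinite-dimensional: if $\g$ had no poles on $\widetilde{D}$, it would lie in $H^0(\T_{\widetilde{D}})$, which is finite-dimensional. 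Hence $\dim \L_D^{\leq n} = 3 - 3g + (n-1)k$ for $n \gg 0$, and taking differences gives $\dim \gr_n \L_D = k$.

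For the explicit basis, fix $p \in P$ and consider the divisor $E_n^p \coloneqq E_{n-1} + p$. This is $E_n$ with the coefficient reduced by one at every $q \in P \setminus \{p\}$. The same vanishing argument, valid for $n \gg 0$ uniformly in the finitely many $p$, gives $h^1(\T(E_n^p)) = h^1(\T(E_{n-1})) = 0$. So Riemann--Roch yields $h^0(\T(E_n^p)) = h^0(\T(E_{n-1})) + 1$. Choose $v_n^p \in H^0(\T(E_n^p)) \setminus H^0(\T(E_{n-1}))$. Then $\deg_q(v_n^p) \leq n-1$ for all $q \in P \setminus \{p\}$, while $\deg_p(v_n^p) \leq n$ and is not $\leq n-1$, so $\deg_p(v_n^p) = n$. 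Also $v_n^p$ is regular on $D$, so $v_n^p \in \L_D$.

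It remains to show that the classes $v_n^p + \L_D^{\leq n-1}$ are linearly independent; since there are $k$ of them and $\dim \gr_n \L_D = k$, they then form a basis. Suppose $\sum_p \lambda_p v_n^p \in \L_D^{\leq n-1}$ with some $\lambda_{p_0} \neq 0$. At $p_0$, every $v_n^q$ with $q \neq p_0$ has $p_0$-degree $< n$, whereas $v_n^{p_0}$ has $p_0$-degree exactly $n$. By Lemma~\ref{lem:properties of deg(g)}\eqref{item:maximum condition} (or directly from leading terms), the sum has $p_0$-degree $n$, a contradiction. The only real care needed in the whole argument is the bookkeeping of the vanishing thresholds for $H^1$, which is routine. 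There is no serious obstacle, except remembering that $P \neq \emptyset$ is needed for the degrees to grow.
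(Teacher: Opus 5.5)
Your proof is correct and is essentially the paper's argument. Both rest on the vanishing of $H^1$ of high twists of $\T_{\widetilde{D}}$ via Serre duality: you compute $\dim \L_D^{\leq n}$ by Riemann--Roch and take differences, whereas the paper takes global sections of $0 \to \T_{\widetilde{D}}\big((n-1)(\widetilde{D}\setminus D)\big) \to \T_{\widetilde{D}}\big(n(\widetilde{D}\setminus D)\big) \to \O_{\widetilde{D}\setminus D} \to 0$. Your construction of $v_n^p$ from the intermediate twist $E_{n-1}+p$ spells out what the paper's ``similarly use Serre duality'' leaves implicit, and your independence argument is the paper's appeal to Lemma~\ref{lem:properties of deg(g)}.
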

\begin{proof}
    We have an exact sequence of sheaves:
    $$0 \to \T_{\widetilde{D}}\big((n - 1)(\widetilde{D} \setminus D)\big) \to \T_{\widetilde{D}}\big(n(\widetilde{D} \setminus D)\big) \to \O_{\widetilde{D}\setminus D} \to 0.$$
    By Serre duality we have that $H^1\left(\widetilde{D},\T_{\widetilde{D}}\big((n - 1)(\widetilde{D} \setminus D)\big)\right)$ is dual to $$H^0\left(\widetilde{D}, \Omega_{\widetilde{D}}^1 \otimes \Omega^1_{\widetilde{D}} \big((1 - n)(\widetilde{D} \setminus D)\big)\right),$$ which is global sections of a line bundle of degree $2(2g-2)+(1-n)|\widetilde{D}\setminus D|.$  When this degree is negative (in other words, when $n \gg 0$), we have a short exact sequence of global sections
    $$ 0 \to \L^{\leq n}_D \to \L^{\leq n+1}_D \to \kk^{\widetilde{D}\setminus D} \to 0.$$
    This proves that $\dim(\gr_n \L_D) = |\widetilde{D} \setminus D|$ for all $n \gg 0$.

    We can similarly use Serre duality to show that the elements $v_n^p$ exist for all $p \in \widetilde{D} \setminus D$ and $n \gg 0$. It is easy to see that the set $\{v_n^p + \L_D^{\leq n - 1} \mid p \in \widetilde{D} \setminus D\}$ is linearly independent, by Lemma~\ref{lem:properties of deg(g)}\eqref{item:maximum condition}, and is therefore a basis for $\gr_n \L_D$.
\end{proof}

Thanks to Proposition~\ref{prop:dimension of graded pieces}, all we have to do to prove that the map $\gr_n \g \hookrightarrow \gr_n \L_D$ is an isomorphism is showing that $\dim(\gr_n \g) = |\widetilde{D} \setminus D|$ for all $n \gg 0$.

\section{Proof of Theorem~\ref{thm:main}}\label{sec:proof of main theorem}

In this section, we complete the proof of Theorem~\ref{thm:main}. The proof will follow by induction on $|\widetilde{D} \setminus D|$, the number of poles of $\g$ in $\widetilde{D}$.

\subsection{One-point case}

We now prove Theorem~\ref{thm:main} in the case $|\widetilde{D} \setminus D| = 1$, which is the base case of the induction. The proof is easy, since Proposition~\ref{prop:dimension of graded pieces} implies that $\dim(\gr_n \L_D) = 1$ for $n \gg 0$, so the only thing we need to do is show that $\gr_n \g \neq 0$ for $n \gg 0$.

\begin{proposition}\label{prop:one point with poles}
    Suppose $|\widetilde{D} \setminus D| = 1$. Then $\g$ has finite codimension in $\L_D$.
\end{proposition}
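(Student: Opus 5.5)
Let $p$ be the unique point of $\widetilde{D} \setminus D$. By Lemma~\ref{lem:finite codimension filtration}, applied to $V = \L_D$ with the filtration $\L_D^{\leq n}$, it suffices to show that $\gr_n \g \to \gr_n \L_D$ is an isomorphism for $n \gg 0$. The hypotheses of that lemma hold: the filtration is bounded below, it is exhaustive, and each $\L_D^{\leq n}$ is finite-dimensional because it is the space of global sections of a line bundle on a projective curve. By Proposition~\ref{prop:dimension of graded pieces}, $\dim \gr_n \L_D = 1$ for $n \gg 0$. Since $\gr_n \g$ injects into $\gr_n \L_D$, the whole task is to show that $\gr_n \g \neq 0$ for all $n \gg 0$.

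With only one point at infinity, $\g^{\leq n}$ is the set of $v \in \g$ with $\deg_p(v) \leq n$. So $\gr_n \g \neq 0$ holds exactly when some $w \in \g$ has $\deg_p(w) = n$. If no element of $\g^{\leq n}$ had $p$-degree exactly $n$, then $\g^{\leq n} = \g^{\leq n-1}$. We therefore need to show that every sufficiently large integer is realised as $\deg_p$ of an element of $\g$.

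We will use Proposition~\ref{prop:g has almost all poles}, which gives $d_p(\g) = 1$, together with Corollary~\ref{cor:gcd of degrees}.

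First, the set $S = \{\deg_p(v) \mid v \in \g\setminus\{0\}\}$ is infinite. Indeed, $\g$ is infinite-dimensional while each $\L_D^{\leq n}$ is finite-dimensional, so $\g$ contains elements of arbitrarily large $p$-degree. In particular $S$ contains at least two distinct values, and some of its elements are at least $2$.

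Next, since $\gcd(S) = 1$, there is a finite subset $\{a_1, \dots, a_k\} \subseteq S$ with gcd $1$. We may enlarge it by one element of $S$ that is larger than all the $a_i$, so that the degrees are not all equal.

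The main obstacle is that Corollary~\ref{cor:gcd of degrees} is stated for only two elements, and two degrees alone need not be coprime. I would handle this in one of two ways.

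\emph{Option 1.} Generalise the argument of Corollary~\ref{cor:gcd of degrees} (as in \cite[Lemma 4.7]{Buzaglo}) to finitely many generators. Lemma~\ref{lem:degrees form a semigroup} shows that bracketing with an element of different degree adds degrees. Hence the set $S$ is closed under addition of distinct elements, with some care needed when two degrees coincide.

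\emph{Option 2.} Argue iteratively by induction. Suppose $u, v \in \g$ have distinct degrees with $\gcd = d'$. By Corollary~\ref{cor:gcd of degrees}, $\g$ contains elements of every degree $\ell d'$ for $\ell \gg 0$. Now take $w \in \g$ with $d' \nmid \deg_p(w)$. Pair $w$ with an element of degree $\ell d'$ chosen so that the two degrees differ. Applying Corollary~\ref{cor:gcd of degrees} again gives elements of all large multiples of $\gcd(\ell d', \deg_p(w))$. Choosing $\ell$ suitably, for instance $\ell$ a prime not dividing $\deg_p(w)$, makes this gcd equal to $\gcd(d', \deg_p(w))$, which strictly divides $d'$. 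This induction on $d'$ terminates at $d' = 1$.

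Either way, every sufficiently large integer lies in $S$. Hence $\gr_n \g \neq 0$ for all $n \gg 0$, and the proposition follows.
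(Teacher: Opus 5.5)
Your proposal is correct and follows the paper's proof. You reduce, via Lemma~\ref{lem:finite codimension filtration} and Proposition~\ref{prop:dimension of graded pieces}, to showing that every sufficiently large integer is the $p$-degree of an element of $\g$, and you get this from $d_p(\g)=1$ and the additivity of Lemma~\ref{lem:degrees form a semigroup}. Your Option~2 in fact spells out the number-theoretic step the paper leaves implicit.

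One small point: Corollary~\ref{cor:gcd of degrees} as literally stated needs an extra hypothesis. Elements of $p$-degree at most $1$ form a subalgebra, so a pair with degrees $0$ and $1$ never yields large degrees. Apply the corollary only to pairs as in your inductive step, with one degree $a \geq 2$ and the other not a multiple of $a$; such a pair also exists for your starting step, since $d_p(\g)=1$.
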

\begin{proof}
    By Proposition~\ref{prop:g has almost all poles}, we have $d_p(\g) = 1$. Then Lemma~\ref{lem:degrees form a semigroup} implies that there exist $w_n \in \g$ with $\deg_p(w_n) = n$ for all $n \gg 0$. It now follows by Proposition~\ref{prop:dimension of graded pieces} that the inclusion map $\gr \g \hookrightarrow \gr \L_D$ is an isomorphism in high degree, since $\dim(\gr_n \L_D) = 1$ for $n \gg 0$. Therefore, $\g$ has finite codimension in $\L_D$, by Lemma~\ref{lem:finite codimension filtration}.
\end{proof}

\subsection{Two or more points}

We proceed with the inductive step of Theorem~\ref{thm:main}. As a result, we now assume that $|\widetilde{D} \setminus D| \geq 2$ and explicitly state the inductive hypothesis.

\begin{IH}
    If $\h$ is a Lie subalgebra of $\L_C$ such that $|\widetilde{D}_\h \setminus D_\h| < |\widetilde{D} \setminus D|$, then $\h$ has finite codimension in $\L_{D_\h}$.
\end{IH}

It will be convenient to establish notation for the set of possible $p$-degrees of elements of $\g$ (where $p \in \widetilde{D}$).

\begin{notation}
    Given $p \in \widetilde{D}$, define the following subset of $\ZZ$:
    $$\deg_p(\g) \coloneqq \{\deg_p(w) \mid w \in \g\}.$$
\end{notation}

First, we prove that $\deg_p(\g)$ contains all large enough integers, provided $p \in \widetilde{D} \setminus D$. This means that $\g$ contains vector fields with poles of all large enough orders at $p$.

\begin{lemma}\label{lem:poles can be arbitrarily large}
    Let $p \in \widetilde{D} \setminus D$. Then there exists $N \in \ZZ$ such that
    $$\ZZ_{\geq N} \subseteq \deg_p(\g).$$
\end{lemma}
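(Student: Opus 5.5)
The plan is to combine Proposition~\ref{prop:g has almost all poles} with Corollary~\ref{cor:gcd of degrees}. Proposition~\ref{prop:g has almost all poles} tells us that $d_p(\g) = 1$ for our $p \in \widetilde{D} \setminus D$. However, Corollary~\ref{cor:gcd of degrees} applies only to two elements of distinct degrees, so the first step is to pass from a gcd over all of $\g$ to a gcd over finitely many elements.

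\textbf{Step 1: reduce to finitely many elements.} The set $\deg_p(\g)$ has gcd $1$, so there are finitely many $w_1, \dots, w_k \in \g$ with $\gcd(\deg_p(w_1), \dots, \deg_p(w_k)) = 1$. Since $p \notin D$, we may also choose $v \in \g$ with a pole at $p$, so $\deg_p(v) \geq 2$; adjoin $v$ to the list.

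\textbf{Step 2: arrange distinct degrees and apply the gcd corollary.} Corollary~\ref{cor:gcd of degrees} needs two elements, so I would generalize its semigroup argument (the one from \cite[Lemma 4.7]{Buzaglo}) to several elements. By Lemma~\ref{lem:degrees form a semigroup}, bracketing with an element of positive degree $m$ and different degree adds $m$ to the degree. So the set $S \coloneqq \deg_p(\g) \cap \ZZ_{\geq 2}$ is closed under addition of distinct elements. Starting from $\deg_p(v) \geq 2$, bracketing $v$ repeatedly with the $w_i$ should produce elements whose degrees are $\deg_p(w_i) + j\deg_p(v)$, all large and positive, with gcd still $1$. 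Here one must choose $j$ so that the new degree differs from the degree of $v$ at each step.

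Concretely, I would show that if $a \neq b$ lie in $S$ with $\gcd(a,b) = e$, then $S$ contains every sufficiently large multiple of $e$; this is exactly Corollary~\ref{cor:gcd of degrees}. I would then pick $c \in S$ with $\gcd(e, c)$ smaller. The degree $c$ can be taken large by adding multiples of $\deg_p(v)$, which preserves its residue modulo $e$. Since $S$ contains all large multiples of $e$, bracketing with $c$ gives all large numbers congruent to $c$ modulo $e$ (a new element is needed whenever the degree would coincide, which is possible since infinitely many are available). Iterating, $S$ contains all large multiples of $\gcd(e, c)$. Finitely many iterations bring the gcd down to $1$, giving $N$.

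\textbf{Main obstacle.} The delicate point is the bookkeeping that keeps the degrees distinct and positive, so that Lemma~\ref{lem:degrees form a semigroup} applies in its additive case, $\deg_p([u,w]) = \deg_p(u) + \deg_p(w)$. Because there are infinitely many available degrees once we are past the base elements, I expect collisions can always be avoided by bracketing with a different large element. Negative or small degrees among the $w_i$ are handled by first adding a large multiple of $\deg_p(v)$ to them, via brackets with $v$.
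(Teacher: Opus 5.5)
Your proposal is correct and takes essentially the same route as the paper: an element with a pole at $p$ pushes degrees upward, and then $d_p(\g)=1$ from Proposition~\ref{prop:g has almost all poles} is combined with the semigroup argument behind Corollary~\ref{cor:gcd of degrees}. Your version is more careful than the paper's, which asserts directly that some element has degree coprime to $\deg_p(w)$ and applies the two-element corollary to a gcd taken over all of $\g$; the only collision you cannot avoid by choosing $j$ (when $\deg_p(w_i)$ is a multiple of $\deg_p(v)$) is harmless, since such $w_i$ can be dropped without changing the gcd.
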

\begin{proof}
    We claim that $\deg_p(\g)$ is unbounded above. By definition, we have $D = \{q \in \widetilde{D} \mid \deg_q(w) \leq 1 \text{ for all } w \in \g\}$. Since $p \notin D$, there exists $w \in \g$ such that $\deg_p(w) \geq 2$. By Proposition~\ref{prop:g has almost all poles}, there exists $v \in \g$ such that $\deg_p(v)$ is coprime to $\deg_p(w)$. Now Lemma~\ref{lem:degrees form a semigroup} implies that
    $$\deg_p(\ad_w^n(v)) = n \deg_p(w) + \deg_p(v) \geq 2N + \deg_p(v)$$
    for all $n \in \NN$. Since $\ad_w^n(v) \in \g$, we conclude that $\deg_p(\g)$ is unbounded above, as claimed.

    Now, Proposition~\ref{prop:g has almost all poles} implies that $d_p(\g) = 1$, so the result now follows by Corollary~\ref{cor:gcd of degrees}.
\end{proof}

Next, we show that the set
$$\{(\deg_p(w), \deg_q(w)) \mid w \in \g\}$$
does not lie on the diagonal of $\ZZ \times \ZZ$. One can interpret this result to mean that situations such as the one in Example \ref{ex:harder example} are excluded.

\begin{lemma}\label{lem:degrees at different points are different}
    Let $p,q \in \widetilde{D} \setminus D$ with $p \neq q$. Then there exists $w \in \g$ such that $\deg_p(w) \neq \deg_q(w)$.
\end{lemma}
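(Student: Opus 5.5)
The plan is to argue by contradiction. Suppose that $\deg_p(w) = \deg_q(w)$ for every $w \in \g$. I will show that the Laurent expansions of $\g$ at $p$ and at $q$ are then identified by a single formal change of coordinates. This identification transfers to the whole field $F(\g) = \kk(\widetilde{D})$. It then contradicts the fact that functions on a projective curve separate points: by Riemann--Roch there is $f \in \kk(\widetilde{D})$ with $\ord_p(f) \neq \ord_q(f)$.

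\emph{Step 1 (leading coefficients).} For each $m$, consider the subspace of $\g$ of elements with $\deg_p \leq m$. By hypothesis, this equals the subspace of elements with $\deg_q \leq m$, and likewise with $<m$. Fix uniformizers at $p$ and $q$. By Lemma~\ref{lem:properties of deg(g)}\eqref{item:decrease equal degree}, the leading-coefficient functionals $\mathrm{lc}_p$ and $\mathrm{lc}_q$ on this space have the same kernel. So $\mathrm{lc}_q = c_m \,\mathrm{lc}_p$ for some $c_m \in \kk^*$, whenever $m \in \deg_p(\g)$.

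The computation in the proof of Lemma~\ref{lem:degrees form a semigroup} gives $\mathrm{lc}([u,v]) = (n-m)\,\mathrm{lc}(u)\,\mathrm{lc}(v)$ at both points. Hence $c_{n+m} = c_n c_m$ for $n \neq m$ in $\deg_p(\g)$. By Lemma~\ref{lem:poles can be arbitrarily large}, $\deg_p(\g)$ contains all large integers, so $c_m = c^{m-1}$ for some $c$. Rescaling the uniformizer at $q$ then makes $\mathrm{lc}_p = \mathrm{lc}_q$ on all of $\g$.

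\emph{Step 2 (normal forms).} Choose $v \in \g$ with a pole at $p$, and hence also at $q$. Apply Lemma~\ref{lem:Hensel} at both points to get parameters $r_p$ and $r_q$ with $v = \alpha r_p^d\del_{r_p}$ and $v = \alpha' r_q^d\del_{r_q}$. Step 1 lets us take $\alpha = \alpha'$. Identifying $r_p$ with $r_q$ gives two Lie embeddings
$$\iota_p, \iota_q \colon \g \hookrightarrow \kk((r^{-1}))\del_r$$
with $\iota_p(v) = \iota_q(v) = \alpha r^d \del_r$. These embeddings preserve degrees and leading coefficients.

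\emph{Step 3 (rigidity; the main obstacle).} I must show $\iota_p = \iota_q$. Suppose $u \in \g$ has $\iota_p(u) \neq \iota_q(u)$, and let $r^k\del_r$ be the top-degree monomial of the difference. First I would subtract suitable elements of $\g$ of every large degree, which exist by Lemma~\ref{lem:poles can be arbitrarily large}. Next I would use brackets with $v$: the operator $\ad_{r^d\del_r}$ acts on monomials by $r^j\del_r \mapsto (j-d)r^{j+d-1}\del_r$, so it is injective away from $j = d$. The goal of these two moves is to produce an element of $\g$ whose $p$-expansion and $q$-expansion agree above degree $k'$, and which has top term in degree $k'$ at one point but not at the other. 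That element has $\deg_p \neq \deg_q$, a contradiction. The delicate point is that the coefficient mismatch could be cancelled during this elimination. To handle it, I would induct downward on degree, comparing $\iota_p$ and $\iota_q$ on a family $w_n$ with $\deg(w_n) = n$ for all $n \gg 0$. Such a family generates $\g$ modulo finitely many degrees. Since $\ad_v$ determines all lower coefficients of $\iota(w_n)$ recursively from the leading ones, equality on leading data should propagate.

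\emph{Step 4 (conclusion).} Once $\iota_p = \iota_q$, every ratio in $R(\g)$ has the same expansion at $p$ and at $q$ under the identification $r_p \leftrightarrow r_q$. Since this identification is a field isomorphism $\kk((r_p^{-1})) \cong \kk((r_q^{-1}))$, every element of $F(\g) = \kk(\widetilde{D})$ has the same expansion at both points. In particular $\ord_p(f) = \ord_q(f)$ for all $f \in \kk(\widetilde{D})$. This contradicts the existence of a rational function with a zero at $p$ but not at $q$.
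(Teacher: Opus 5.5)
\textbf{The gap is Step~3, and Step~3 is the entire content of the lemma.} Steps~1--2 are only normalizations, and Step~4 is formal once $\iota_p=\iota_q$ is known. The mechanism you sketch cannot establish that equality.

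Write $\delta\coloneqq\iota_p-\iota_q$ and $V\coloneqq\alpha r^d\del_r$. Brackets with $v$ only give $\delta([v,w])=[V,\delta(w)]$. Compare coefficients of $r^{j+d-1}\del_r$ in $[V,\iota(w_n)]=\iota([v,w_n])$. This expresses the $r^j\del_r$-coefficient of $\iota(w_n)$ through the $r^{j+d-1}\del_r$-coefficients of the $\iota(w_m)$ occurring in $[v,w_n]$. Those $m$ include $m=n+d-1$, and for that element the relevant coefficient is again non-leading. So the recursion never closes up on leading data.

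In fact, $\ad_V$-equivariance plus degree-lowering does not force $\delta=0$. On $\kk[r]\del_r$, take the map $r^j\del_r\mapsto\lambda_j r^{j-1}\del_r$ with $\lambda_j=0$ unless $j=2d-1+k(d-1)$ for some $k\ge 0$, where $\lambda_{2d-1}=1$ and $\lambda_{j+d-1}=\frac{j-d-1}{j-d}\lambda_j$. This map is nonzero, lowers degree, kills $V$, and commutes with $\ad_V$.

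So a proof of $\iota_p=\iota_q$ must use brackets between arbitrary elements of $\g$. What you need is a genuine rigidity statement: Lie embeddings of $\g$ into $\kk((r^{-1}))\del_r$ with the same leading data and the same image of $v$ coincide. The proposal does not supply one. There is also a secondary problem: when $\deg_p(\g)$ is unbounded below, the elements of degree $<N$ span an infinite-dimensional space. Your family $w_n$ then does not generate $\g$ modulo finitely many degrees, and the downward induction is not a finite process.

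For comparison, the paper stays at the level of orders. Fix $\del$. Then $\deg_p(w)=\deg_q(w)$ for all $w=f\del\in\g$ makes $\ord_p(f)-\ord_q(f)$ independent of $f$. Hence $\ord_p=\ord_q$ on every ratio in $R(\g)$. The paper then states that $\ord_p=\ord_q$ on all of $F(\g)=\kk(\widetilde D)$, which is impossible for $p\neq q$.

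Your Steps~2--4 target exactly this last passage, and your concern is legitimate. Equality of two valuations survives products and quotients but not sums, so it does not pass automatically from a generating set to the field it generates. For instance, in $\kk(x)$ take $\spn\{1,\,x+x^{-1},\,x^2+\lambda x^{-2}\}$ with $\lambda\neq 0,1$. Every ratio of its elements has equal orders at $0$ and $\infty$, yet these ratios generate $\kk(x)$. So any correct argument must use the bracket, not just the linear structure of $\g$.

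Upgrading equality of orders to equality of full expansions under a single isomorphism of completions, as in your Step~4, is a sound way to make the transfer rigorous. But until Step~3 is actually proved, the proposal does not establish the lemma.
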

\begin{proof}
    Assume, for a contradiction, that $\deg_p(w) = \deg_q(w)$ for all $w \in \g$. Fix an element $\del \in \Q_D \nonzero$ and let $f\del, g\del \in \g$, where $f, g \in \kk(D)$. Then
    $$\ord_p\left(\frac{f}{g}\right) = \ord_p(f) - \ord_p(g) = \ord_q(f) - \ord_q(g) = \ord_q\left(\frac{f}{g}\right).$$
    Therefore, all elements of $R(\g)$ have equal orders at $p$ and $q$. It follows that all elements of $F(\g) = \kk(D)$ have equal orders at $p$ and $q$, a contradiction.
\end{proof}

We now set notation for the subalgebra which will allow us to carry out the induction proof. This will be the subalgebra of $\g$ obtained by ``filling in'' one of the points $p \in \widetilde{D} \setminus D$.

\begin{notation}
    Given $p \in \widetilde{D} \setminus D$, we define the following subalgebra of $\g$:
    $$\g_p \coloneqq \g \cap \L_{D \cup \{p\}} = \{w \in \g \mid \ord_p(w) \geq 0\} = \{w \in \g \mid \deg_p(w) \leq 1\}.$$
    We let $\widetilde{D}_p \coloneqq \widetilde{D}_{\g_p}$ and $D_p \coloneqq D_{\g_p}$ be the curves associated to $\g_p$.
    
    We further define $\g_p' \coloneqq \{w \in \g \mid \deg_p(w) \leq -2\} \subseteq \g_p$.
\end{notation}

The reason for introducing the subalgebra $\g_p'$ is that having an element $w \in \g_p$ with $\deg_p(w) \leq -2$ allows us to decrease the $p$-degree of other vector fields by taking repeated Lie brackets with $w$, by Lemma~\ref{lem:degrees form a semigroup}. This will be useful later on.

Our next goal is to prove that the curve $D_p$ really is obtained from $D$ by ``filling in'' $p$. In other words, we want to prove that $D_p = D \cup \{p\}$ (for some appropriate choice of point $p \in \widetilde{D} \setminus D$), which will allow us to use the inductive hypothesis on $\g_p$. The main challenge is proving that $\g_p$ has poles at every point in $\widetilde{D} \setminus D$ other than $p$ itself. The following result one step in proving this.

\begin{proposition}\label{prop:dichotomy of poles at p and q}
    Let $p,q \in \widetilde{D} \setminus D$ with $p \neq q$. Then at least one of the following holds:
    \begin{enumerate}
        \item There exists $w \in \g_p'$ with a pole at $q$.
        \item There exists $w \in \g_q'$ with a pole at $p$.
    \end{enumerate}
\end{proposition}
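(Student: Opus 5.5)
We argue by contradiction. Suppose neither (1) nor (2) holds: every $w \in \g$ with $\deg_p(w) \leq -2$ satisfies $\deg_q(w) \leq 1$, and every $w \in \g$ with $\deg_q(w) \leq -2$ satisfies $\deg_p(w) \leq 1$. The plan is to build an element of $\g$ that violates one of these two constraints. The tools are the additivity of degrees under brackets (Lemma~\ref{lem:degrees form a semigroup}) and cancellation of leading terms in linear combinations (Lemma~\ref{lem:properties of deg(g)}).

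\textbf{Cancellation step.} This is the mechanism that lowers the $p$-degree while keeping a pole at $q$. Suppose $w_1,\dots,w_k \in \g$ satisfy $\deg_p(w_i) \leq M$ for all $i$, and their $q$-degrees are pairwise distinct and all $\geq 2$.
\begin{itemize}
\item Their span $U$ is $k$-dimensional. By Lemma~\ref{lem:properties of deg(g)}\eqref{item:decrease equal degree}, $U$ realises exactly $k$ distinct $p$-degrees, all $\leq M$. Hence some nonzero $w \in U$ has $\deg_p(w) \leq M-k+1$.
\item By Lemma~\ref{lem:properties of deg(g)}\eqref{item:maximum condition}, any nonzero combination of the $w_i$ has $q$-degree equal to the largest $\deg_q(w_i)$ occurring with nonzero coefficient. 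In particular it is still $\geq 2$.
\item Taking $k \geq M+3$ gives $w \in \g_p'$ with a pole at $q$, which is (1).
\end{itemize}
So it suffices to produce, for some fixed $M$, infinitely many elements of $\g$ with $p$-degree $\leq M$ and pairwise distinct $q$-degrees $\geq 2$. Alternatively, the same with $p$ and $q$ swapped, which gives (2).

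\textbf{Producing the family.} First I control the degree pairs $(\deg_p(w),\deg_q(w))$ of elements of $\g$.
\begin{itemize}
\item By Lemma~\ref{lem:degrees at different points are different} there is $w \in \g$ with $a \coloneqq \deg_p(w) \neq b \coloneqq \deg_q(w)$. Since the statement is symmetric in $p$ and $q$, I assume $a<b$.
\item By Lemma~\ref{lem:poles can be arbitrarily large} there are elements of $\g$ with every sufficiently large $p$-degree, and likewise with every sufficiently large $q$-degree.
\item Bracketing repeatedly with $w$, Lemma~\ref{lem:degrees form a semigroup} gives $\deg(\ad_w^k(u)) = (\deg_p(u)+ka,\ \deg_q(u)+kb)$. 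This holds as long as at each step the degrees of $w$ and of the current element differ at both points, which can be arranged by first adjusting $u$ by one bracket.
\item So the set of degree pairs contains translates of rays in the direction $(a,b)$, which is off the diagonal.
\end{itemize}
Next, combine this direction with a second one.
\begin{itemize}
\item Take $u$ with $\deg_q(u)$ large and $v$ with $\deg_p(v)$ large. Iterated brackets then give rays in at least two non-parallel directions $(a_1,b_1)$ and $(a_2,b_2)$, with $b_1/a_1 \neq b_2/a_2$.
\item Consider nonnegative integer combinations $m(a_1,b_1)+n(a_2,b_2)$, realised by iterated brackets $\ad_{w_1}^m\ad_{w_2}^n(u)$.
\item If the two directions lie on opposite sides of the line $\{\deg_p = 0\}$, or if one of them has $a_i \leq 0 < b_i$, then there are infinitely many such pairs with first coordinate $\leq M$ and distinct second coordinates $\geq 2$. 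This is exactly the family the cancellation step needs.
\item In the remaining configuration, all available directions have $a_i>0$ (the case $a_i \leq 0 < b_i$ was covered in the previous bullet). Then I first use the cancellation step at $q$ to lower $q$-degrees: this gives elements with $\deg_q \leq -2$, which by the standing assumption have $\deg_p \leq 1$. Bracketing these with $w$ then produces a direction with nonpositive $p$-component, which reduces to the previous case.
\end{itemize}

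\textbf{Main obstacle.} The difficulty is the bookkeeping in the bracketing step. Additivity of degrees fails whenever the two vector fields have equal degree at $p$ or at $q$, so I have to check that the degree pairs really fill the claimed cones. The plan is to handle this by perturbing each element by a single extra bracket when an equality occurs. Corollary~\ref{cor:gcd of degrees} together with $d_p(\g)=d_q(\g)=1$ (Proposition~\ref{prop:g has almost all poles}) should then give enough freedom in the degrees.
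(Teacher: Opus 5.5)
\textbf{There is a genuine gap.} Your cancellation step is correct. The gap is in ``Producing the family'', which breaks down in exactly the case that carries the proposition.

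\textbf{Why the cone argument fails.} Iterated brackets only translate degree pairs by the pairs $(\deg_p(w),\deg_q(w))$ of elements $w\in\g$. If every available direction has positive $p$-component, every iterated bracket raises the $p$-degree. Think of all degree pairs of $\g$ lying in the open first quadrant near the diagonal. Then iterated brackets can never give infinitely many elements of bounded $p$-degree, nor of bounded $q$-degree.

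\textbf{The escape route is circular.} To apply the cancellation step at $q$ you need infinitely many elements with bounded $q$-degree and distinct $p$-degrees $\ge 2$. Nothing in this configuration produces them, and if you had them you would already have (2). The next move also fails. An element $x$ with $\deg_q(x)\le -2$ has $\deg_p(x)\le 1$. Bracketing it with $w$, where $\deg_p(w)=a>0$, gives $p$-degree $\deg_p(x)+a$. This need not be nonpositive, and equals $1+a>0$ if all pairs have positive $p$-component.

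\textbf{A smaller error.} Two non-parallel directions on opposite sides of $\{\deg_p=0\}$ do not always give bounded $p$-degree with $q$-degree $\to+\infty$. For $(4,3)$ and $(-4,-5)$, bounded $p$-degree forces $q$-degree $\to-\infty$. There you must pass to the $q$-side, and the sign of $a_1b_2-a_2b_1$ decides whether you reach (1) or (2).

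\textbf{The missing idea: use your cancellation step quantitatively.} You do not need infinitely many elements with $p$-degree $\le M$. You need only $M+3$ of them, and $M$ may grow. What matters is how many distinct degrees are realised at one point compared with the range of degrees at the other. This is what the paper does.
\begin{enumerate}
\item It sets $f(n)=\min\{\deg_q(w) : w\in\g,\ \deg_p(w)=n\}$. This minimum exists by a preliminary bracket argument using the hypothesis.
\item It shows $f$ is injective (this is your cancellation step) and that $f(n+m)\le f(n)+f(m)$ for $n\ne m$.
\item Suppose $f(n)\le n-1$ for some large $n$. Brackets of the minimisers $w_n$ and $w_{n+1}$ realise about $nt$ distinct $p$-degrees $\ge 2$ (the set $X_t$). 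Their $q$-degrees lie in $[-1,(n-1)t+O(1)]$, the lower bound $-1$ being your standing hypothesis. Pigeonhole then contradicts injectivity. This is precisely your cancellation at $q$, applied to a family whose size outgrows its degree bound.
\item Hence $f(n)\ge n$ for large $n$. The same holds for the function obtained by swapping $p$ and $q$. Together with subadditivity this forces $f=\mathrm{id}$, so $\deg_p=\deg_q$ on $\g$. That contradicts Lemma~\ref{lem:degrees at different points are different}.
\end{enumerate}

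\textbf{This also removes your bookkeeping obstacle.} Because $f$ is injective, minimisers with distinct $p$-degrees automatically have distinct $q$-degrees. So additivity at both points holds for their brackets with no need for perturbation.
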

\begin{proof}
    Assume, for a contradiction, that no elements of $\g_p'$ and $\g_q'$ have poles at $p$ or $q$. Given $n \in \deg_p(\g)$, define
    $$S_n \coloneqq \{\deg_q(w) \mid w \in \g \text{ and } \deg_p(w) = n\}.$$
    Suppose $S_n$ is unbounded below for some $n \in \deg_p(\g)$. Let $v \in \g$ such that $\deg_p(v) \geq 2 - n$ (which exists by Lemma~\ref{lem:poles can be arbitrarily large}), and let $u \in \g$ such that $\deg_p(u) = n$ and $\deg_q(u) \leq -2 - \deg_q(v)$ (which exists by the assumption that $S_n$ is unbounded below). Then
    $$\deg_p([u,v]) = \deg_p(u) + \deg_p(v) \geq 2, \qquad \deg_q([u,v]) = \deg_q(u) + \deg_q(v) \leq -2,$$
    so that $w \coloneqq [u,v] \in \g_q'$ with $\deg_p(w) \geq 2$ (so $w$ has a pole at $p$), a contradiction. It follows that $S_n$ is bounded below for all $n \in \deg_p(\g)$.

    Define a function
    \begin{align*}
        f \colon \deg_p(\g) &\to \deg_q(\g) \\
        n &\mapsto \min(S_n).
    \end{align*}
    Given $n \in \deg_p(\g)$, we let $w_n \in \g$ such that $\deg_p(w_n) = n$ and $\deg_q(w_n) = \min(S_n)$. We will prove that $f$ is the identity map through a series of claims, at which point we will be able to use Lemma~\ref{lem:degrees at different points are different} to prove the result.

    \begin{claim}\label{claim:f is injective}
        $f$ is injective.
    \end{claim}
    \begin{proof}[Proof of Claim \ref{claim:f is injective}]
        Assume, for a contradiction, that $f(n) = f(m)$ for some $n > m$. Then $\deg_q(w_n) = \deg_q(w_m)$, so Lemma~\ref{lem:properties of deg(g)} implies that there exists $v \in \spn\{w_n,w_m\}$ with
        $$\deg_p(v) = n, \qquad \deg_q(v) < \deg_q(w_n) = f(n).$$
        This contradicts the definition of $f$.
    \end{proof}

    \begin{claim}\label{claim:f is sub-additive}
        $f(n + m) \leq f(n) + f(m)$ for all $n,m \in \deg_p(\g)$ with $n \neq m$.
    \end{claim}
    \begin{proof}[Proof of Claim \ref{claim:f is sub-additive}]
        For $n,m \in \deg_p(\g)$ with $n \neq m$, we have
        \begin{align*}
            \deg_p([w_n,w_m]) &= \deg_p(w_n) + \deg_p(w_m) = n + m, \\
            \deg_q([w_n,w_m]) &= \deg_q(w_n) + \deg_q(w_m) = f(n) + f(m),
        \end{align*}
        where we used that $\deg_q(w_n) \neq \deg_q(w_m)$ by Claim \ref{claim:f is injective}. It follows that $f(n + m) \leq f(n) + f(m)$ by definition of $f$.
    \end{proof}

    Using Lemma~\ref{lem:poles can be arbitrarily large}, choose $N \geq 2$ such that $\ZZ_{\geq N} \subseteq \deg_p(\g)$.

    \begin{claim}\label{claim:f is increasing}
        $f(n) \geq n$ for all $n \in \ZZ_{\geq N}$.
    \end{claim}
    \begin{proof}[Proof of Claim \ref{claim:f is increasing}]
        Assume, for a contradiction, that $f(n) < n$ for some $n \in \ZZ_{\geq N}$. For $t \in \ZZ_{\geq 2}$, define
        $$X_t \coloneqq \{b n + r \mid 2 \leq b \leq t, 1 \leq r \leq \min(n,b - 1)\}.$$
        For $2 \leq b \leq n + 1$, there are $b - 1$ integers $m$ that can be written as $m = b n + r$ for some $1 \leq r < b$. Therefore,
        $$|X_{n + 1}| = 1 + 2 + \dots + n = \frac{n(n + 1)}{2}.$$
        For $b \geq n + 2$, there are $n$ integers $m$ that can be written as $m = b n + r$ for some $1 \leq r \leq n$, and thus
        $$|X_t| = \frac{n(n + 1)}{2} + (t - (n + 1))n = n t - \frac{n(n + 1)}{2}$$
        for all $t \in \ZZ_{\geq n + 1}$.
        
        Let $m = b n + r \in X_t$ for some $t \in \ZZ_{\geq 2}$, where $2 \leq b \leq t$ and $1 \leq r \leq \min(n, b - 1)$. Notice that $m$ can be written as
        $$m = k n + r(n + 1),$$
        where $k \coloneqq b - r > 0$. Applying Claim \ref{claim:f is sub-additive} inductively, we have
        $$f(m) = f(b n + r) = f(k n + r (n + 1)) \leq k f(n) + r f(n + 1).$$
        Letting $c \coloneqq f(n + 1)$ and $d \coloneqq f(n) - f(n + 1)$ (so that $f(n) = c + d$), we deduce that
        $$f(m) \leq k(c + d) + rc = (k + r)c + kd = b c + kd.$$
        Define
        $$a \coloneqq \begin{cases}
            d, &\text{if } d > 0, \\
            nd, &\text{if } d < 0.
        \end{cases}$$
        Since $1 \leq r \leq n$ and $k = b - r$ by definition, we see that
        $$b - n \leq k \leq b - 1.$$
        If $d > 0$, then $k \leq b - 1$ implies that $kd \leq (b - 1)d$, so we deduce from the above that
        $$f(m) \leq bc + kd \leq bc + (b - 1)d = (c + d)b - d = f(n)b - a \leq (n - 1)b - a,$$
        since $f(n) \leq n - 1$ by assumption. If $d < 0$, then $k \geq b - n$ implies that $kd \leq (b - n)d$, so we similarly deduce that
        $$f(m) \leq bc + kd \leq bc + (b - n)d = (c + d)b - nd = f(n)b - a \leq (n - 1)b - a,$$
        where we once again used that $f(n) \leq n - 1$. In either case, we have $f(m) \leq (n - 1)b - a$. Therefore, we see that
        $$f(m) \leq (n - 1)t - a$$
        for all $t \in \ZZ_{\geq 2}$ and $m \in X_t$.

        Notice that $\deg_q(w_n) \geq -1$ for all $n \in \deg_p(\g) \cap \ZZ_{\geq 2}$, since no element of $\g_q'$ can have a pole at $p$ by assumption. In particular, this means that $f(\ZZ_{\geq N}) \subseteq \ZZ_{\geq -1}$. Define
        $$Y_t \coloneqq \{k \in \ZZ \mid -1 \leq k \leq (n - 1)t - a\}.$$
        Let $t$ be a large enough integer such that
        $$nt - \frac{n(n + 1)}{2} > (n - 1)t - a + 2.$$
        Then the restriction
        $$\restr{f}{X_t} \colon X_t \to Y_t$$
        is an injective function. This is a contradiction, because
        $$|X_t| = nt - \frac{n(n + 1)}{2} > (n - 1)t - a + 2 = |Y_t|.$$
        It follows that $f(n) \geq n$ for all $n \in \ZZ_{\geq N}$.
    \end{proof}

    \begin{claim}\label{claim:f is the identity}
        $f(n) = n$ for all $n \in \deg_p(\g)$.
    \end{claim}
    \begin{proof}[Proof of Claim \ref{claim:f is the identity}]
        Let $g \colon \deg_q(\g) \to \deg_p(\g)$ be the function one gets by swapping the roles of $p$ and $q$ above. Then Claims \ref{claim:f is injective}--\ref{claim:f is increasing} are true if we replace $f$ by $g$. More precisely, use Lemma~\ref{lem:poles can be arbitrarily large} to get an integer $N' \geq 2$ such that $\ZZ_{\geq N'} \subseteq \deg_q(\g)$. Then $g(n) \geq n$ for all $n \in \ZZ_{\geq N'}$, by Claim \ref{claim:f is increasing} applied to $g$.

        Recalling that $\deg_q(w_n) = f(n)$ and $\deg_p(w_n) = n$, it follows that
        \begin{equation}\label{eq:g(f(n)) is less than n}
            g(f(n)) = \min\{\deg_p(w) \mid w \in \g \text{ and } \deg_q(w) = f(n)\} \leq n
        \end{equation}
        for all $n \in \deg_p(\g)$. Let $M \coloneqq \max(N,N')$. Given $n \in \ZZ_{\geq M}$, we have $f(n) \geq n \geq M$, and thus
        $$n \geq g(f(n)) \geq f(n) \geq n$$
        for all $n \in \ZZ_{\geq M}$, where the first inequality follows from \eqref{eq:g(f(n)) is less than n}, the second and third ones follow from Claim \ref{claim:f is increasing} applied to $g$ and $f$, respectively. It follows that $f(n) = n$ for all $n \in \ZZ_{\geq M}$.

        Now $n \in \deg_p(\g)$, and let $m \in \deg_p(\g)$ such that $m \geq M$ and $n + m \geq M$. Applying the above, we get $f(n + m) = n + m$ and $f(m) = m$, and thus
        $$n + m = f(n + m) \leq f(n) + f(m) = f(n) + m,$$
        where we used Claim \ref{claim:f is sub-additive} to deduce that $f(n + m) \leq f(n) + f(m)$. It follows that $f(n) \geq n$ for all $n \in \deg_p(\g)$. Similarly, we also get that $g(n) \geq n$ for all $n \in \deg_q(\g)$. We conclude that
        $$n \geq g(f(n)) \geq f(n) \geq n$$
        for all $n \in \deg_p(\g)$, from which it follows that $f(n) = n$ for all $n \in \deg_p(\g)$.
    \end{proof}

    By the definition of $f$, Claim \ref{claim:f is the identity} implies that $\deg_p(w) \leq \deg_q(w)$ for all $w \in \g$. Swapping the roles of $p$ and $q$, we also get that $\deg_p(w) \geq \deg_q(w)$ for all $w \in \g$. In other words, we have $\deg_p(w) = \deg_q(w)$ for all $w \in \g$. This contradicts Lemma~\ref{lem:degrees at different points are different}, which concludes the proof.
\end{proof}

To prove that $D_p = D \cup \{p\}$, we need to show that $\g_p$ has poles at every point in $\widetilde{D} \setminus D$ other than $p$ (otherwise $D_p$ might be bigger). In other words, we must show that there exist vector fields in $\g$ with poles at every point in $\widetilde{D} \setminus (D \cup \{p\})$, but no pole at $p$. We now show that there is at least one choice of $p \in \widetilde{D} \setminus D$ where this is the case.

\begin{lemma}\label{lem:good point}
    There exist $p \in \widetilde{D} \setminus D$ and $w \in \g$ such that
    $$\ord_p(w) \geq 0, \qquad \ord_q(w) < 0$$
    for all $q \in \widetilde{D} \setminus D$ with $q \neq p$. In other words, there is an element of $\g$ with poles at every point in $\widetilde{D} \setminus D$ except for $p$.
\end{lemma}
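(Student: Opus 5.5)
We will prove Lemma~\ref{lem:good point} by combining Proposition~\ref{prop:dichotomy of poles at p and q} with a tournament-type argument on $P \coloneqq \widetilde{D} \setminus D$, which has at least two points. For distinct $p,q \in P$, write $p \to q$ if some $w \in \g_p'$ has a pole at $q$. Proposition~\ref{prop:dichotomy of poles at p and q} says that for every pair $p \neq q$ we have $p \to q$ or $q \to p$. We want a single point $p$ with $p \to q$ for \emph{all} $q \neq p$, realised by one element of $\g$. Such an element lies in $\g_p' \subseteq \g_p$, so it has $\ord_p \geq 0$ and poles at every other point of $P$.

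\textbf{Step 1 (combining witnesses at a fixed $p$).} Fix $p$. Suppose for each $q$ with $p \to q$ we have $w_q \in \g_p'$ with $\deg_q(w_q) \geq 2$. We will build a single $w \in \g_p'$ with poles at all such $q$. Here $\g_p'$ is a subspace, since $\deg_p \leq -2$ is preserved under linear combinations. A generic linear combination $\sum \lambda_q w_q$ has a pole at every such $q$: at each $q$, the coefficient of the leading polar term is a nonzero linear form in the $\lambda$'s, so avoiding finitely many hyperplanes works. This is the only place where linear combinations are needed.

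\textbf{Step 2 (choosing $p$).} In any tournament there is a ``king''. We need more, though: a vertex beating \emph{everyone}, which need not exist in a general tournament. So we must exploit transitivity-like properties of $\to$. The key claim is that if $p \to q$ and $q \to r$ with $r \neq p$, then $p \to r$ or we can derive a contradiction. The natural attempt goes as follows. Take $u \in \g_p'$ with a pole at $q$, and $v \in \g_q'$ with a pole at $r$. By repeatedly bracketing with elements of very negative degree at $p$ (using Lemma~\ref{lem:degrees form a semigroup}), and with elements of very large degree at $r$ (Lemma~\ref{lem:poles can be arbitrarily large}), we try to produce an element of $\g_p'$ with a pole at $r$. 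Concretely, consider $[u^{(k)}, v]$, where $u^{(k)} = \ad_u^k(x)$ for suitable $x$. Degrees at $p$ add, so they become $\leq -2$ once the multiple of $\deg_p(u)$ dominates. Degrees at $r$ also add, so we need the $r$-degree to stay $\geq 2$. To arrange this, we pick the multiplicities so that the total $r$-degree is large, which is possible since $\deg_r(v) \geq 2$ can be iterated via $\ad_v$. The constraint is that $\ad_v$ raises the $p$-degree by $\deg_p(v)$. We therefore balance the two operations by taking many brackets with $u$ and with $v$ in suitable proportions, keeping degrees pairwise distinct so that additivity holds.

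\textbf{Step 3 (conclusion).} Once $\to$ is shown to be ``transitive enough'', a maximal element for $\to$ exists. Take a vertex $p$ with maximal out-degree. If some $q$ has $q \to p$ and not $p \to q$, transitivity would give $q$ a larger out-degree, which is a contradiction. Hence $p \to q$ for all $q \neq p$, and Step 1 finishes the proof.

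The main obstacle is Step 2. It requires controlling $\deg_p$, $\deg_q$ and $\deg_r$ simultaneously under iterated brackets while keeping the degrees distinct at each step so that Lemma~\ref{lem:degrees form a semigroup} applies. If the direct approach fails, the fallback is to rerun the counting argument of Proposition~\ref{prop:dichotomy of poles at p and q} with more than two points.
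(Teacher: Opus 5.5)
The gap is Step~2, and the rest of the proposal depends on it. Your conclusion in Step~3 needs the relation $\to$ to be transitive. You do not prove this, and the bracket-balancing sketch cannot work in general.

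Take $u \in \g_p'$ with a pole at $q$ and $v \in \g_q'$ with a pole at $r$. Iterated brackets in $u$ and $v$ have $(\deg_p,\deg_r)$ equal to a fixed offset plus $k(\deg_p u,\deg_r u)+l(\deg_p v,\deg_r v)$ with $k,l\ge 0$. Nothing controls $\deg_r(u)$ or $\deg_p(v)$. For instance, suppose $(\deg_p u,\deg_r u)=(-2,-10)$ and $(\deg_p v,\deg_r v)=(10,2)$. Then making the $p$-degree at most $-2$ forces $l/k$ below roughly $1/5$, while making the $r$-degree at least $2$ forces $l/k$ above roughly $5$. No choice of proportions works, and a good offset would itself be the element you are looking for. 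Lemma~\ref{lem:poles can be arbitrarily large} gives large $r$-degrees only with uncontrolled $p$-degree, so it does not help. Transitivity does hold a posteriori, since after Theorem~\ref{thm:main} one has $p\to q$ for every pair, but using that here is circular. The ``fallback'' is not an argument, so as written the proposal does not prove the lemma.

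The missing idea is that you need neither transitivity of a bracket-defined relation nor $\deg_p(w)\le -2$; the lemma only asks for $\ord_p(w)\ge 0$. The paper uses only the sign information from Proposition~\ref{prop:dichotomy of poles at p and q}:
\begin{itemize}
\item For each pair $r\neq s$ in $\widetilde{D}\setminus D$, fix one $w_{rs}\in\g$ that is regular at one of $r,s$ and has a pole at the other.
\item For $x\in\widetilde{D}\setminus D$, let $S_x$ be the set of pairs $\{r,s\}$ with $\ord_x(w_{rs})<0$.
\item Transitivity now comes for free from set inclusion. Suppose some $q\neq p$ has $S_q\subseteq S_p$, meaning no witness is regular at $p$ with a pole at $q$. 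Then $w_{pq}$ must have its pole at $p$ and not at $q$, so $S_q\subsetneq S_p$.
\item Hence choosing $p$ with $|S_p|$ minimal gives, for every $q\neq p$, a witness regular at $p$ with a pole at $q$.
\end{itemize}
Your Step~1 then finishes the proof, as a generic linear combination inside the subspace $\{w\in\g \mid \ord_p(w)\ge 0\}$.
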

\begin{proof}
    For every $p,q \in \widetilde{D} \setminus D$ with $p \neq q$, we can use Proposition~\ref{prop:dichotomy of poles at p and q} to choose $w_{pq} \in \g$ such that $\ord_{p}(w_{pq})$ and $\ord_{q}(w_{pq})$ have different signs (where $w_{pq} = w_{qp}$). Call a point $p \in \widetilde{D} \setminus D$ \emph{good} if for all $q \in \widetilde{D} \setminus D$ with $q \neq p$, there exist $r,s \in \widetilde{D} \setminus D$ with $r \neq s$ such that $\ord_{p}(w_{rs}) \geq 0$ and $\ord_{q}(w_{rs}) < 0$. Otherwise, $p$ is \emph{bad}.

    We claim that there is at least one good point in $\widetilde{D} \setminus D$. For $p \in \widetilde{D} \setminus D$, define
    $$m_p \coloneqq \left.\left|\Big\{\{r,s\} \subseteq \widetilde{D} \setminus D \ \right| r \neq s \text{ and } \ord_p(w_{rs}) < 0\Big\}\right|.$$
    Suppose $p$ is bad. Then there must exist $q \in \widetilde{D} \setminus D$ with $q \neq p$ such that $\ord_{p}(w_{rs}) \geq 0$ implies that $\ord_{q}(w_{rs}) \geq 0$ for all $r,s \in \widetilde{D} \setminus D$. Since $\ord_{p}(w_{pq})$ and $\ord_{q}(w_{pq})$ have different signs, it follows that $\ord_{p}(w_{pq}) < 0$ and $\ord_{q}(w_{pq}) \geq 0$. In particular, we see that $m_p > m_q$. Therefore, if we choose $p \in \widetilde{D} \setminus D$ such that $m_p$ is minimal, then $p$ must be good.

    Fix a good point $p \in \widetilde{D}$, and let $u,v \in \L_D$. By Lemma~\ref{lem:properties of deg(g)}\eqref{item:maximum condition}, if $\ord_{r}(u) < 0$ and $\ord_{s}(v) < 0$, then there exists $w \in \spn\{u,v\}$ with $\ord_{r}(w) < 0$ and $\ord_{s}(w) < 0$. Therefore, since $p$ is good, there exists $w \in \spn\{w_{ij} \mid i \neq j\}$ such that $\ord_p(w) \geq 0$ and $\ord_{q}(w) < 0$ for all $q \in \widetilde{D} \setminus D$ with $q \neq p$. This concludes the proof.
\end{proof}

For the rest of this section, we fix a point $p \in \widetilde{D} \setminus D$ as in Lemma~\ref{lem:good point}. In other words, $p$ is a point such that the Lie algebra $\g_p$ has poles at every point in $\widetilde{D} \setminus (D \cup \{p\})$. We now show that $\g_p$ is infinite-dimensional.

\begin{lemma}\label{lem:g_p is infinite-dimensional}
    The set $\deg_p(\g)$ is unbounded below. Consequently, $\g_p$ is infinite-dimensional.
\end{lemma}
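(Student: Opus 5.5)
We use the element supplied by Lemma~\ref{lem:good point}: fix $w \in \g$ with $\ord_p(w) \geq 0$ (equivalently $\deg_p(w) \leq 1$) and $\ord_q(w) < 0$ (equivalently $\deg_q(w) \geq 2$) for every $q \in \widetilde{D} \setminus (D \cup \{p\})$. Since $|\widetilde{D} \setminus D| \geq 2$, there is at least one such $q$. The aim is to produce elements of $\g$ whose $p$-degrees tend to $-\infty$ by bracketing repeatedly with a fixed element of negative $p$-degree, using the additivity of Lemma~\ref{lem:degrees form a semigroup}.

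The first step is to show that $\g$ contains an element of negative $p$-degree.
\begin{itemize}
    \item If $\deg_p(w) \leq -1$, take $w$ itself.
    \item Otherwise $\deg_p(w) \in \{0,1\}$. We then look for $v \in \g$ with $\deg_p(v) \neq \deg_p(w)$ and $\deg_p(v) + \deg_p(w) < \deg_p(w)$, so that $\deg_p([w,v]) = \deg_p(v)+\deg_p(w)$ is strictly smaller.
\end{itemize}
The second case is the main obstacle, and I would use Proposition~\ref{prop:dichotomy of poles at p and q} there. For each $q \neq p$, it gives either an element of $\g_p'$, which has $p$-degree $\leq -2$ and so we are done, or an element $u \in \g_q'$ with a pole at $p$. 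In the latter case, bracket $u$ with $w$. We have $\deg_q(u) \leq -2$ and $\deg_q(w) \geq 2$, so the $q$-degrees differ and the bracket has $q$-degree $\deg_q(u)+\deg_q(w)$. At $p$, $\deg_p(u) \geq 2 > 1 \geq \deg_p(w)$, so the $p$-degrees also differ and add. This alone does not push the $p$-degree down.

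I therefore expect to need a different route. I would try to find $v \in \g$ with $\deg_p(v) \leq -1$ by running the argument of Proposition~\ref{prop:dichotomy of poles at p and q} for $p$ alone: if $\deg_p(\g)$ were bounded below by $-1$, then every element with large degree at the points $q \neq p$ would be regular at $p$. Combined with the fact that $d_p(\g)=1$ and that the $p$-degrees become arbitrarily large (Lemma~\ref{lem:poles can be arbitrarily large}), this should yield a counting contradiction like the one in Claim~\ref{claim:f is increasing}. If this can be made to work, it should produce an element of $\g_p'$.

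Once some $u_0 \in \g$ has $\deg_p(u_0) = -k \leq -1$, the remaining steps are routine:
\begin{enumerate}
    \item Take any $v \in \g$ with $\deg_p(v) \neq -k$, which exists by Lemma~\ref{lem:poles can be arbitrarily large}.
    \item By induction and Lemma~\ref{lem:degrees form a semigroup}, $\deg_p(\ad_{u_0}^n(v)) = \deg_p(v) - nk$, because at each stage the $p$-degrees of $u_0$ and $\ad_{u_0}^{n-1}(v)$ differ. This degree tends to $-\infty$, so $\deg_p(\g)$ is unbounded below.
    \item For $n$ large, $\ad_{u_0}^n(v)$ has $p$-degree $\leq 1$, so it lies in $\g_p$. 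These elements have pairwise distinct $p$-degrees and are therefore linearly independent, so $\g_p$ is infinite-dimensional.
\end{enumerate}
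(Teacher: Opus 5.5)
Your proposal has a genuine gap at the only non-routine step: you never produce an element of $\g$ with negative $p$-degree. You correctly see that the second alternative of Proposition~\ref{prop:dichotomy of poles at p and q} does not help. What follows (``should yield'', ``if this can be made to work'') is not an argument, and its intermediate claim points the wrong way. If $\deg_p(\g)$ is bounded below, a count in the style of Claim~3 shows that elements of large $q$-degree have \emph{large} $p$-degree, not that they are regular at $p$. Moreover, the count must take $q$-degrees as input, so the relevant fact is Lemma~\ref{lem:poles can be arbitrarily large} at $q$, not anything about $p$-degrees.

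The paper obtains the missing element from the way $p$ was chosen. It takes the dichotomy elements $w_{pq}$ from the proof of Lemma~\ref{lem:good point}, infers from the minimality of $m_p$ that some $w_{pq}$ lies in $\g_p'$, and then iterates $\ad_{w_{pq}}$ exactly as you do. That inference is clear with two or three poles. With four or more, minimality of $m_p$ does not by itself force it: the $w_{rs}$ can be chosen so that every $w_{pq}$ has its pole at $p$ while $m_p$ is still the unique minimum.

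Your idea can be completed using only your $w$. Fix $q \in \widetilde{D}\setminus(D\cup\{p\})$ and suppose $\deg_p(\g)\subseteq \ZZ_{\geq -c}$. Then $g(n)\coloneqq\min\{\deg_p(x)\mid x\in\g,\ \deg_q(x)=n\}$ is defined on $\deg_q(\g)$. This is the function $f$ of Proposition~\ref{prop:dichotomy of poles at p and q} with $p$ and $q$ interchanged. Claims~1--3 there go through verbatim, since that proposition's hypotheses were used only to make the minima exist and to bound the values below, and both now come from $-c$. Hence $g(n)\geq n$ for all $n\gg 0$.

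Now take $v\in\g$ with $\deg_q(v)>\deg_q(w)$ and set $y_k\coloneqq\ad_w^k(v)$. The $q$-degrees always differ, so $y_k\neq 0$ and $\deg_q(y_k)=\deg_q(v)+k\deg_q(w)$. Both cases of Lemma~\ref{lem:degrees form a semigroup} give $\deg_p(y_k)\leq\deg_p(v)+k\deg_p(w)$. Since $\deg_q(w)-\deg_p(w)\geq 1$, this gives $g(\deg_q(y_k))<\deg_q(y_k)$ for $k\gg 0$, a contradiction. So $\deg_p(\g)$ is unbounded below directly.

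This route also avoids a slip in your step (2). The condition $\deg_p(v)\neq -k$ does not keep the $p$-degrees distinct along the iteration; you need $k\geq 2$ and $k\nmid\deg_p(v)$, which is why the paper works with $p$-degree $\leq -2$ and a coprime $\deg_p(v)$. For instance, in $\WW_1$ at $0$ one has $\ad_{e_1}^3(e_{-1})=0$.
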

\begin{proof}
    Keep the notation $w_{pq} \in \g$ and $m_p$ from the proof of Lemma~\ref{lem:good point}. Note that Proposition~\ref{prop:dichotomy of poles at p and q} allows us to choose each $w_{pq}$ so that $\deg_p(w_{pq}) \geq 3$ or $\deg_q(w_{pq}) \geq 3$ for every $q \in \widetilde{D} \setminus (D \cup \{p\})$.
    
    The point $p$ is constructed in the proof of Lemma~\ref{lem:good point} by choosing $m_p$ to be minimal. This means that there exists at least one point $q \in \widetilde{D} \setminus D$ with $p \neq q$ such that $\ord_p(w_{pq}) \geq 3$, meaning $\deg_p(w_{pq}) \leq -2$. Let $w \coloneqq w_{pq}$, and choose $v \in \g$ so that $\deg_p(v)$ is coprime to $\deg_p(w)$ (which exists by Lemma~\ref{lem:poles can be arbitrarily large}). Now, Lemma~\ref{lem:degrees form a semigroup} gives
    $$\deg_p(\ad_w^n(v)) = n\deg_p(w) + \deg_p(v) \leq \deg_p(v) - 2n$$
    for all $n \in \NN$, since $\deg_p(w) \leq -2$. This proves that $\deg_p(\g)$ is unbounded below.
    
    For the final sentence of the statement, the fact that $\deg_p(\g)$ is unbounded below gives infinitely many linearly independent elements of $\g_p$.
\end{proof}

Next, we prove that $\g_p$ has the same field of ratios as $\g$, meaning we do not lose too much when passing from $\g$ to $\g_p$.

\begin{proposition}\label{prop:field of g_p}
    We have $F(\g_p) = \kk(D)$.
\end{proposition}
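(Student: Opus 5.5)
Since $\g_p \subseteq \g$, we have $R(\g_p) \subseteq R(\g)$ and hence $F(\g_p) \subseteq F(\g) = \kk(D)$. So the plan is to prove the reverse inclusion $\kk(D) \subseteq K$, where $K \coloneqq F(\g_p)$. For this it suffices to fix one nonzero $w \in \g_p$ (which exists by Lemma~\ref{lem:g_p is infinite-dimensional}) and show that $x \coloneqq u/w \in K$ for every $u \in \g$. Here we identify $\g \subseteq \kk(D)\del$ by Lemma~\ref{lem:Q_C is one-dimensional}, so that $u = xw$ with $x \in \kk(D)$. Granting this, every element of $R(\g)$ has the form $u/v = (u/w)/(v/w)$, which lies in $K$, and so $F(\g) \subseteq K$.

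To show $x \in K$, I would bracket $u$ against suitable elements of $\g_p$ of very negative $p$-degree. By Lemma~\ref{lem:degrees form a semigroup}, in all cases we have $\deg_p([a,b]) \leq \deg_p(a) + \deg_p(b)$ (or $[a,b] = 0$). Lemma~\ref{lem:g_p is infinite-dimensional} says that $\deg_p(\g)$ is unbounded below. Hence I can choose $a_1, a_2 \in \g$ with distinct $p$-degrees, both at most $1 - \deg_p(u)$ and also at most $1$. Then $a_1, a_2 \in \g_p$, and $[a_i,u] \in \g_p$ as well. Write $a_i = y_i w$ with $y_i = a_i/w \in R(\g_p) \subseteq K$. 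A direct computation (with $w$ acting as a derivation of $\kk(D)$) gives
$$[y_i w, x w] = \big(y_i\, w(x) - x\, w(y_i)\big) w.$$
Since $[a_i,u]$ and $w$ both lie in $\g_p$, we get $c_i \coloneqq y_i\, w(x) - x\, w(y_i) \in R(\g_p) \subseteq K$ for $i = 1,2$. Moreover $w(y_i) \in K$, because $\g_p$ preserves $K$ by Lemma~\ref{lem:g acts on F(g)}.

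This gives a $2\times 2$ linear system over $K$ in the two unknowns $x$ and $w(x)$. Its determinant is, up to sign, $y_1 w(y_2) - y_2 w(y_1) = y_1^2\, w(y_2/y_1)$. The main point is to check that this is nonzero. Since $a_1, a_2$ have different $p$-degrees, $y_2/y_1 = a_2/a_1$ has nonzero order at $p$, so it is a nonconstant element of $\kk(D)$. A nonzero derivation of $\kk(D)$ kills only the algebraic closure of $\kk$ in $\kk(D)$, which is $\kk$ itself. Therefore $w(y_2/y_1) \neq 0$ and the determinant is nonzero. Cramer's rule then expresses $x$ as a $K$-rational combination of $c_1, c_2, y_1, y_2, w(y_1), w(y_2)$, so $x \in K$.

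I expect no serious obstacle. The only delicate step is producing the two auxiliary elements $a_1, a_2$ with low and distinct $p$-degrees, and this is exactly what Lemma~\ref{lem:g_p is infinite-dimensional} provides. Everything else is the bracket identity and the fact that constants of a nonzero derivation of $\kk(D)$ are $\kk$.
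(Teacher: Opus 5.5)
Your proof is correct, and it takes a genuinely different route from the paper's.

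\textbf{The paper's route.} The paper argues geometrically, via the morphism $\pi\colon\widetilde{D}\to\widetilde{D}_p$ induced by $F(\g_p)\subseteq F(\g)$, in three steps.
\begin{enumerate}
\item A minimal-$p$-degree bracketing argument, using Lemma~\ref{lem:ramification} and Lemma~\ref{lem:poles can be arbitrarily large}, shows that $\pi$ is unramified at every $q\in\widetilde{D}\setminus(D\cup\{p\})$.
\item The element of $\g_p$ with poles away from $p$ (Lemma~\ref{lem:good point}), combined with Lemma~\ref{lem:degrees at different points are different}, shows that $\pi^{-1}(\pi(q))=\{q\}$.
\item The fibre-degree formula then gives $\deg\pi=e_q=1$.
\end{enumerate}

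\textbf{Your route.} You express $x=u/w$ as the solution of a $2\times 2$ linear system over $F(\g_p)$. Its determinant is the Wronskian $y_1w(y_2)-y_2w(y_1)$, which is nonzero because a nonzero derivation of $\kk(D)$ has constants $\kk$.

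\textbf{Comparison.} Your argument is shorter and purely algebraic. The only input it needs is that $\deg_p(\g)$ is unbounded below (Lemma~\ref{lem:g_p is infinite-dimensional}), which the paper's proof also uses. It does not need the good-point property of $p$, ramification theory, or Lemma~\ref{lem:degrees at different points are different}. In fact it proves more: $F(\h)=F(\g)$ for any subalgebra $\h\subseteq\g$ such that every $u\in\g$ is bracketed into $\h$ by two elements of $\h$ with non-constant ratio. So the good-point choice of $p$ is genuinely needed only afterwards, in Corollary~\ref{cor:g_p has poles everywhere}, to identify $D_p$. The paper's route does give extra geometric information, namely that $\pi$ is unramified and injective over the poles of $\g_p$, but that is used only to reach the same conclusion.

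\textbf{A small simplification.} You can get $w(y_i)\in F(\g_p)$ directly from $[w,a_i]=w(y_i)\,w\in\g_p$, so you need not invoke Lemma~\ref{lem:g acts on F(g)}.
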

\begin{proof}
    Let $\pi \colon \widetilde{D} \to \widetilde{D}_p$ be the dominant morphism induced by the inclusion $\kk(\widetilde{D}_p) \hookrightarrow \kk(\widetilde{D})$. Given a point $q \in \widetilde{D}$, we let $e_q$ be the ramification index of $\pi$ at $q$.

    Let $q \in \widetilde{D} \setminus (D \cup \{p\})$. Letting $x \coloneqq \pi(q) \in \widetilde{D}_p$, it follows from Lemma~\ref{lem:ramification} that
    $$\deg_q(v) = e_q \deg_x(v)$$
    for all $v \in \g_p$. Therefore, $\deg_q(v)$ is a multiple of $e_q$ for all $v \in \g_p$.

    Assume, for a contradiction, that $e_q \geq 2$. Choose $u \in \g$ with $\deg_p(u) \leq -1$, which exists by Lemma~\ref{lem:g_p is infinite-dimensional}. Note that $\deg_q(u)$ is a multiple of $e_q$ by the above, since $\deg_p(u) \leq -1$ implies that $u \in \g_p$. Let $v \in \g$ such that $\deg_q(v)$ is coprime to $e_q$, which exists by Lemma~\ref{lem:poles can be arbitrarily large}. Notice that $\deg_p(v) \geq 2$, since an element whose $p$-degree is less than or equal to $1$ is necessarily an element of $\g_p$. Therefore, we may choose $v$ so that it has minimal $p$-degree among the elements of $\g$ whose $q$-degree is coprime to $e_q$. Then $[u,v] \in \g$, and
    $$\deg_q([u,v]) = \deg_q(u) + \deg_q(v)$$
    is coprime to $e_q$. However, we also have
    $$\deg_p([u,v]) = \deg_p(u) + \deg_p(v) < \deg_p(v),$$
    which contradicts the minimality of $\deg_p(v)$. Therefore, we must have $e_q = 1$, in other words, $\pi$ is unramified at $q$. Notice that this holds for all points in $\widetilde{D} \setminus (D \cup \{p\})$, since $q$ was chosen arbitrarily. In other words, $\pi$ is unramified at all points of $\widetilde{D}$ where $\g_p$ has a pole.

    We claim that $\pi^{-1}(x) = \pi^{-1}(\pi(q)) = \{q\}$. Assume, for a contradiction, that $\pi(r) = \pi(q)$ for some $r \in \widetilde{D} \setminus \{q\}$. It follows by the above that
    $$\deg_r(v) = e_r\deg_x(v) = e_r\deg_q(v)$$
    for all $v \in \g_p$. By Lemma~\ref{lem:good point}, there exists $w \in \g_p$ such that $\deg_q(w) \geq 2$. This implies that
    $$\deg_r(w) = e_r \deg_q(w) \geq 2e_r \geq 2,$$
    so $w$ has a pole at $r$, and thus $r \in \widetilde{D} \setminus (D \cup \{p,q\})$. If $|\widetilde{D} \setminus D| = 2$, then $\widetilde{D} = D \cup \{p,q\}$, so this is a contradiction.
    
    Otherwise, if $|\widetilde{D} \setminus D| \geq 3$, we need to work a bit harder. Since $\pi$ is unramified at the poles of $\g_p$, it follows that $e_r = 1$, so $\deg_q(v) = \deg_r(v)$ for all $v \in \g_p$. Using Lemma~\ref{lem:degrees at different points are different}, choose $v \in \g$ with $\deg_q(v) \neq \deg_r(v)$, and let $u \in \g_p$ such that $\deg_p(u) \leq 1 - \deg_p(v)$ (which exists by Lemma~\ref{lem:g_p is infinite-dimensional}). Then $[u,v] \in \g$ and
    $$\deg_p([u,v]) = \deg_p(u) + \deg_p(v) \leq 1,$$
    so $[u,v] \in \g_p$. Furthermore, since $u \in \g_p$, it follows from the above that $\deg_q(u) = \deg_r(u)$, and thus
    $$\deg_q([u,v]) = \deg_q(u) + \deg_q(v) \neq \deg_r(u) + \deg_r(v) = \deg_r([u,v]).$$
    Therefore, $[u,v]$ is an element of $\g_p$ whose $q$-degree is not equal to its $r$-degree, which is a contradiction. It follows that $\pi^{-1}(\pi(q)) = \{q\}$, as claimed.

    Now, by \cite[Proposition II.2.6(a)]{Silverman}, we have
    $$\deg(\pi) = \sum_{r \in \pi^{-1}(x)} e_r = e_q = 1,$$
    since $\pi^{-1}(x) = \{q\}$ and $e_q = 1$. Therefore, $\pi$ is an isomorphism, meaning the inclusion $\kk(\widetilde{D}_p) \subseteq \kk(\widetilde{D})$ is in fact an equality. In other words, $F(\g_p) = \kk(\widetilde{D}_p) = \kk(\widetilde{D}) = \kk(D)$, which concludes the proof.
\end{proof}

Finally, we now have all the necessary ingredients to prove that $D_p = D \cup \{p\}$. This is the final result we need to prove Theorem~\ref{thm:main}.

\begin{corollary}\label{cor:g_p has poles everywhere}
    We have $D_p = D \cup \{p\}$. Consequently, $\g_p$ has finite codimension in $\L_{D \cup \{p\}}$.
\end{corollary}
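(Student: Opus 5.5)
By Proposition~\ref{prop:field of g_p}, we have $F(\g_p) = \kk(D)$. Hence the projective curve associated to $\g_p$ is $\widetilde{D}$ itself, so $\widetilde{D}_p = \widetilde{D}$, and the morphism $\pi$ from that proof is the identity. The curve $D_p$ is then the set of points of $\widetilde{D}$ at which every element of $\g_p$ is regular. We therefore only need to compare the pole sets of $\g_p$ and $\g$ on the same curve $\widetilde{D}$, which we do via two inclusions.

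First, $D \cup \{p\} \subseteq D_p$. Since $\g_p \subseteq \g$, every element of $\g_p$ is regular at every point of $D$. By definition, $\g_p = \{w \in \g \mid \ord_p(w) \geq 0\}$, so every element of $\g_p$ is also regular at $p$.

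Second, $D_p \subseteq D \cup \{p\}$. Let $q \in \widetilde{D} \setminus (D \cup \{p\})$. The point $p$ was fixed as in Lemma~\ref{lem:good point}, so there is $w \in \g$ with $\ord_p(w) \geq 0$ and $\ord_q(w) < 0$. The first condition gives $w \in \g_p$, and the second shows that $\g_p$ has a pole at $q$. Hence $q \notin D_p$, and so $D_p = D \cup \{p\}$.

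For the consequence, we apply the inductive hypothesis to $\h = \g_p$. This requires $\g_p$ to be an infinite-dimensional subalgebra of $\L_C$, which holds by Lemma~\ref{lem:g_p is infinite-dimensional}; note also that the constructions in Section~\ref{sec:new curve D} apply to $\g_p$. We have $|\widetilde{D}_{\g_p} \setminus D_{\g_p}| = |\widetilde{D} \setminus D| - 1$, which is strictly smaller than $|\widetilde{D} \setminus D|$. The inductive hypothesis therefore gives that $\g_p$ has finite codimension in $\L_{D_p} = \L_{D \cup \{p\}}$.

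The one point needing care is the identification $\widetilde{D}_p = \widetilde{D}$. The subalgebra $\g_p$ lives in $\L_C$, and its associated curve is defined only up to isomorphism through the field $F(\g_p)$. We must check that the points and orders on $\widetilde{D}_p$ coincide with those on $\widetilde{D}$. This follows because both curves are the smooth projective model of the same subfield $\kk(D)$ of $\kk(C)$, so the orders of vector fields are computed by the same valuations. Otherwise the argument is a direct combination of the earlier results.
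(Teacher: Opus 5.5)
Your proof is correct and follows the paper's argument: Proposition~\ref{prop:field of g_p} gives $\widetilde{D}_p = \widetilde{D}$, the inclusion $D \cup \{p\} \subseteq D_p$ is immediate from $\g_p \subseteq \L_{D \cup \{p\}}$, the single element from Lemma~\ref{lem:good point} gives a pole of $\g_p$ at every point of $\widetilde{D} \setminus (D \cup \{p\})$, and the inductive hypothesis then yields the finite codimension. You also make explicit two details the paper leaves implicit: that $\g_p$ is infinite-dimensional (Lemma~\ref{lem:g_p is infinite-dimensional}), and that $\widetilde{D}_p$ and $\widetilde{D}$ are the same smooth model of the same subfield of $\kk(C)$, so orders are computed by the same valuations.
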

\begin{proof}
     Note that $\g_p$ is a subalgebra of $\L_{D \cup \{p\}}$ by definition. By Proposition~\ref{prop:field of g_p}, we know that $F(\g_p) = \kk(D)$, from which it follows that $\widetilde{D}_p = \widetilde{D}$. Since the element $w \in \g_p$ from Lemma~\ref{lem:good point} has poles at every point in $\widetilde{D} \setminus (D \cup \{p\})$, we conclude that $D_p = D \cup \{p\}$. The final sentence of the statement follows from the inductive hypothesis.
\end{proof}

We conclude this section by proving Theorem~\ref{thm:main}.

\begin{proof}[Proof of Theorem~\ref{thm:main}]
    As mentioned at the beginning of this section, we proceed by induction on $d \coloneqq |\widetilde{D} \setminus D|$. The base case $d = 1$ is Proposition~\ref{prop:one point with poles}.
    
    For the inductive step, assume $d \geq 2$ and suppose we have proved that any subalgebra $\h \subseteq \L_C$ with $|\widetilde{D}_\h \setminus D_\h| < d$ has finite codimension in $\L_{D_\h}$.

    To prove that $\g$ has finite codimension in $\L_D$, it suffices to prove that $\dim(\gr_n \g) = d$ for all $n \gg 0$, by Lemma~\ref{lem:finite codimension filtration} and Proposition~\ref{prop:dimension of graded pieces}. By Corollary~\ref{cor:g_p has poles everywhere}, the subalgebra $\g_p$ has finite codimension in $\L_{D \cup \{p\}}$, where $p \in \widetilde{D} \setminus D$ is a point satisfying the properties of Lemma~\ref{lem:good point}. It follows by Lemma~\ref{lem:finite codimension filtration} that there exists $k \in \ZZ$ such that $\gr_n \g_p \cong \gr_n \L_{D \cup \{p\}}$ for all $n \geq k$. Proposition~\ref{prop:dimension of graded pieces} then implies that $\dim(\gr_n \g_p) = d - 1$ for all $n \geq k$. In fact, Proposition~\ref{prop:dimension of graded pieces} also gives a description of a basis of $\gr_n \g_p$ for $n \geq k$: there exist elements $v_n^q \in \g_p$ with the property that $\deg_q(v_n^q) = n$ and $\deg_r(v_n^q) < n$ for all $r \in \widetilde{D} \setminus \{p,q\}$. Then the set $\{v_n^q + \g_p^{\leq n - 1} \mid q \in \widetilde{D} \setminus (D \cup \{p\})\}$ is a basis for $\gr_n \g_p$. Notice that, since $v_n^q \in \g_p$, we have $\deg_p(v_n^q) \leq 1$. In particular, if $n \geq 2$ then we also have $\deg_p(v_n^q) < n$.
    
    Thanks to Proposition~\ref{prop:dimension of graded pieces}, to prove that $\dim(\gr_n \g) = d$ for $n \gg 0$ it suffices to construct an element $v_n^p \in \g$ such that $\deg_p(v_n^p) = n$ and $\deg_q(v_n^p) < n$ for all $q \in \widetilde{D} \setminus (D \cup \{p\})$. By Lemma~\ref{lem:poles can be arbitrarily large}, there exists $N \in \ZZ$ such that $\ZZ_{\geq N} \subseteq \deg_p(\g)$.
    
    Let $n \geq \max(N,k,2)$. By construction, there exists $w \in \g$ such that $\deg_p(w) = n$. Now, Lemma~\ref{lem:properties of deg(g)} implies that there exists
    $$v_n^p \in \spn\{w,v_m^q \mid q \in \widetilde{D} \setminus (D \cup \{p\}), m \geq n\}$$
    such that $\deg_p(v_n^p) = n$ and $\deg_q(v_n^p) < n$ for all $q \in \widetilde{D} \setminus (D \cup \{p\})$. It follows that the set $\{v_n^q + \g^{\leq n - 1} \mid q \in \widetilde{D} \setminus D\}$ is a basis for $\gr_n \g$, so $\dim(\gr_n \g) = d$. This concludes the proof.
\end{proof}

The rest of the paper is devoted to presenting some applications of Theorem~\ref{thm:main}.

\section{Dixmier property of Krichever--Novikov algebras}\label{sec:Dixmier}

In this section, we characterize precisely when a Krichever--Novikov algebra satisfies the \emph{Dixmier property}, defined below.

\begin{definition}\label{def:Dixmier}
    We say that an (associative or Lie) algebra $A$ \emph{satisfies the Dixmier property} if all injective endomorphisms of $A$ are automorphisms.
\end{definition}

\begin{example}\label{ex:Dixmier}
    The one-sided Witt algebra $\WW_1 = \L_{\Aa^1} = \Der(\kk[t])$ satisfies the Dixmier property. This was shown in \cite{Du} and \cite[Corollary 4.4]{BellBuzaglo}.

    On the other hand, the full Witt algebra $W = \L_{\Aa^1 \setminus \{0\}} = \Der(\kk[t,t^{-1}])$ does not satisfy the Dixmier property. This is most easily seen by the existence of the injective endomorphism
    \begin{align*}
        \varphi_k \colon W &\to W \\
        e_n &\mapsto \frac{1}{k} e_{kn}
    \end{align*}
    for all positive integers $k$.
\end{example}

In fact, the Witt algebra $W$ is the only Lie algebra of vector fields on a curve which does not satisfy the Dixmier property. We state this as a theorem below, and devote the rest of this section to proving it.

\begin{theorem}\label{thm:Dixmier}
    Let $C$ be a smooth affine curve. Then $\L_C$ satisfies the Dixmier property if and only if $C \not\cong \Aa^1 \nonzero$.
\end{theorem}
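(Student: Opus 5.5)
The plan is to reduce the statement to showing that every nonzero (hence injective, since $\L_C$ is simple by \cite{Siebert}) endomorphism $\varphi$ of $\L_C$ is induced by an \'etale self-map of $C$. Then the two directions follow from the geometry of \'etale self-maps. The case $C \cong \Aa^1 \nonzero$ is already handled by Example~\ref{ex:Dixmier}: the maps $\varphi_k$ are injective and, for $k \geq 2$, not surjective. So the content is the forward direction.

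Let $\varphi \colon \L_C \to \L_C$ be nonzero, and set $\g \coloneqq \varphi(\L_C)$. Then $\g$ is an infinite-dimensional subalgebra isomorphic to $\L_C$. By Theorem~\ref{thm:main}, $\g$ has finite codimension in $\L_D$, where $D = D_\g$ comes with the dominant morphism $\pi' \colon C \to D$ of Corollary~\ref{cor:C maps to D}. Thus we obtain an isomorphism $\L_C \cong \g$ onto a finite-codimensional subalgebra of $\L_D$.

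The next step is to invoke Grabowski's theorem \cite{Grabowski}, which says that isomorphisms between finite-codimensional subalgebras of Lie algebras of vector fields are induced by isomorphisms of the underlying varieties. I would apply it to the abstract isomorphism $\L_C \to \g \subseteq \L_D$. Since $\L_C$ is also finite-codimensional in itself, this should yield an isomorphism $\sigma \colon C \to D$ of affine curves with $\varphi = \sigma_*$ composed with the inclusion $\g \hookrightarrow \L_D$. The precise form of Grabowski's result, possibly phrased via maximal finite-codimensional subalgebras $\{v : v(x)=0\}$ corresponding to points, is what I would check carefully. Composing with $\pi'$, we see that $\varphi$ is induced by the map $\psi \coloneqq \sigma^{-1} \circ \pi' \colon C \to C$. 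This map is dominant, and its pullback on functions realizes $\varphi$ as the restriction of vector fields along $\kk(C) \supseteq \psi^*\kk(C)$.

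Because $\psi^*$ carries $\L_C$ into $\L_C$, Lemma~\ref{lem:ramification} (and Lemma~\ref{lem:poles in D come from poles in C}) forces $\psi$ to be unramified on $C$. Indeed, a vector field on $D$ that is nonvanishing at a point $x$ would acquire a pole at a ramified preimage: its degree becomes $e\deg_x = e \geq 2$. Hence $\psi$ is \'etale. Moreover, $\varphi$ is surjective exactly when $\g = \L_C$, i.e.\ when $\pi'$ is an isomorphism, i.e.\ when $\deg \psi = 1$.

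It then remains to show that a finite-degree \'etale self-map of a smooth affine curve $C \not\cong \Aa^1\nonzero$ has degree $1$, using Riemann--Hurwitz on $\widetilde C$. Write $g$ for the genus and $r = |\widetilde C \setminus C|$, so that $\chi(C) = 2-2g-r$. For a finite \'etale map of degree $n$, multiplicativity $\chi(C) = n\chi(\psi^{-1}(C))$ and a count of boundary points gives $2-2g-r = n(2-2g-r')$ with $r' \le r$, where $\psi^{-1}(C) \subseteq C$ is open with $r'$ punctures. Since $C$ is affine and $\chi(C) \le 0$ unless $C=\Aa^1$, the case $\chi<0$ forces $n=1$. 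The $\Aa^1$ case also gives $n=1$, since $\Aa^1$ has no nontrivial \'etale covers from opens. The case $\chi = 0$ is exactly $\Aa^1\nonzero$. The main obstacle I expect is the Grabowski step: it requires the correct form of that result for a non-surjective embedding, together with handling the fact that $\psi$ need only be quasi-finite rather than finite, which needs the open-subset bookkeeping above.
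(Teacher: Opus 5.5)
Your overall architecture matches the paper's: Theorem~\ref{thm:main}, then $C\cong D$, then the ramification lemma to make $\pi'$ \'etale, then Hurwitz. However, two steps do not go through as written.

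\textbf{The isomorphism $C\cong D$.} Your construction of $\sigma\colon C\to D$ rests on a version of Grabowski's theorem for isomorphisms onto arbitrary finite-codimensional subalgebras. You haven't pinned this down, and it is not what is available. Your \'etaleness step also silently needs it. That step requires, for each $x\in D$, an element of $\g$ not vanishing at $x$. This fails for a proper finite-codimensional subalgebra such as $\{v\in\L_D : \deg_x(v)\le 0\}$.

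The missing idea is to use the simplicity you already mentioned. The subalgebra $\g\cong\L_C$ is simple. By Grabowski's Fundamental Lemma (Lemma~\ref{lem:fundamental}), any proper finite-codimensional subalgebra of $\L_D=\Der(\O_D)$ lies in some $\Der_{\mathfrak m}(\O_D)=\{v\in\L_D : \deg_x(v)\le 0\}$. By Lemma~\ref{lem:degrees form a semigroup}, the derived series of that subalgebra has trivial intersection, so it is not simple. Hence $\g=\L_D$; this is Corollary~\ref{cor:proper subalgebra not simple}. Siebert's result that $\L_C\cong\L_D$ implies $C\cong D$ then gives an abstract isomorphism. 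That is all you need to form $\psi=\sigma^{-1}\circ\pi'$; you never need $\varphi=\sigma_*$.

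\textbf{The Euler characteristic step.} Your bookkeeping does not force $n=1$. With only $r'\le r$, the equation $2-2g-r=n(2-2g-r')$ has solutions such as $g=0$, $r=4$, $r'=3$, $n=2$. The inclusion is also the wrong way round. $\psi^{-1}(C)$ is all of $C$, and the relevant set is $\widetilde\psi^{-1}(C)\supseteq C$. You know nothing about \'etaleness at the extra points of that set, which your multiplicativity formula presupposes.

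What you need is $r'=r$, and this also settles your worry about $\psi$ being only quasi-finite. Since $\psi(C)\subseteq C$, we have $\widetilde\psi^{-1}(\widetilde C\setminus C)\subseteq\widetilde C\setminus C$. Surjectivity of $\widetilde\psi$ onto the finite set $\widetilde C\setminus C$ then forces equality. So $\widetilde\psi^{-1}(C)=C$, $\psi$ is finite \'etale, and each point at infinity is totally ramified. Hurwitz then gives $(n-1)(2g-2+r)=0$. Hence $n=1$ unless $g=0$ and $r=2$, i.e.\ $C\cong\Aa^1\nonzero$. This is exactly the argument of Proposition~\ref{prop:Jacobian curves}.
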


The reason that $W$ does not satisfy the Dixmier property while the other Krichever--Novikov algebras do is essentially the following geometric result, which proves that the only affine curve with an \'etale self-map is $\Aa^1 \nonzero$.

\begin{proposition}\label{prop:Jacobian curves}
    Let $C$ be a smooth affine curve. Then there exists an \'etale map $f \colon C \to C$ which is not an isomorphism if and only if $C \cong \Aa^1 \nonzero$.
\end{proposition}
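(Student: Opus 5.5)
For the direction ``if'', take $C = \Aa^1 \nonzero = \operatorname{Spec}\kk[x,x^{-1}]$ and $f(x) = x^n$ with $n \geq 2$. Its derivative $nx^{n-1}$ is a unit in $\kk[x,x^{-1}]$, since $\operatorname{char}\kk = 0$. Hence $f$ is étale. It has degree $n$, so it is not an isomorphism.

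For the converse, let $f \colon C \to C$ be étale and write $B \coloneqq \widetilde{C} \setminus C$ and $k \coloneqq |B| \geq 1$ (nonempty because $C$ is affine). The plan has three steps.

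\emph{Step 1 (set-up).} A nonconstant étale map is dominant, so $f$ extends to a finite morphism $\widetilde{f} \colon \widetilde{C} \to \widetilde{C}$ of some degree $n$.

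\emph{Step 2 (rule out $n = 1$).} If $n = 1$, then $\widetilde{f}$ is a degree-one morphism of smooth projective curves, hence an isomorphism. Thus $f$ is an open immersion $C \hookrightarrow C$ whose image is $C \setminus S$ for some finite set $S$. Comparing the number of points at infinity of $C$ and $C \setminus S \cong C$ gives $k = k + |S|$, so $S = \emptyset$ and $f$ is an isomorphism. We may therefore assume $n \geq 2$.

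\emph{Step 3 (boundary combinatorics).} Since $f(C) \subseteq C$, we get $\widetilde{f}^{-1}(B) \subseteq B$. Conversely, $\widetilde{f}$ is surjective and no point of $B$ lies in $\widetilde{f}(C) \subseteq C$, so every point of $B$ is the image of a point of $B$. Hence $\widetilde{f}$ restricts to a surjection $B \to B$ of finite sets, i.e.\ a bijection. In particular, for each $b \in B$ the fibre $\widetilde{f}^{-1}(b)$ is a single point, of ramification index $n$.

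\emph{Step 4 (Riemann--Hurwitz).} Ramification of $\widetilde{f}$ can occur only off $C$, because $f$ is étale on $C$. So it lies only at the $k$ boundary points, each with $e = n$. Riemann--Hurwitz (tame, as $\operatorname{char}\kk = 0$) then gives
$$2g - 2 = n(2g - 2) + k(n - 1), \qquad\text{i.e.}\qquad (n - 1)(2g - 2 + k) = 0.$$
As $n \geq 2$, this forces $2g - 2 + k = 0$. With $k \geq 1$ and $g \geq 0$, the only solution is $g = 0$, $k = 2$. Hence $C \cong \PP^1$ minus two points, which is $\cong \Aa^1 \nonzero$.

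The step needing the most care is Step 3, the bijectivity of $\widetilde{f}$ on $B$ and hence total ramification there. A merely one-sided inequality from Riemann--Hurwitz would not exclude $\Aa^1$; for example, when $k = 1$ it only yields an inequality that is compatible with $g = 0$. It is the exactness of the ramification count, coming from the bijection $B \to B$, that pins down $k = 2$.
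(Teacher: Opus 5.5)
Your proof is correct and follows essentially the same route as the paper: extend $f$ to $\widetilde{C}$, show the boundary $B$ maps bijectively onto itself so $\widetilde{f}$ is totally ramified there and unramified elsewhere, then use Riemann--Hurwitz to force $g=0$ and $k=2$. Your Step 2 justifies $n \geq 2$, which the paper only asserts from $f$ not being an isomorphism. In Step 3, add that $\widetilde{f}(B)\subseteq B$ holds because $\widetilde{f}^{-1}(B)\subseteq B$ surjects onto $B$, which forces $\widetilde{f}^{-1}(B)=B$.
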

\begin{proof}
    If $C = \Aa^1 \nonzero$, then the map $f \colon C \to C$ given by $x \mapsto x^2$ is \'etale and not an isomorphism.

    For the converse, suppose $f \colon C \to C$ is an \'etale map which is not an isomorphism. Let $\widetilde{C}$ be the projective compactification of $C$, let $g$ be the genus of $\widetilde{C}$, and let $d \coloneqq \deg(f) \geq 2$ (since $f$ is not an isomorphism). Then $f$ extends uniquely to a morphism $f \colon \widetilde{C} \to \widetilde{C}$. Now $f$ must be a dominant morphism, and thus is surjective. Let $X \coloneqq f^{-1}(\widetilde{C} \setminus C)$. Since $f(C) \subseteq C$, it follows that $X \subseteq \widetilde{C} \setminus C$. Now, $f$ restricts to a surjective map $\restr{f}{X} \colon X \to \widetilde{C} \setminus C$. Since the set $\widetilde{C} \setminus C$ is finite, it follows that $X = \widetilde{C} \setminus C$ and that $\restr{f}{X}$ is injective. In other words, $f$ is injective on $\widetilde{C} \setminus C$, and thus $f$ is totally ramified on $\widetilde{C} \setminus C$.
    
    By Hurwitz's formula \cite[Theorem II.5.9]{Silverman}, we have
    $$\sum_{p \in \widetilde{C}} (e_p - 1) = (1 - d)(2g - 2),$$
    where $e_p$ is the ramification index of $f$ at $p$. In particular, this means that
    $$(1 - d)(2g - 2) \geq 0.$$
    Since $d \geq 2$, this forces $g = 0$ or $g = 1$. Furthermore, $f$ is \'etale on $C$ (meaning $e_p = 1$ for all $p \in C$) and is totally ramified on $\widetilde{C} \setminus C$ (meaning $e_p = d$ for all $p \in \widetilde{C} \setminus C$). Letting $r \coloneqq |\widetilde{C} \setminus C|$, it follows that
    $$r(d - 1) = (1 - d)(2g - 2).$$
    Since $d \geq 2$, we conclude that $r = 2 - 2g$. If $g = 1$, this means that $r = 0$, which is impossible since $C$ is an affine curve, while if $g = 0$, it follows that $r = 2$. We conclude that the only possibility is that $C = \PP^1 \setminus \{2 \text{ points}\} \cong \Aa^1 \nonzero$.
\end{proof}

To prove Theorem~\ref{thm:Dixmier}, we will use some techniques developed in \cite{Grabowski} and \cite{Siebert} to show that a proper finite-codimensional subalgebra of $\L_C$ is not simple. Since $\L_C$ is a simple Lie algebra \cite{Siebert}, this will imply that $\L_C$ is not isomorphic to any of its subalgebras of finite codimension. We will then need Theorem~\ref{thm:main} to deal with the infinite codimension case. To use the techniques from \cite{Grabowski}, we require some notation.

\begin{notation}
    Given a $\kk$-algebra $A$ and an ideal $I \ideal A$, we write
    $$\Der_I(A) \coloneqq \{d \in \Der(A) \mid d(a) \in I \text{ for all } a \in A\}.$$
\end{notation}

It is clear that $\Der_I(A)$ is a Lie subalgebra of $\Der(A)$ which is also an $A$-submodule. We now state Grabowski's fundamental lemma from \cite{Grabowski}, which will be essential for the proof of Theorem~\ref{thm:Dixmier}.

\begin{lemma}[{\cite[Fundamental Lemma]{Grabowski}}]\label{lem:fundamental}
    Let $A$ be a $\kk$-algebra, and let $\g$ be a proper subalgebra of $\Der(A)$ of finite codimension. Then there exists a proper ideal $I_0$ of $A$ such that $\g \subseteq \Der_P(A)$ for all prime ideals $P \ideal A$ with $I_0 \subseteq P$.
\end{lemma}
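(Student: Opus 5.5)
The plan is to build the ideal $I_0$ directly from $\g$, show that it is $\g$-stable, and then use a finite-dimensionality argument to push $\g$ into $\Der_P(A)$. Throughout, $A$ is taken to be commutative, as in all the applications in this paper. I would set
$$I_0 \coloneqq \{a \in A \mid a\Der(A) \subseteq \g\}.$$

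First, $I_0$ is an ideal. If $a \in I_0$ and $b \in A$, then $ab\Der(A) \subseteq a\Der(A) \subseteq \g$, since $b\Der(A) \subseteq \Der(A)$.

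Second, $I_0$ is proper. If $1 \in I_0$, then $\Der(A) \subseteq \g$, contradicting properness of $\g$.

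Third, $I_0$ is $\g$-stable. Take $a \in I_0$, $d \in \g$ and $D \in \Der(A)$. Then
$$[d, aD] = d(a)D + a[d,D].$$
Here $[d,aD] \in \g$ because $aD \in \g$ and $\g$ is a subalgebra, and $a[d,D] \in \g$ because $a \in I_0$. Hence $d(a)D \in \g$ for every $D$, that is, $d(I_0) \subseteq I_0$.

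The main step is to show that $A/I_0$ is finite-dimensional. For each fixed $D \in \Der(A)$, the map $A \to \Der(A)/\g$, $a \mapsto aD + \g$, has kernel $K_D$ of codimension at most $\dim(\Der(A)/\g) < \infty$, and $I_0 = \bigcap_D K_D$. To control this intersection I would proceed as follows:
\begin{enumerate}
\item Use that $\g$ acts on the finite-dimensional module $V \coloneqq \Der(A)/\g$.
\item Note that for $a \in K_D$ and $d \in \g$ the same bracket identity gives $d(a)D \equiv -a[d,D] \pmod{\g}$.
\item Take a finite set $D_1,\dots,D_n$ whose classes span $V$, together with $\g$, and try to show that $K_{D_1} \cap \dots \cap K_{D_n}$ already lies in $I_0$ after shrinking to its largest $\g$-stable ideal.
\item Show that this largest $\g$-stable ideal still has finite codimension, arguing by dimension counting inside $\End(V)$, where $\g$ acts with finite-dimensional image.
\end{enumerate}
This is the step I expect to be the real obstacle. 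If the direct count fails, I would replace $I_0$ by the largest $\g$-stable ideal contained in a single $K_D$ and argue finiteness there.

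Once $A/I_0$ is finite-dimensional, the conclusion is formal. Let $P$ be a prime with $I_0 \subseteq P$. Then $A/P$ is a finite-dimensional domain over the algebraically closed field $\kk$, hence $A/P = \kk$. Each $d \in \g$ induces a derivation of the finite-dimensional commutative algebra $A/I_0$. In characteristic zero, such derivations preserve every maximal ideal: they kill idempotents, so they respect the decomposition into local Artinian factors, and they preserve the nilradical. Hence $d(P) \subseteq P$. Finally, for any $a \in A$ write $a = \lambda + (a - \lambda)$ with $\lambda \in \kk$ and $a - \lambda \in P$. Then $d(a) = d(a - \lambda) \in P$, so $d \in \Der_P(A)$, as required.
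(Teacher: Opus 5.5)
Your first three steps are correct. The set $I_0=\{a\in A \mid a\Der(A)\subseteq\g\}$ is a proper ideal, and it is $\g$-stable by the identity $[d,aD]=d(a)D+a[d,D]$. If $A/I_0$ were finite-dimensional, your closing argument would indeed give $\g\subseteq\Der_P(A)$.

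\textbf{The gap.} It is the step you flag yourself. Items 1--4 are not an argument, and the claim that $A/I_0$ is finite-dimensional is false for general commutative $A$, so no dimension count in $\End(V)$ can establish it. Here is a counterexample.
\begin{itemize}
\item Let $A=\kk[x,y,z]/(F)$ with $F=x^4+z^4+yx^2z^2$, which is a domain, and let $P=(x,z)$, so $A/P=\kk[y]$.
\item Grade by $\deg x=\deg z=1$, $\deg y=0$, and solve $D(F)\in(F)$ in weights $-1$ and $0$. There are no nonzero derivations of negative weight, and the weight-zero ones are $\kk[y](x\partial_x+z\partial_z)$. Hence every derivation maps $A$ into $P$.
\item Each $D\in\Der(A)$ therefore induces a $\kk[y]$-linear endomorphism of $P/P^2\cong\kk[y]^2$. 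The map $\tau(D)=\operatorname{tr}(D|_{P/P^2})$ is an $A$-linear Lie homomorphism onto the abelian Lie algebra $\kk[y]$. It is surjective since $\tau(b(x\partial_x+z\partial_z))=2\bar b$.
\item Choose $\ell\colon\kk[y]\to\kk$ whose kernel contains no nonzero ideal, e.g.\ $\ell(y^k)=1/k!$. Then $\g=\ker(\ell\circ\tau)$ is a proper subalgebra of codimension one.
\item However, $\{a \mid a\Der(A)\subseteq\g\}=\{a \mid \ell(\bar a\,\kk[y])=0\}=P$, which has infinite codimension.
\end{itemize}
Your fallback fails as well: the largest ideal inside any $K_D$ with $\tau(D)\neq0$ is again $P$. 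With $I_0=P$ the lemma's conclusion still holds here, but only trivially, because $\Der_Q(A)=\Der(A)$ for every prime $Q\supseteq P$.

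\textbf{Consequence.} Reducing to a finite-dimensional quotient cannot prove the lemma as stated. The statement is phrased in terms of primes containing $I_0$. A proof must show directly that $\g(A)\subseteq P$ for every prime $P\supseteq I_0$, including primes with $A/P$ infinite-dimensional. For those primes your step $A/P=\kk$ is unavailable.

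In the paper's only application, $A=\O_C$ is a Dedekind domain, so finite codimension of $I_0$ would follow from $I_0\neq0$. Even that requires a genuine use of the bracket, since a finite-codimensional subspace of $A$ (such as $\ker\ell$ above) need not contain any nonzero ideal. You have not supplied this argument.

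The paper itself gives no proof of this lemma; it imports it from Grabowski.
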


It now follows easily from Grabowski's fundamental lemma that a proper subalgebra of $\L_C$ of finite codimension is not simple.

\begin{corollary}\label{cor:proper subalgebra not simple}
    Let $C$ be a smooth affine curve and let $\g$ be a proper Lie subalgebra of $\L_C$ of finite codimension. Then $\g$ is not simple.
\end{corollary}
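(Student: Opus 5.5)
Write $A \coloneqq \O_C$, so that $\L_C = \Der(A)$. Since $\g$ is a proper subalgebra of finite codimension, Lemma~\ref{lem:fundamental} gives a proper ideal $I_0 \ideal A$ such that $\g \subseteq \Der_P(A)$ for every prime $P \supseteq I_0$. Because $A$ is a finitely generated domain and $I_0$ is proper, $I_0$ lies in some maximal ideal $\mathfrak{m}$, corresponding to a point $x \in C$. We therefore get $\g \subseteq \Der_{\mathfrak{m}}(A)$. This is the space of vector fields vanishing at $x$, namely $\{v \in \L_C \mid \ord_x(v) \geq 1\}$.

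Using this inclusion, we produce a proper nonzero ideal of $\g$. The natural candidate is
$$\mathfrak{i} \coloneqq \g \cap \Der_{\mathfrak{m}^k}(A) \quad \text{for suitable } k \geq 2,$$
that is, the elements of $\g$ vanishing to order at least $k$ at $x$.

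\textbf{Ideal property.} This follows from Lemma~\ref{lem:degrees form a semigroup} or a direct local computation. If $u = f\del_t$ with $\ord_x(f) \geq 1$ and $v = g\del_t$ with $\ord_x(g) \geq k$, then $f\del_t(g) - \del_t(f) g$ has order at least $k$. Hence $[\g, \mathfrak{i}] \subseteq \mathfrak{i}$. Alternatively, $\Der_{\mathfrak{m}^k}(A)$ is normalized by $\Der_{\mathfrak{m}}(A)$, since $d(\mathfrak{m}^k) \subseteq \mathfrak{m}^{k}$ when $d(A) \subseteq \mathfrak{m}$.

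\textbf{Nonzero.} $\Der_{\mathfrak{m}^k}(A) \supseteq \mathfrak{m}^k \L_C$ has finite codimension in $\L_C$, because $\L_C/\mathfrak{m}^k\L_C$ is finite-dimensional, $\L_C$ being a finitely generated $A$-module. Since $\g$ also has finite codimension, $\mathfrak{i}$ has finite codimension in $\L_C$, hence is infinite-dimensional and in particular nonzero.

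\textbf{Proper.} This is the step needing care. We need some element of $\g$ vanishing to order exactly $1$ at $x$, or at least of order less than $k$. I would take $k$ large. Since $\g$ has finite codimension in $\L_C$, its image in the finite-dimensional space $\L_C/\mathfrak{m}^k\L_C$ has codimension bounded independently of $k$. The space $\L_C/\mathfrak{m}^k\L_C$ has dimension $k$, one dimension for each order $0,\dots,k-1$ at $x$. So for $k$ larger than $\operatorname{codim}\g + 1$, the image of $\g$ is nonzero. Any element with nonzero image has order less than $k$, so it does not lie in $\mathfrak{i}$. Thus $\mathfrak{i}$ is a proper nonzero ideal, and $\g$ is not simple.

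The main obstacle is the application of Grabowski's lemma, in particular checking that his hypotheses hold for $A = \O_C$. The remaining steps are routine local computations with orders at $x$.
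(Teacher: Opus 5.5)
Your argument is correct. Its first half matches the paper exactly: you apply Grabowski's fundamental lemma and then pass to a maximal ideal $\mathfrak m\supseteq I_0$ to get $\g\subseteq\Der_{\mathfrak m}(\O_C)$. The ``main obstacle'' you flag is not one, since Lemma~\ref{lem:fundamental} as quoted applies to an arbitrary $\kk$-algebra.

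You diverge from the paper in how non-simplicity is extracted from $\g\subseteq\Der_{\mathfrak m}(\O_C)$.

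\textbf{The paper's route.} It uses Lemma~\ref{lem:degrees form a semigroup} to show that each step of the derived series of $\Der_{\mathfrak m}(\O_C)$ strictly raises the vanishing order at $x$. Hence $\bigcap_n D^n(\g)=0$, which forces $\g\neq[\g,\g]$, so $\g$ is not simple. This needs only $0\neq\g\subseteq\Der_{\mathfrak m}(\O_C)$. It uses no second appeal to finite codimension, no description of $\Der_{\mathfrak m^k}(\O_C)$ and no dimension count. It also proves more: every subalgebra of $\Der_{\mathfrak m}(\O_C)$ is residually solvable.

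\textbf{Your route.} You exhibit a concrete ideal $\g\cap\Der_{\mathfrak m^k}(\O_C)$. This costs you two small local facts: that $\Der_{\mathfrak m^k}(\O_C)$ is exactly the locus $\ord_x\geq k$, and that it has finite codimension in $\L_C$. In exchange you get an explicit ideal of finite codimension in $\g$.

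\textbf{Two simplifications.} First, nonzero-ness needs only that $\g$ is infinite-dimensional, because your ideal has codimension at most $k$ in $\g$. Second, properness needs no count of $\dim \L_C/\mathfrak m^k\L_C$: fix any nonzero $v\in\g$ and take $k>\ord_x(v)$.
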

\begin{proof}
    By definition, we have $\L_C = \Der(\O_C)$, so we can apply Grabowski's fundamental lemma (Lemma~\ref{lem:fundamental}) to deduce that there exists a maximal ideal $\mathfrak{m} \ideal \O_C$ such that $\g \subseteq \Der_{\mathfrak{m}}(\O_C)$. Letting $p \in C$ be the point corresponding to the maximal ideal $\mathfrak{m}$, write $\L_C(-p) \coloneqq \Der_{\mathfrak{m}}(\O_C)$. It is easy to see that
    $$\L_C(-p) = \{v \in \L_C \mid \ord_p(v) \geq 1\} = \{v \in \L_C \mid \deg_p(v) \leq 0\}.$$
    For a Lie algebra $L$, define $D(L) \coloneqq [L,L]$. By Lemma~\ref{lem:degrees form a semigroup}, it follows that $\deg_p([u,v]) \leq -1$ for all $u,v \in \L_C(-p)$, so we see that
    $$D(\L_C(-p)) \subseteq \L_C(-2p) \coloneqq \{v \in \L_C \mid \ord_p(v) \geq 2\}.$$
    Continuing this way by applying Lemma~\ref{lem:degrees form a semigroup}, we conclude that $\bigcap_{n \in \NN} D^n(\L_C(-p)) = \{0\}$. Since $D^n(\g) \subseteq D^n(\L_C(-p))$ for all $n \in \NN$, we also get $\bigcap_{n \in \NN} D^n(\g) = \{0\}$, proving that $\g$ is not simple (e.g. $D(\g) = [\g,\g]$ is a proper nontrivial ideal of $\g$).
\end{proof}

We are now ready to prove Theorem~\ref{thm:Dixmier}.

\begin{proof}[Proof of Theorem~\ref{thm:Dixmier}]
    If $C = \Aa^1 \nonzero$, then $\L_C$ does not satisfy the Dixmier property, by Example \ref{ex:Dixmier}.
    
    For the converse, suppose $C \not\cong \Aa^1 \nonzero$. Let $\varphi \colon \L_C \to \L_C$ be an injective homomorphism of Lie algebras, and let $\g \coloneqq \im(\varphi)$. Note that $\g \cong \L_C$ is a simple Lie algebra \cite[Proposition 1]{Siebert}. Therefore, if $\g$ has finite codimension in $\L_C$, we must have $\g = \L_C$ by Corollary~\ref{cor:proper subalgebra not simple}, and thus $\varphi$ is an automorphism of $\L_C$ and we are done.

    So, assume $\g$ has infinite codimension in $\L_C$. Since $\g$ is an infinite-dimensional subalgebra of $\L_C$, we let $D \coloneqq D_\g$ be the affine curve associated to $\g$. By Theorem~\ref{thm:main}, $\g$ has finite codimension in $\L_D$. But $\g$ is simple, so $\g = \L_D$ by Corollary~\ref{cor:proper subalgebra not simple}.

    This shows that $\L_D = \g \cong \L_C$. By \cite[Corollary 3]{Siebert}, we have $C \cong D$. Let $\widetilde{C}$ and $\widetilde{D}$ be the smooth projective compactifications of $C$ and $D$, and let $\pi \colon \widetilde{C} \to \widetilde{D}$ be the dominant morphism induced by the inclusion $\kk(\widetilde{D}) = F(\g) \subseteq \kk(\widetilde{C})$. Since $\g$ has infinite codimension in $\L_C$, the map $\pi$ cannot be an isomorphism, as this would imply that $F(\g) = \kk(\widetilde{D}) = \kk(\widetilde{C})$, which is impossible by Theorem~\ref{thm:main}. Therefore, we have $d \coloneqq \deg(\pi) \geq 2$.

    By Corollary~\ref{cor:C maps to D}, $\pi$ restricts to a map of affine curves $\pi' \colon C \to D$. Let $p \in C$ and let $q \coloneqq \pi(p) \in D$. Let $e$ be the ramification index of $\pi$ at $p$. By Lemma~\ref{lem:ramification}, it follows that $\deg_p(w) = e \deg_q(w)$ for all $w \in \L_D$. Now let $w \in \L_D$ be a vector field which does not vanish at $q$, so that $\deg_q(w) = 1$. Then the above implies that $\deg_p(w) = e \deg_q(w) = e$. But $w \in \L_C$ since $\L_D = \g$ is contained in $\L_C$, so $w$ cannot have a pole at $p$, meaning $\deg_p(w) \leq 1$. This forces $e = 1$, so $\pi$ is unramified at $p$. It follows that $\pi'$ is \'etale.

    Now, we have an \'etale map $C \xrightarrow{\pi'} D \cong C$. By Proposition~\ref{prop:Jacobian curves}, we conclude that $\pi'$ is an isomorphism, a contradiction. This finishes the proof.
\end{proof}

\section{Classification of subalgebras of the full Witt algebra}\label{sec:classification Witt}

Recall that the Witt algebra is $W = \L_{\Aa^1 \nonzero} = \Der(\kk[t,t^{-1}]) = \kk[t,t^{-1}]\del$, where $\del = \diff{t}$. In other words, we now set $\widetilde{C} = \PP^1$ and $C = \PP^1 \setminus \{0,\infty\} = \Aa^1 \setminus \{0\}$. In this section, we give a more explicit description of subalgebras of the Witt algebra, analogously to \cite[Theorem 2.8]{BellBuzaglo}, which classifies subalgebras of the one-sided Witt algebra $\WW_1 = \L_{\Aa^1}$.

For the next definition, we note the following: if we have $s \in \kk[t,t^{-1}]$, then $\Der(\kk[s]) = \kk[s]\del_s$, where $\del_s$ is the unique derivation of $\kk[s]$ such that $\del_s(s) = 1$. We can uniquely extend $\del_s$ to a derivation of $\kk(t)$ by identifying $\del_s = \frac{1}{s'}\del$, where $s' = \frac{ds}{dt}$. This lets us embed $\Der(\kk[s])$ into $\Der(\kk(t)) = \kk(t)\del$ as the subalgebra $\frac{1}{s'}\kk[s]\del$.

\begin{definition}\label{def:L(s)}
    Given $s \in \kk[t,t^{-1}]$, define $L(s) \coloneqq W \cap \Der(\kk[s])$, where the intersection takes place in $\kk(t)\del$ upon identifying $\Der(\kk[s]) = \frac{1}{s'}\kk[s]\del$. Given a positive integer $n$, we further define the \emph{$n$-Veronese subalgebra of $W$} to be $\Ver_n \coloneqq \kk[t^n,t^{-n}]t\del$.
\end{definition}

\begin{remark}\label{rem:Veronese}
    The $1$-Veronese subalgebra simply recovers the full Witt algebra (in other words, we have $\Ver_1 = W$). Furthermore, it is easy to see that $\Ver_n = \Der(\kk[t^n,t^{-n}]) \cong W$.
\end{remark}

The following result completely classifies subalgebras of $W$ up to finite codimension.

\begin{theorem}\label{thm:classification up to finite codimension}
    Let $\g$ be an infinite-dimensional subalgebra of $W$. Then either $\g$ has finite codimension in $\Ver_n$ for some positive integer $n$, or there exists $s \in \kk[t,t^{-1}]$ such that $\g$ has finite codimension in $L(s)$. In particular, $\g$ is isomorphic to a subalgebra of finite codimension in $W$ (in the first case) or in $\WW_1$ (in the second case).
\end{theorem}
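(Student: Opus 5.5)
Throughout, let $\g$ be an infinite-dimensional subalgebra of $W$, and let $\widetilde{D}$, $D$ be its associated curves. By Theorem~\ref{thm:main}, $\g$ has finite codimension in $\L_D$. The plan is to identify $D$ and $\L_D$ explicitly. Since $F(\g) \subseteq \kk(t)$, Lüroth's theorem gives $F(\g) = \kk(s_0)$ for some $s_0 \in \kk(t)$, so $\widetilde{D} \cong \PP^1$. Let $\pi \colon \PP^1_t \to \PP^1_{s_0}$ be the induced map. By Corollary~\ref{cor:C maps to D}, $\pi$ maps $C = \PP^1 \setminus \{0,\infty\}$ into $D$. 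Hence $\widetilde{D} \setminus D \subseteq \pi(\{0,\infty\})$, so $D$ is $\PP^1$ minus one or two points.

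\textbf{One point missing.} Compose $s_0$ with a Möbius transformation so that the missing point is $\infty$. Then $D = \Aa^1_s$ with $\kk[s] = \O_D$. Every point of $C$ maps into $\Aa^1_s$, so $s$ has no poles on $C$. Hence $s \in \kk[t,t^{-1}]$, and $\L_D = \Der(\kk[s]) = \frac{1}{s'}\kk[s]\del$. Since $\g \subseteq W \cap \L_D = L(s) \subseteq \L_D$ and $\g$ has finite codimension in $\L_D$, it also has finite codimension in $L(s)$. Moreover $L(s)$ has finite codimension in $\L_D \cong \WW_1$, which gives the isomorphism statement.

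\textbf{Two points missing.} Normalize so that $D = \PP^1_s \setminus \{0,\infty\}$. Then $\pi^{-1}(\{0,\infty\}) \subseteq \{0,\infty\}$ by Lemma~\ref{lem:poles in D come from poles in C}. Since $\pi$ is surjective, $\pi^{-1}(\{0,\infty\}) = \{0,\infty\}$. A rational function $s(t)$ whose zeros and poles lie only in $\{0,\infty\}$ is a monomial $\lambda t^{\pm n}$. After rescaling and possibly inverting, $F(\g) = \kk(t^n)$ and $\O_D = \kk[t^n,t^{-n}]$. Then $\L_D = \Der(\kk[t^n,t^{-n}]) = \Ver_n$ by Remark~\ref{rem:Veronese}, and $\g$ has finite codimension in $\Ver_n \cong W$.

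The step needing most care is the two-point case. One must justify that the preimage of the two missing points is exactly $\{0,\infty\}$ rather than merely contained in it. One must also check that the normalizations (Möbius changes of $s$) do not change $\L_D$ as a subalgebra of $\kk(t)\del$. The latter holds because $\Der(\O_D)$ depends only on the ring $\O_D$, not on the choice of generator.
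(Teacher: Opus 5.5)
Your proof is correct and follows the paper's argument essentially step for step. L\"uroth gives $\widetilde{D}\cong\PP^1$, and pulling poles of $\g$ back into $\{0,\infty\}$ bounds $|\widetilde{D}\setminus D|\le 2$. Then either $\O_D=\kk[s]$ with $s\in\kk[t,t^{-1}]$, giving $\g\subseteq L(s)$, or $\O_D=\kk[s,s^{-1}]$ with $s$ a monomial in $t$ (which the paper phrases as $s$ being a unit of $\kk[t,t^{-1}]$), giving $\L_D=\Ver_n$. As in the paper, the lower bound $|\widetilde{D}\setminus D|\ge 1$ is left implicit; it holds because $\g$ is infinite-dimensional while $H^0(\PP^1,\T_{\PP^1})$ is $3$-dimensional.
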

\begin{proof}
    Let $\widetilde{D} \coloneqq \widetilde{D}_\g$ and $D \coloneqq D_\g$. By Theorem~\ref{thm:main}, we know that $\g$ has finite codimension in $\L_D$. Note that $F(\g) = \kk(\widetilde{D})$ is a subfield of $\kk(t)$, so L\"uroth's theorem implies that $\widetilde{D} \cong \PP^1$. Let $\pi \colon \widetilde{C} \to \widetilde{D}$ be the morphism induced by the inclusion $\kk(\widetilde{D}) \subseteq \kk(\widetilde{C}) = \kk(t)$. By Lemma~\ref{lem:poles in D come from poles in C}, we know that $\pi^{-1}(\widetilde{D} \setminus D) \subseteq \widetilde{C} \setminus C = \{0,\infty\}$, from which it follows that $|\widetilde{D} \setminus D| = 1$ or $2$.
    
    We first consider the case $|\widetilde{D} \setminus D| = 1$, meaning $D \cong \Aa^1$. By Corollary~\ref{cor:C maps to D}, $\O_D$ is a subring of $\O_C = \kk[t,t^{-1}]$, and thus $\O_D = \kk[s]$ for some $s \in \kk[t,t^{-1}]$. Note that
    $$\g \subseteq W \cap \L_D = W \cap \Der(\O_D) = W \cap \Der(\kk[s]) = L(s).$$
    Since $\g$ has finite codimension in $\L_D = \Der(\kk[s])$, it follows that $\g$ has finite codimension in $L(s)$, as required.

    It remains to consider the case $|\widetilde{D} \setminus D| = 2$, meaning $D \cong \Aa^1 \setminus \{0\}$. Applying Corollary~\ref{cor:C maps to D} as above, we know that $\O_D$ is a subring of $\O_C = \kk[t,t^{-1}]$, from which it follows that $\O_D = \kk[s,s^{-1}]$ for some $s \in \kk[t,t^{-1}]$. Since $s$ is invertible in $\kk[t,t^{-1}]$, we deduce that (rescaling $s$ if necessary) $s = t^n$ for some positive integer $n$. Hence,
    $$\L_D = \Der(\O_D) = \Der(\kk[t^n,t^{-n}]) = \Ver_n,$$
    by Remark \ref{rem:Veronese}. Since $\g$ has finite codimension in $\L_D = \Ver_n$, this concludes the proof.

    The final sentence follows upon noting that $\Ver_n \cong W$ and that $L(s)$ has finite codimension in $\Der(\kk[s]) \cong \WW_1$.
\end{proof}

Since Definition~\ref{def:L(s)} does not give an explicit description of $L(s)$, we will explicitly describe $L(s)$, analogously to \cite[Notation 4.11]{Buzaglo}. First, we need the following analog of \cite[Proposition 4.13]{Buzaglo}.

\begin{lemma}\label{lem:g_s}
    Let $s \in \kk[t,t^{-1}]$ be non-constant. Then there exists $g_s \in \kk[t,t^{-1}] \nonzero$ such that $s'g_s \in \kk[s]$ and if $s'g \in \kk[s]$ for some $g \in \kk[t,t^{-1}]$, then $g \in \kk[s]g_s$.
\end{lemma}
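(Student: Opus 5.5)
The plan is to show that the set
$$M \coloneqq \{g \in \kk[t,t^{-1}] \mid s'g \in \kk[s]\}$$
is a nonzero $\kk[s]$-submodule of $\kk[t,t^{-1}]$ which is free of rank one. A generator $g_s$ of $M$ then satisfies both conditions of the statement.

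First I check the module structure. $M$ is clearly closed under addition. It is closed under multiplication by $\kk[s]$, since $s'(hg) = h(s'g) \in \kk[s]$ for $h \in \kk[s]$. Next I show $M \neq 0$. Since $s$ is non-constant, $\kk(t)/\kk(s)$ is a finite extension, of degree $m$ say. The element $s' \in \kk[t,t^{-1}]$ is integral over $\kk[s]$: indeed $\kk[t,t^{-1}]$ is integral over $\kk[s]$ when $s$ has both a positive and a negative power of $t$. In general I cannot rely on integrality, so I use a norm argument instead. Let $N \in \kk(s)$ be the norm of $s'$, i.e. the product of the conjugates of $s'$. Then $N/s'$ lies in the Galois closure. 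Rather than working there, I use a simpler route: the minimal polynomial of $s'$ over $\kk(s)$ has constant term $c \neq 0$, and $c/s'$ is a polynomial in $s'$ with coefficients in $\kk(s)$. Clearing denominators, I obtain a polynomial expression $g_0$ in $t^{\pm1}$ with $s'g_0 \in \kk[s]\setminus\{0\}$. Concretely, $g_0 = h(s)\cdot(\text{polynomial in } s' \text{ over } \kk(s))$, with $h$ chosen to clear denominators; the result lies in $\kk[t,t^{-1}]$ because $s, s' \in \kk[t,t^{-1}]$. So $M \neq 0$.

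Next I show that $M$ is finitely generated and torsion-free of rank one over the PID $\kk[s]$, hence free of rank one. Torsion-freeness is immediate since everything lives in the field $\kk(t)$. For rank one: if $g_1, g_2 \in M$ are nonzero, then $s'g_1 = a_1(s)$ and $s'g_2 = a_2(s)$ are nonzero, so $g_1/g_2 = a_1/a_2 \in \kk(s)$. Hence $M \otimes \kk(s)$ is one-dimensional. The real content, and the step I expect to be the main obstacle, is finite generation; equivalently, that the elements of $M$ have bounded denominators, i.e. there is a nonzero $b \in \kk[s]$ with $bM \subseteq \kk[s]g_0$. Fixing $g_0$, every $g \in M$ has the form $g = (a(s)/a_0(s))\,g_0$ with $a_0 \coloneqq s'g_0$. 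So $M \cong \{a \in \kk[s] \mid a/s' \in \kk[t,t^{-1}]\}$ via $g \mapsto s'g$. This is an ideal of $\kk[s]$, namely the intersection $s'\kk[t,t^{-1}] \cap \kk[s]$. Ideals of the PID $\kk[s]$ are principal, and this ideal is nonzero since $a_0$ lies in it.

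So I take a generator $a_s$ of the ideal $s'\kk[t,t^{-1}] \cap \kk[s]$ of $\kk[s]$ and set $g_s \coloneqq a_s/s' \in \kk[t,t^{-1}]\setminus\{0\}$. Then $s'g_s = a_s \in \kk[s]$. If $s'g \in \kk[s]$ with $g \in \kk[t,t^{-1}]$, then $s'g$ lies in the ideal, so $s'g = h(s)a_s$ for some $h \in \kk[s]$, and therefore $g = h(s)g_s \in \kk[s]g_s$. Seen this way, the whole argument reduces to showing that the ideal is nonzero. The most robust way to do this is the norm construction: take $a_0 \coloneqq N_{\kk(t)/\kk(s)}(s') \in \kk(s)$ and multiply by a denominator. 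I expect the delicate point to be checking that $a_0/s'$, which is a product of conjugates, actually lies in $\kk[t,t^{-1}]$ and not merely in $\kk(t)$. To handle this I would fall back on the minimal-polynomial construction and clear denominators by a suitable element of $\kk[s]$.
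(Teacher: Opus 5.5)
Your proof is correct, and its core is the natural argument. The map $g \mapsto s'g$ is an injective $\kk[s]$-linear map identifying $M$ with the ideal $J = s'\kk[t,t^{-1}] \cap \kk[s]$ of the PID $\kk[s]$. So everything reduces to showing $J \neq 0$, and your minimal-polynomial construction settles this. Note that $s' \neq 0$, since $s$ is non-constant and the characteristic is zero, and that $s'$ is algebraic over $\kk(s)$ because $\kk(t)/\kk(s)$ is finite. Let $X^n + c_{n-1}X^{n-1} + \dots + c_0$ be the minimal polynomial of $s'$ over $\kk(s)$, and let $0 \neq h \in \kk[s]$ clear the denominators of all the $c_i$, including $c_0$. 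Then $g_0 = -h(s'^{n-1} + c_{n-1}s'^{n-2} + \dots + c_1)$ lies in $\kk[t,t^{-1}]$, and $s'g_0 = hc_0 \in \kk[s] \setminus \{0\}$.

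The paper itself only says the proof is similar to the polynomial case in \cite{Buzaglo}. In that setting one may lean on $\kk[t]$ being finite and integral over $\kk[f]$. For Laurent polynomials this can fail: $t^{-1}$ is not integral over $\kk[t^2]$, and $\kk[t,t^{-1}]$ is not a finitely generated $\kk[t]$-module. Your route, via the contracted ideal and clearing denominators over $\kk(s)$, is exactly the adaptation that works uniformly for all non-constant $s$.

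You can delete the detours about norms and about finite generation being ``the main obstacle''. Finite generation is automatic once $M \cong J$. The ``delicate point'' in your last paragraph is already resolved by the construction above.
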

\begin{proof}
    Similar to \cite[Proposition 4.13]{Buzaglo}.
\end{proof}

We now give an explicit description of $L(s)$.

\begin{proposition}\label{prop:L(s)}
    Let $s \in \kk[t,t^{-1}]$ and let $g_s \in \kk[t,t^{-1}]$ be as in Lemma~\ref{lem:g_s}. Then $L(s) = \kk[s]g_s \del$.
\end{proposition}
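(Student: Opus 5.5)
We prove the two inclusions $\kk[s]g_s\del \subseteq L(s)$ and $L(s) \subseteq \kk[s]g_s\del$ directly from the identification $\Der(\kk[s]) = \frac{1}{s'}\kk[s]\del$ inside $\kk(t)\del$. If $s$ is constant, $\Der(\kk[s]) = 0$ under this convention and $L(s)=0$, so we assume $s$ is non-constant. Then $s' \neq 0$ and Lemma~\ref{lem:g_s} applies.

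For the inclusion $\kk[s]g_s\del \subseteq L(s)$, let $h \in \kk[s]$. Since $g_s \in \kk[t,t^{-1}]$ and $\kk[s] \subseteq \kk[t,t^{-1}]$, we have $hg_s\del \in \kk[t,t^{-1}]\del = W$. Moreover,
$$hg_s\del = \frac{hs'g_s}{s'}\del = (hs'g_s)\del_s.$$
By Lemma~\ref{lem:g_s}, $s'g_s \in \kk[s]$, so $hs'g_s \in \kk[s]$. Hence $hg_s\del \in \kk[s]\del_s = \Der(\kk[s])$, and so $hg_s\del \in L(s)$.

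For the reverse inclusion, let $v \in L(s)$. Since $v \in W$, write $v = g\del$ with $g \in \kk[t,t^{-1}]$. Since $v \in \Der(\kk[s]) = \frac{1}{s'}\kk[s]\del$, write $v = \frac{f}{s'}\del$ with $f \in \kk[s]$. Comparing coefficients in the one-dimensional $\kk(t)$-space $\kk(t)\del$ (Lemma~\ref{lem:Q_C is one-dimensional}) gives $g = f/s'$, so $s'g = f \in \kk[s]$. The universal property in Lemma~\ref{lem:g_s} then gives $g \in \kk[s]g_s$, so $v \in \kk[s]g_s\del$.

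The only substantive input is Lemma~\ref{lem:g_s}, which may be assumed; everything else is a bookkeeping check of the identification $\del_s = \frac{1}{s'}\del$. The one point needing care is that the identification of $\Der(\kk[s])$ inside $\kk(t)\del$ is the stated one: a derivation of $\kk[s]$ extends uniquely to $\kk(s)$ and then to the finite separable extension $\kk(t)$. With that in place, no step presents a real obstacle.
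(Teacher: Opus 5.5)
Your proposal is correct and follows essentially the same argument as the paper. For $\kk[s]g_s\del \subseteq L(s)$ you write $g_s\del = (s'g_s)\del_s$ with $s'g_s \in \kk[s]$. For the reverse inclusion you compare $g\del = \frac{f(s)}{s'}\del$ to get $s'g \in \kk[s]$ and then invoke Lemma~\ref{lem:g_s}. Your separate treatment of constant $s$ is a small extra that the paper leaves implicit.
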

\begin{proof}
    Since $s \in \kk[t,t^{-1}]$ and $g_s \in \kk[t,t^{-1}]$, we certainly have $\kk[s]g_s\del \subseteq W$. Write $s'g_s = h(s)$, where $h(s) \in \kk[s]$. Then
    $$g_s\del = \frac{h(s)}{s'}\del = h(s)\del_s \in \Der(\kk[s]),$$
    where we identify $\del_s = \frac{1}{s'}\del$. It is now clear that $\kk[s]g_s\del = \kk[s]h(s)\del_s \subseteq \Der(\kk[s])$, from which it follows that $\kk[s]g_s\del \subseteq L(s)$.

    Therefore, it remains to show that $L(s) \subseteq \kk[s]g_s\del_s$. To that end, let $v \in L(s) = W \cap \Der(\kk[s])$. Then $v = g\del$ for some $g \in \kk[t,t^{-1}]$ (since $v \in W$) and $v = f(s)\del_s = \frac{f(s)}{s'}\del$ for some $f(s) \in \kk[s]$ (since $v \in \Der(\kk[s])$). It follows that $s'g = f(s) \in \kk[s]$, so $g \in \kk[s]g_s$ by Lemma~\ref{lem:g_s}. We conclude that $v = g\del \in \kk[s]g_s\del$, which finishes the proof.
\end{proof}

To finish this section, we show how every subalgebra of $W$ can be sandwiched between two subalgebras from the family we define below.

\begin{definition}
    Let $s,g \in \kk[t,t^{-1}]$ such that $s'g \in \kk[s]$. We define $L(s,g) \coloneqq \kk[s]g\del$.
\end{definition}

It is easy to see that $L(s,g)$ is a Lie subalgebra of $L(s)$ of finite codimension by the arguments in \cite[Lemma 4.12 and Proposition 4.13]{Buzaglo}. In complete analogy with \cite[Theorem 2.8]{BellBuzaglo}, we now show that any subalgebra of $W$ can be sandwiched between two subalgebras of the form $L(s,g)$, unless the subalgebra is contained in some Veronese of $W$.

\begin{corollary}\label{cor:sandwich}
    Let $\g$ be an infinite-dimensional subalgebra of $W$ which does not have finite codimension in any Veronese subalgebra of $W$. Let $s \in \kk[t,t^{-1}]$ such that $\g$ has finite codimension in $L(s)$ (which exists by Theorem~\ref{thm:classification up to finite codimension}). Then there exists $g \in \kk[t,t^{-1}]$ such that $s'g \in \kk[s]$ and
    $$L(s,g) \subseteq \g \subseteq L(s).$$
\end{corollary}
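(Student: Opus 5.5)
We need to find $g$ with $s'g\in\kk[s]$ and $\kk[s]g\del\subseteq\g$. The plan is to work inside $\Der(\kk[s])=\kk[s]\del_s\cong\WW_1$ and mimic the one-sided Witt argument of \cite[Theorem 2.8]{BellBuzaglo}. Identify $L(s)$ with $\kk[s]h(s)\del_s$, where $h(s)=s'g_s$ as in Proposition~\ref{prop:L(s)}; then $\g$ becomes a finite-codimensional subalgebra of $\kk[s]\del_s$. It then suffices to find a nonzero polynomial $q(s)\in\kk[s]$ with $\kk[s]q(s)h(s)\del_s\subseteq\g$, since then $g\coloneqq q(s)g_s$ satisfies $s'g=q(s)h(s)\in\kk[s]$ and $L(s,g)=\kk[s]q(s)g_s\del=\kk[s]q(s)h(s)\del_s\subseteq\g$.

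The key step is to produce such a $q$. Consider the set
$$I\coloneqq\{f\in\kk[s] \mid \kk[s]f\del_s\subseteq\g\}.$$
This set is an ideal of $\kk[s]$, so it suffices to show $I\neq0$. Once we have some nonzero $f$ with $\kk[s]f\del_s\subseteq\g$, we can take $q\coloneqq f$. Indeed, $\kk[s]fh\del_s\subseteq\kk[s]f\del_s\subseteq\g$, so $q(s)h(s)$ also lies in $I$.

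To see that $I\neq0$, use that $\g$ has finite codimension in $\kk[s]\del_s$ (it has finite codimension in $L(s)$, which has finite codimension in $\Der(\kk[s])$). Then \cite[Theorem 2.8]{BellBuzaglo} (or \cite[Corollary 4.1]{BellBuzaglo} together with the sandwich statement there), applied to the copy $\kk[s]\del_s\cong\WW_1$, gives directly a nonzero polynomial $f$ with $\kk[s]f\del_s\subseteq\g$. If we prefer a self-contained argument, we can instead use the Grabowski-type construction. The largest $\kk[s]$-submodule contained in $\g$ is $\{v : \kk[s]v\subseteq\g\}$. Since $\g$ is a finite-codimensional subspace of $\kk[s]\del_s$, we consider the kernel of the map $\kk[s]\to\mathrm{Hom}(\kk[s]\del_s/\g,\ \cdot\ )$ using the bracket identity $[a\del_s,b\,v]=a b'v+b[a\del_s,v]$. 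This shows that the annihilator ideal of the finite-dimensional quotient under multiplication by derivatives of elements $a\in\kk[s]$ with $a\del_s\in\g$ is nonzero. This is the standard argument proving that finite-codimensional subalgebras of $\WW_1$ contain $f\WW_1$.

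The main obstacle is exactly this step, namely showing that a finite-codimensional subalgebra of $\kk[s]\del_s$ contains a nonzero submodule-subalgebra. I would rely on the $\WW_1$ result of \cite{BellBuzaglo}, transported along the isomorphism $\kk[s]\del_s\cong\WW_1$, $s\mapsto t$. This isomorphism preserves the module structure ($\kk[s]$-submodules correspond to $\kk[t]$-submodules), so the containment $\kk[s]f\del_s\subseteq\g$ transfers back. The Veronese hypothesis is used only to guarantee, via Theorem~\ref{thm:classification up to finite codimension}, that we are in the $L(s)$ case with $|\widetilde{D}\setminus D|=1$.
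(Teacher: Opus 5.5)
Your proposal is correct and is essentially the paper's argument. The paper simply defers to \cite[Lemma 1.9]{BellBuzaglo}, i.e.\ the fact that a finite-codimensional subalgebra of $\Der(\kk[s]) \cong \WW_1$ contains a nonzero submodule-subalgebra $\kk[s]f\del_s$. Your choice $g \coloneqq f(s)g_s$ correctly turns this into $L(s,g) \subseteq \g$ with $g \in \kk[t,t^{-1}]$ and $s'g = f(s)h(s) \in \kk[s]$.

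Your ``self-contained'' Grabowski-type sketch is too vague to serve as a proof on its own, but it is not needed, since the cited $\WW_1$ result suffices. If you cite \cite[Theorem 2.8]{BellBuzaglo} instead, add the one-line observation that finite codimension in $\kk[s]\del_s$ forces the subring in that sandwich to be all of $\kk[s]$; that is what makes the lower bound a $\kk[s]$-submodule.
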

\begin{proof}
    Similar to \cite[Lemma 1.9]{BellBuzaglo}.
\end{proof}

\begin{remark}
    If $\g$ has finite codimension in $\Ver_n$ (where $n$ is a positive integer), then we still get a similar result to Corollary~\ref{cor:sandwich}: in this case, we can apply the isomorphism $\varphi \colon \Ver_n \to W$ defined by $h(t^n) t\del \mapsto n h(t) t\del$ for $h(t) \in \kk[t]$. Then $\varphi(\g)$ is a finite-codimensional subalgebra of $W$, so we can apply \cite[Proposition 3.2.7]{PetukhovSierra} to deduce that there exist $f \in \kk[t,t^{-1}]$ and $k \in \NN$ such that
    $$f^k W \subseteq \varphi(\g) \subseteq f W,$$
    and thus
    $$f(t^n)^k \Ver_n \subseteq \g \subseteq f(t^n) \Ver_n.$$
\end{remark}

We finish with a question.

\begin{question}
    Can we get a similar explicit description of subalgebras of $\L_C$ for an arbitrary smooth curve $C$?
\end{question}


\begin{thebibliography}{BCC{\etalchar{+}}25}

\bibitem[AS74]{AmayoStewart}
Ralph~K. Amayo and Ian Stewart, \emph{Infinite-dimensional {L}ie algebras}, Noordhoff International Publishing, Leyden, 1974.

\bibitem[BB25a]{BellBuzaglo2}
Jason Bell and Lucas Buzaglo, \emph{Enveloping algebras of derivations of commutative and noncommutative algebras}, Int. Math. Res. Not. IMRN (2025), no.~17, Paper No. rnaf265, 11 pp.

\bibitem[BB25b]{BellBuzaglo}
\bysame, \emph{Maximal dimensional subalgebras of general {C}artan-type {L}ie algebras}, Bull. Lond. Math. Soc. \textbf{57} (2025), no.~2, 605--624.

\bibitem[BCC{\etalchar{+}}25]{BagchiChakrabortyChakraborttyFredenhagenGrumillerPandit}
Arjun Bagchi, Pronoy Chakraborty, Shankhadeep Chakrabortty, Stefan Fredenhagen, Daniel Grumiller, and Priyadarshini Pandit, \emph{Boundary {C}arrollian conformal field theories and open null strings}, Phys. Rev. Lett. \textbf{134} (2025), no.~7, Paper No. 071604, 8 pp.

\bibitem[BHP{\etalchar{+}}26]{BCCA}
Lucas Buzaglo, Xiao He, Tuan~Anh Pham, Haijun Tan, Girish~S. Vishwa, and Kaiming Zhao, \emph{On the boundary {C}arrollian conformal algebra}, 2026, to appear in Lett. Math. Phys.

\bibitem[Buz23]{Buzaglo}
Lucas Buzaglo, \emph{Enveloping algebras of {K}richever-{N}ovikov algebras are not {N}oetherian}, Algebr. Represent. Theory \textbf{26} (2023), no.~5, 2085--2111.

\bibitem[CGLZ17]{CoxGuoLuZhao}
Ben Cox, Xiangqian Guo, Rencai Lu, and Kaiming Zhao, \emph{Simple superelliptic {L}ie algebras}, Commun. Contemp. Math. \textbf{19} (2017), no.~3, Paper No. 1650032, 22 pp.

\bibitem[Du04]{Du}
Hong Du, \emph{Endomorphisms of {L}ie algebra {$F[t]\frac d{dt}$}}, J. Syst. Sci. Complex. \textbf{17} (2004), no.~1, 143--146.

\bibitem[Fia83]{Fialowski}
Alice Fialowski, \emph{On the classification of graded {L}ie algebras with two generators}, Vestnik Moskov. Univ. Ser. I Mat. Mekh. (1983), no.~2, 62--64.

\bibitem[Gra79]{Grabowski}
J.~Grabowski, \emph{Isomorphisms and ideals of the {L}ie algebras of vector fields}, Invent. Math. \textbf{50} (1978/79), no.~1, 13--33.

\bibitem[GS64]{GuilleminSternberg}
Victor~W. Guillemin and Shlomo Sternberg, \emph{An algebraic model of transitive differential geometry}, Bull. Amer. Math. Soc. \textbf{70} (1964), 16--47.

\bibitem[KN87]{KricheverNovikov}
Igor~Moiseevich Krichever and S.~P. Novikov, \emph{Algebras of {V}irasoro type, {R}iemann surfaces and strings in {M}inkowski space}, Funktsional. Anal. i Prilozhen. \textbf{21} (1987), no.~4, 47--61, Paper No. 96.

\bibitem[Mat86a]{Mathieu2}
Olivier Mathieu, \emph{Classification des alg\`ebres de {L}ie gradu\'{e}es simples de croissance {$\leq 1$}}, Invent. Math. \textbf{86} (1986), no.~2, 371--426.

\bibitem[Mat86b]{Mathieu1}
\bysame, \emph{Sur un probl\`eme de {V}. {G}. {K}ac: la classification de certaines alg\`ebres de {L}ie gradu\'{e}es simples}, J. Algebra \textbf{102} (1986), no.~2, 505--536.

\bibitem[Mat92]{Mathieu3}
\bysame, \emph{Classification of simple graded {L}ie algebras of finite growth}, Invent. Math. \textbf{108} (1992), no.~3, 455--519.

\bibitem[Mat26]{Mathieu}
\bysame, \emph{Weakly noetherian {L}ie algebra and the {S}ierra-{W}alton conjecture}, 2026, arXiv:\href{https://arxiv.org/abs/2605.18116}{\texttt{2605.18116}}.

\bibitem[MR01]{McConnellRobson}
J.~C. McConnell and J.~C. Robson, \emph{Noncommutative {N}oetherian rings}, revised ed., Graduate Studies in Mathematics, vol.~30, American Mathematical Society, Providence, RI, 2001, With the cooperation of L. W. Small.

\bibitem[PS23]{PetukhovSierra}
Alexey~V. Petukhov and Susan~J. Sierra, \emph{The {P}oisson spectrum of the symmetric algebra of the {V}irasoro algebra}, Compos. Math. \textbf{159} (2023), no.~5, 933--984.

\bibitem[Sch93]{SchlichenmaierDegenerations}
Martin Schlichenmaier, \emph{Degenerations of generalized {K}richever-{N}ovikov algebras on tori}, J. Math. Phys. \textbf{34} (1993), no.~8, 3809--3824.

\bibitem[Sch14]{Schlichenmaier}
\bysame, \emph{Krichever-{N}ovikov type algebras}, De Gruyter Studies in Mathematics, vol.~53, De Gruyter, Berlin, 2014, Theory and applications.

\bibitem[Sie96]{Siebert}
Thomas Siebert, \emph{Lie algebras of derivations and affine algebraic geometry over fields of characteristic {$0$}}, Math. Ann. \textbf{305} (1996), no.~2, 271--286.

\bibitem[Sil09]{Silverman}
Joseph~H. Silverman, \emph{The arithmetic of elliptic curves}, second ed., Graduate Texts in Mathematics, vol. 106, Springer, Dordrecht, 2009.

\bibitem[SW14]{SierraWalton}
Susan~J. Sierra and Chelsea Walton, \emph{The universal enveloping algebra of the {W}itt algebra is not noetherian}, Adv. Math. \textbf{262} (2014), 239--260.

\end{thebibliography}
\end{document}